\ifpdf  \usepackage[pdftex,bookmarks=false]{hyperref}
\else   \usepackage[dvips,bookmarks=false]{hyperref}
\definecolor{darkgreen}{rgb}{0,0.45,0}
\numberwithin{equation}{section}
\newtheorem{defn}{Definition}[section]
\newtheorem{lem}[defn]{Lemma}
\newtheorem{prop}[defn]{Proposition}
\newtheorem{cor}[defn]{Corollary}
\newtheorem{thm}[defn]{Theorem}
\newtheorem*{thm*}{Theorem}
\theoremstyle{remark}
\newtheorem{rmk}[defn]{Remark}
\newtheorem{eg}[defn]{Example}
\crefname{eg}{Example}{Examples}
\newcommand{\define}[1]{\textbf{\boldmath{#1}}}
\newcommand{\defeq}{\vcentcolon\equiv}  %
\def\noteson{\gdef\note##1{\marginpar{\raggedright\color{blue}$\cdot$##1}}}
\newcommand{\loopspacesym}{\mathsf{\Omega}}
\newcommand{\loopspace}[1]{\loopspacesym(#1)}
\newcommand{\suspsym}{\mathsf{\Sigma}}
\newcommand{\susp}[1]{\suspsym #1}
\newcommand{\fib}[2]{{\mathsf{fib}}_{#1}(#2)}
\newcommand{\LLL}{\mathcal{L}}
\newcommand{\RRR}{\mathcal{R}}
\newcommand{\pto}{\to_{\ast}}
\newcommand{\degg}{\mathsf{deg}}
\newcommand{\sphere}[1]{\mathbb{S}^{#1}}
\newcommand{\base}{\ensuremath{\mathsf{base}}\xspace}
\newcommand{\lloop}{\ensuremath{\mathsf{loop}}\xspace}
\newcommand{\UU}{\mathcal{U}}
\newcommand{\Prop}{\mathsf{Prop}}
\newcommand{\refl}[1]{\ensuremath{\mathsf{refl}_{#1}}\xspace}
\newcommand{\transfib}[3]{\ensuremath{\mathsf{transport}^{#1}(#2,#3)\xspace}}
\newcommand{\emptyt}{\ensuremath{\mathbf{0}}\xspace}
\newcommand{\unit}{\ensuremath{\mathbf{1}}\xspace}
\newcommand{\colim}{\mathsf{colim}}
\newcommand{\Z}{\mathbb{Z}}
\newcommand{\N}{\mathbb{N}}
\newcommand{\Q}{\mathbb{Q}}
\newcommand{\blank}{\mathord{\hspace{1pt}\text{--}\hspace{1pt}}}
\DeclareMathOperator{\im}{im}
\newcommand{\idfunc}[1][]{\mathsf{id}_{#1}}
\newcommand{\ttrunc}[2]{\bigl\Vert #2\bigr\Vert_{#1}}
\newcommand{\ap}[1]{\mathsf{ap}_{#1}}
\newcommand{\trans}[2]{{#1}_{*}\!\left({#2}\right)}
\newcommand{\Hom}{\mathsf{Hom}}
\newcommand{\precomp}[1]{{#1}^*}
\newcommand{\bool}{\mathsf{bool}}
\newcommand{\true}{\mathsf{true}}
\newcommand{\pr}{\mathsf{pr}}
\newcommand{\trunc}[2]{\|#2\|_{{#1}}}
\newcommand{\tproj}[2]{|#2|_{{#1}}}
\newcommand{\lra}       {\longrightarrow}
\newcommand{\longhookrightarrow}{\lhook\joinrel\lra}
\newcommand{\sto}{\!\to\!}  %
\newcommand{\ctsym}{%
  \mathchoice{\mathbin{\raisebox{0.5ex}{$\displaystyle\centerdot$}}}%
             {\mathbin{\raisebox{0.5ex}{$\centerdot$}}}%
             {\mathbin{\raisebox{0.25ex}{$\scriptstyle\,\centerdot\,$}}}%
             {\mathbin{\raisebox{0.1ex}{$\scriptscriptstyle\,\centerdot\,$}}}
  }
\newcommand{\ct}[3][]{
  \@ifnextchar\bgroup
    {#2 \mathbin{\ctsym_{#1}} \ct[#1]{#3}}
    {#2 \mathbin{\ctsym_{#1}} #3}
  }
\def\smsym{\sum}
\newcommand{\@thesum}[1]{\smsym_{(#1)}}
\newcommand{\sm}[1]{\@ifnextchar\bgroup{\@sm{#1}\sm}{\@sm{#1}}}
\newcommand{\@sm}[1]{\mathchoice{{\textstyle\@thesum{#1}}}{\@thesum{#1}}{\@thesum{#1}}{\@thesum{#1}}}
\def\prdsym{\prod}
\newcommand{\@ifnextchar@starorbrace}[2]
  {\@ifnextchar*{#1}{\@ifnextchar\bgroup{#1}{#2}}}
\newcommand{\@theprd}[1]{\prdsym_{(#1)}}
\newcommand{\@theiprd}[1]{\prdsym_{\{#1\}}}
\newcommand{\prd}{\@ifnextchar*{\@iprd}{\@prd}}
\newcommand{\@prd}[1]
  {\@ifnextchar@starorbrace
    {\@tprd{#1}\prd}
    {\@tprd{#1}}}
\newcommand{\@tprd}[1]{%
  \mathchoice{%
    {{\textstyle\@theprd{#1}}}}{\@theprd{#1}}{\@theprd{#1}}{\@theprd{#1}}}
\newcommand{\@iprd}[2]{\@ifnextchar@starorbrace%
  {\@tiprd{#2}\prd}%
  {\@tiprd{#2}}}
\newcommand{\@tiprd}[1]{
  \ifthenelse{\true}
    {\@@tiprd{#1}}%
    {\@tprd{#1}}}
\newcommand{\@@tiprd}[1]{\mathchoice{{\textstyle\@theiprd{#1}}}{\@theiprd{#1}}{\@theiprd{#1}}{\@theiprd{#1}}}
\newcommand{\eqvsym}{\simeq}    %
\newcommand{\eqv}[2]{
  \@ifnextchar\bgroup
    {#1 \eqvsym \eqv{#2}}
    {#1 \eqvsym #2}
  }
\def\lam#1{{\lambda}\@lamarg#1:\@endlamarg\@ifnextchar\bgroup{.\,\lam}{.\,}}
\def\@lamarg#1:#2\@endlamarg{\if\relax\detokenize{#2}\relax #1\else\@lamvar{\@lameatcolon#2},#1\@endlamvar\fi}
\def\@lamvar#1,#2\@endlamvar{(#2\,{:}\,#1)}
\def\@lameatcolon#1:{#1}
\def\lamu#1{{\lambda}\@lamuarg#1:\@endlamuarg\@ifnextchar\bgroup{.\,\lamu}{.\,}}
\def\@lamuarg#1:#2\@endlamuarg{#1}
\title{Localization in homotopy type theory}
\author[J.D. Christensen]{J. Daniel Christensen}
\address{University of Western Ontario, London, Ontario, Canada}
\email{jdc@uwo.ca}
\author[M. Opie]{Morgan Opie}
\address{Harvard University, Cambridge, Massachusetts, USA}
\email{opie@math.harvard.edu}
\author[E. Rijke]{Egbert Rijke}
\address{University of Illinois at Urbana-Champaign, Urbana, Illinois, USA}
\email{rijke@illinois.edu}
\author[L. Scoccola]{Luis Scoccola}
\address{University of Western Ontario, London, Ontario, Canada}
\email{lscoccol@uwo.ca}
\thanks{This material is based upon work supported by the National Science
Foundation under Grant Number DMS 1321794. Rijke gratefully acknowledges the support of the Air Force Office of Scientific Research through MURI grant FA9550-15-1-0053.
Opie is partially supported by an NSF Graduate Research Fellowship
through grant DGE-1144152.}
\thanks{Compared to the published version, this version has a correction to the
proof of \cref{Lseparatedloopspace}.}
\subjclass[2010]{55P60 (Primary), 18E35, 03B15 (Secondary)}
\date{February 9, 2020}
\begin{document}

\begin{abstract}
We study localization at a prime in homotopy type theory, using self maps of the circle.
Our main result is that for a pointed, simply connected type $X$,
the natural map $X \to X_{(p)}$ induces algebraic localizations on
all homotopy groups.
In order to prove this, we further develop the theory of reflective subuniverses.
In particular, we show that for any reflective subuniverse $L$,
the subuniverse of $L$-separated types is again a reflective subuniverse,
which we call $L'$.
Furthermore, we prove results establishing that $L'$ is almost left exact.
We next focus on localization with respect to a map,
giving results on preservation of coproducts and connectivity.
We also study how such localizations interact with other reflective subuniverses and orthogonal factorization systems.
As key steps towards proving the main theorem, we show that localization at a prime
commutes with taking loop spaces for a pointed, simply connected type,
and explicitly describe the localization of an
Eilenberg-Mac Lane space $K(G,n)$ with $G$ abelian.
We also include a partial converse to the main theorem.
\end{abstract}

\maketitle

\tableofcontents

\section{Introduction}

Problems in topology and algebra often become simpler when localized at
a prime $p$, that is, when all other primes are in a sense invertible.
In this paper, we study such localizations in homotopy type theory
and develop the necessary general theory in order to prove results
about localization at a prime.
Writing $X_{(p)}$ for the localization of a type $X$ at a prime $p$,
a special case of our main theorem (\cref{theorem:localizationlocalizes})
says:

\begin{thm*}
Let $X$ be a pointed, simply connected type and let $p$ be a prime.
Then for each $n \geq 2$, the maps $\pi_n(X) \to \pi_n(X_{(p)})$
induced by the natural map $X \to X_{(p)}$ are algebraic localizations
of abelian groups at $p$.
\end{thm*}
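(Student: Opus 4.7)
The plan is to combine the two key ingredients highlighted in the abstract: that $p$-localization commutes with loop spaces on pointed simply connected types, i.e.\ $\loopspace{X_{(p)}} \simeq \loopspace{X}_{(p)}$ whenever $X$ is simply connected; and the explicit description $K(G,n)_{(p)} \simeq K(G_{(p)},n)$ for $G$ abelian. Together with the almost left exactness properties of the subuniverse of $p$-local types developed in the paper, these should provide enough structure to carry out an induction on the Postnikov filtration of $X$.

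I would first dispatch the Eilenberg--Mac Lane case. For $X = K(G,n)$ with $n \geq 2$, the explicit computation immediately gives $\pi_n(X) \to \pi_n(X_{(p)})$ as the algebraic localization $G \to G_{(p)}$, with all other homotopy groups of $X_{(p)}$ vanishing. This serves both as a base case and as the input for the fibers in the inductive step.

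For a general pointed simply connected $X$, I would induct on $n$ using the fiber sequence $K(\pi_n X, n) \to \trunc{n}{X} \to \trunc{n-1}{X}$, whose base is simply connected because $X$ is. Assuming the theorem for $\trunc{n-1}{X}$ and for the fiber, if localization preserves this fiber sequence, then the long exact sequence of homotopy groups, combined with exactness of algebraic $p$-localization of abelian groups, closes the induction. The result then transfers from $\trunc{n}{X}$ back to $X$ because $\pi_k(X) \simeq \pi_k(\trunc{n}{X})$ for $k \leq n$ and localization commutes with $n$-truncation on simply connected types, which is itself an instance of the almost left exactness of $L'$.

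The main obstacle is showing that the Postnikov fiber sequence above remains a fiber sequence after $p$-localization. This is where I expect the almost-left-exactness results established earlier in the paper to do the heavy lifting, with the simply connectedness of the base $\trunc{n-1}{X}$ playing a crucial role---classically, fiber-sequence preservation by $p$-localization fails without sufficient connectivity in the base, so this hypothesis is essential. The loop-space commutation lemma is the key technical input here: it translates preservation of the fiber sequence into a statement about loop spaces of the fiber and the base, where the inductive tools and the explicit EM-space computation combine well. Once preservation is secured, the remainder of the argument is a routine diagram chase with long exact sequences.
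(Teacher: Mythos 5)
Your proposal follows essentially the same route as the paper's proof: localize the Postnikov fiber sequence $K(\pi_{n+1}(X),n+1)\to\trunc{n+1}{X}\to\trunc{n}{X}$ of simply connected types using the fiber-sequence preservation result (\cref{lemma:lex}) and the commutation of truncation with localization, then invoke the Eilenberg--Mac Lane computation (\cref{theorem:localizationKgn}); the paper simply reads off $\pi_{n+1}$ from the identified fiber of the localized sequence rather than running a long-exact-sequence induction. One small caveat: the commutation of localization with $n$-truncation is not a purely formal instance of the almost-lex results---it also requires that the localization of a simply connected $n$-type is $n$-truncated (\cref{lemma:locofscntrunc}), which the paper proves by exactly the kind of Postnikov induction you sketch, so your induction should explicitly carry that truncatedness claim along to avoid circularity.
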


In order to prove this theorem, it is necessary to develop a
substantial number of results, many of which are of independent interest.
As we are working in homotopy type theory, which has models%
\footnote{This depends, as usual, on the initiality conjecture for homotopy type theory.
We also assume that the universe is closed under the needed higher inductive types.}
in all $\infty$-toposes (\cite{KL}, \cite{LS}, \cite{S}),
our results hold in more general situations than the homotopy theory of spaces.
To achieve this greater generality,
all of our proofs and constructions must be homotopy invariant,
all of our arguments must be constructive (avoiding the law of excluded middle
and the axiom of choice), and we cannot make use of Whitehead's theorem,
which does not hold in this generality.
This means that many of our proofs are new, and do not follow sources
such as~\cite{MayPonto}.
Our spirit is closer to that of~\cite{DrorFarjoun}, but still our methods
are different in key places.
For example, our proof that $\loopspacesym L_{\susp{f}} X \simeq L_f \loopspacesym X$
does not follow the approach used in~\cite{DrorFarjoun}, as that argument
relies on delooping machinery that we do not have available.
Our proof instead uses a univalent universe.
Note that, based on the present paper, Marco Vergura has directly proved many
results from \cref{section:rsst} in an arbitrary $\infty$-topos (\cite{Vergura1},
\cite{Vergura2}).

\medskip

We will now explain the ingredients that lead to the main theorem.

We begin in \cref{section:rsst} by studying general reflective subuniverses,
building on the work of~\cite{RSS}.
A \define{reflective subuniverse} $L$, which we also call a \define{localization},
is a predicate specifying which types are $L$-local along with a
\define{unit} map $\eta : X \to LX$, for each $X$,
which is initial among maps whose codomain is local.
For example, given a map $f : A \to B$, we say that a type $X$ is \define{$f$-local}
if precomposition with $f$ gives an equivalence $(B \to X) \to (A \to X)$.
In~\cite{RSS} it is shown using higher inductive types that the $f$-local types
form a reflective subuniverse $L_f$.

We prove a variety of results about general reflective
subuniverses.  We highlight here our discussion of the $L$-separated types.
Given a reflective subuniverse $L$, a type $X$ is \define{$L$-separated} if
its identity types are $L$-local.
Using the join construction~\cite{joinconstruction},
we prove that the subuniverse of $L$-separated types is reflective.
Using a dependent induction for $L'$, we prove that the map $X \to L' X$ induces $L$-localization on identity types.
In particular, when $X$ is pointed, $\loopspacesym L' X \simeq L \loopspacesym X$.
Our proof relies on a result that shows that $X \to L' X$ has a
constrained dependent elimination principle.
This principle also lets us show that $L$ and $L'$ together behave
similarly to a left exact (lex) modality.  For example, we give
results on the preservation of pullbacks and fiber sequences.

In~\cref{section:localization}, we study localization with respect to a
map $f$, or more generally, with respect to a family of maps.
In the case when $L$ is $L_f$, it turns out that $L'$ is $L_{\suspsym f}$.
This allows us to apply our results about $L'$-localization to this
situation, showing for example that $\loopspacesym L_{\susp{f}} X \simeq L_f \loopspacesym X$.
We prove that under certain conditions,
\pagebreak
$L_f X \simeq L_{\suspsym f} X$, which is a key tool for work in the next section.
This section also contains a number of results about the preservation of
coproducts and $n$-connectedness under appropriate localizations.
For example, if $\trunc{n}{f}$ is an equivalence and $X$ is $n$-connected,
then $L_f X$ is $n$-connected.
This result is in fact a corollary of a more abstract result concerning
the interaction between two reflective subuniverses.
In~\cref{ss:orthogonal-factorization-systems}, we strengthen this corollary
using a theorem which says that if $(\LLL, \RRR)$ is an orthogonal factorization system and
$f$ is a map in $\LLL$, then $\eta : X \to L_f X$ is in $\LLL$ for every $X$.

In~\cref{section:localizationaway}, we specialize to the study of localizations
at and away from primes.
In homotopy type theory, algebraic invariants such as homology or homotopy groups
are not strong enough to characterize these localizations.
Instead, following~\cite{CasacubertaPeschke}, we invert a prime $q$ by localizing at
the degree $q$ map $\degg(q) : \sphere{1} \to \sphere{1}$.
More generally, we localize at a family of such maps to invert a family of primes.
In particular, we can localize \emph{at} a prime $p$ by localizing with respect to the
family of maps $\degg(q)$ for all primes $q$ different from $p$.
Our results are stated for a general family $S$ of natural numbers,
and we write $\degg(S)$ for the family of degree $k$ maps for $k$ in $S$.%
\footnote{In some parts of \cref{section:localizationaway}, we require
the family $S$ to be indexed by $\N$.}

We begin by defining what it means for a group $G$ to be uniquely $S$-divisible
and show that this holds if and only if the associated Eilenberg-Mac Lane space
$K(G,n)$ is $\degg(S)$-local (where $G$ is assumed to be abelian if $n > 1$).
We also prove that every $\degg(S)$-local type has uniquely $S$-divisible homotopy groups
and show the converse for types that are simply connected and $n$-truncated for some $n$.
(We need the assumption that the type is truncated since Whitehead's theorem
does not hold in type theory.)

Using results from~\cref{section:localization}, we see that if $X$ is $n$-connected,
its $\degg(S)$-localization agrees with its $\suspsym^n \degg(S)$-localization,
where $\suspsym^n \degg(S)$ is the family of $n$-th suspensions of the maps
in the family $\degg(S)$.
In particular, when $X$ is pointed and simply connected, we deduce that
$\loopspacesym L_{\degg(S)} X \simeq L_{\degg(S)} \loopspacesym X$.
We use this to show that for $X$ connected,
the map $X \to L_{\degg(S)} X$ induces an algebraic localization of the first
non-trivial homotopy group of $X$.
In particular, by taking $X$ to be an Eilenberg-Mac Lane space,
this implies that the algebraic localization of any group exists.

We next give an explicit description of the $\degg(S)$-localization of
an Eilenberg-\break Mac~Lane space $K(G,n)$ for $G$ abelian,
and deduce that the result is an Eilenberg-Mac~Lane space for the
localized group.
This has the indirect consequence that localizing an abelian group $G$
among non-abelian groups is the same as localizing $G$ among abelian groups.
Since we were unable to find this statement in the literature, we give an
independent, elementary proof of this in~\cref{ss:abelian}, which is not
needed elsewhere.

With all of this in place, we prove the main result: $\degg(S)$-localization of a pointed, simply connected type $X$
localizes all of the homotopy groups of $X$ away from $S$.
We also prove a partial converse.  Let $f : X \to X'$ be a pointed
map between pointed, simply connected $n$-types for some $n$.
If $f$ induces algebraic localization away from $S$ in $\pi_m$
for each $m > 1$, then $f$ is a $\degg(S)$-localization of $X$.

\medskip

We rely heavily on~\cite{RSS}.  That paper focuses on
modalities, and we show in~\cref{ss:localizationofgroups} that our
motivating examples are not modalities.
This is why we spend time on general reflective subuniverses before
specializing to $\degg(S)$-localization in the final section.
The constrained dependent elimination principle for $L'$-localization
shows that $L'$ is close to being a modality, which turns out to be sufficient
for our purposes.
We expect that these general results will find other applications as well.

\medskip

Those familiar with the classical story will notice some omissions.
First, we have restricted to simply connected types in many places,
where the classical approach applies to nilpotent spaces.
Secondly, we do not prove fracture theorems which reconstruct a type
from its localizations.
Nilpotent types and fracture squares in homotopy type theory are developed in~\cite{scoccola}.
This is a natural point to mention that very little of the present
work has been formalized in a proof assistant, but that we hope to
address this in the future.%
\footnote{Since this paper was published, Mike Shulman formalized many
of the results from \cref{section:rsst} in the HoTT Library:
\url{https://github.com/HoTT/HoTT}}
 
\medskip

\noindent
\textbf{Acknowledgements:}
We thank Marco Vergura and Alexandra Yarosh for helpful conversations about this project
and for comments during the writing of this paper.
We thank the referee for many suggestions that improved the paper.
We thank the American Mathematical Society 
for running the Mathematics Research Communities Program
in June, 2017, at which this work began,
and the National Science Foundation for supporting the MRC program.

\subsection{Notation and conventions}

We primarily follow the notation used in~\cite{hottbook}.
For example, we write $\emptyt$ for the empty type and $\unit$ for the unit type.

We also make use of standard background from~\cite{hottbook}, such as the notion of
a pointed type $(X, x_0)$ and its loop space $\loopspacesym (X, x_0) \defeq (x_0 = x_0)$,
which is written $\loopspacesym X$ when the basepoint can be suppressed.
A pointed map from $(X, x_0)$ to $(Y, y_0)$ is a map $f:X \to Y$ and a path $p: f(x_0) = y_0$.
We write $X \pto Y$ for the pointed type of pointed maps from $(X,x_0)$ to $(Y,y_0)$,
leaving the base points implicit.
A pointed map $f$ induces a pointed map $\loopspacesym X \to \loopspacesym Y$ in
a natural way.

We assume the existence of higher inductive types, which are used in~\cite{RSS}
to construct the localization with respect to a map.
We also work with the circle $\sphere{1}$ defined as a higher inductive type
with one point constructor $\base : \sphere{1}$ and one path constructor
$\lloop : \base = \base$.
Mapping from $\sphere{1}$ to a type $X$ is the same as providing an element $x : X$
and a loop $p : x = x$.
It also follows from the existence of higher inductive types that truncations exist.

We assume univalence and, in particular, function extensionality.
We write $\UU$ for a univalent universe, and assume that all free type variables lie in $\UU$.
In a few places we will deal with types not in $\UU$. In those cases a \define{small type} will
mean a type in $\UU$, and a \define{locally small type} will mean a type whose identity types
are equivalent to small types.

\section{Reflective subuniverses and their separated types}\label{section:rsst}

In this section, we develop the general theory of reflective subuniverses, drawing on~\cite{RSS} and emphasizing those properties that are necessary in what follows. 

We begin in \cref{subsection:reflectivesubuniverses} with definitions and preliminary observations. While we later specialize to $\degg(k)$-localization, working in greater generality clarifies the structure of many of the arguments. For example, other reflective subuniverses, such as the subuniverse of $n$-truncated types, naturally arise as we investigate $\degg(k)$-local types.

In \cref{ss:separated-types}, we focus on the separated types of a given reflective subuniverse $L$. These are the types whose identity types are in the subuniverse. In the case of localization with respect to a map $f$, the separated types are precisely the $\Sigma f$-local types. Many of the results that we will need for $\Sigma f$-localization can be phrased as more general results on $L$-separated types, and we prove them as such. 

Write $L'$ for the subuniverse of $L$-separated types.
\cref{ss:constructionofseparated} contains a proof that $L$-separated types form a reflective subuniverse. This is not necessary for our later results since, in the case of localization with respect to a map $f$, $L_f'$-local types and $L_{\Sigma f}$-local types coincide. However, this result may be of use to the reader interested in more general localizations.

In \cref{ss:lex}, we show that $L$ and $L'$ together behave similarly to a lex modality.
In particular, we characterize the identity types of $L'$-localizations and
give results about the preservation of pullbacks and fiber sequences.

\subsection{Reflective subuniverses}\label{subsection:reflectivesubuniverses}

In this section, we develop background on reflective subuniverses, building on~\cite{RSS}. Our investigation of localization with respect to families of maps, carried out in \cref{section:localization}, fits into this general framework.

\begin{defn}
    A \define{subuniverse} $L$ is a family $\mathsf{isLocal_L}:\UU \to\Prop$.
    Given $X : \UU$, if the type $\mathsf{isLocal_L}(X)$ is inhabited 
    we say that $X$ is \define{$L$-local}.
    We write $\UU_L \defeq \sm{X:\UU} \mathsf{isLocal_L}(X)$ for the subuniverse of
    $L$-local types.
\end{defn}

\begin{eg}\label{example:localizationatmaps}
    For any $n\geq -2$, being $n$-connected is a mere proposition, so the class of $n$-connected types
    forms a subuniverse.
\end{eg}

\begin{defn}
Given a subuniverse $L$ and a type $X$, an \define{$L$-localization} of $X$ consists of an $L$-local type
$X'$ and a map $g : X \to X'$ such that for every $L$-local type $Y$ the map
\[
  \precomp{g} : (X'\to Y) \lra (X\to Y)
\]
is an equivalence. We call this last fact \define{the universal property of $L$-localization}.
\end{defn}

A straightforward application of the universal property and the univalence axiom
shows that localizations are unique when they exist:

\begin{lem}[{\cite[Lemma 1.17]{RSS}}]
Given a subuniverse $L$, the type of $L$-localizations of $X$ is a mere proposition.\qed
\end{lem}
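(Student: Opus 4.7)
The plan is to show that the type of $L$-localizations of $X$, namely
\[
\sm{X':\UU} \mathsf{isLocal_L}(X') \times \sm{g:X\to X'} \prd{Y:\UU_L} \mathsf{isEquiv}(\precomp{g}),
\]
is a mere proposition by constructing, from any two inhabitants $(X_1',g_1)$ and $(X_2',g_2)$, an identification between them. Note that $\mathsf{isLocal_L}(X')$ is a proposition by definition of a subuniverse, and being an equivalence is a proposition, so only the pair $(X',g)$ carries data that needs to be matched.

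First, I would use the universal property of $g_1$ applied to the local type $X_2'$ to obtain a unique $\phi : X_1' \to X_2'$ with $\phi \circ g_1 = g_2$, and symmetrically the universal property of $g_2$ applied to $X_1'$ to obtain a unique $\psi : X_2' \to X_1'$ with $\psi \circ g_2 = g_1$. Next I would show that $\psi \circ \phi$ and $\idfunc[X_1']$ both become $g_1$ after precomposition with $g_1$; since $\precomp{g_1} : (X_1' \to X_1') \to (X \to X_1')$ is an equivalence by the universal property applied to $Y \defeq X_1'$, the map is injective on functions and hence $\psi \circ \phi = \idfunc[X_1']$. The other composite is handled symmetrically, so $\phi$ is an equivalence.

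Applying univalence to $\phi$ gives a path $p : X_1' = X_2'$ in $\UU$. Transporting the pair $(g_1, \ldots)$ along $p$ sends $g_1$ to a map homotopic (and hence, by function extensionality, equal) to $g_2$; the components $\mathsf{isLocal_L}(X_i')$ and the family of $\mathsf{isEquiv}$ proofs agree automatically because they live in propositions. Packaging these identifications via the usual characterization of identity types in $\Sigma$-types produces the required identification of the two localizations. There is no real obstacle here, only bookkeeping: the only subtle step is making sure that the identification of $g_1$ with $g_2$ obtained from the universal property and the one obtained from transporting along $p$ are compatible, which follows directly from how $p$ was defined from $\phi$ via univalence.
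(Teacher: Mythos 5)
Your proof is correct and is exactly the argument the paper has in mind: it cites~\cite[Lemma 1.17]{RSS} and notes that uniqueness is a straightforward application of the universal property and univalence, which is precisely what you carry out (maps both ways from the universal property, the uniqueness of extensions to show they are inverse, univalence plus the characterization of paths in $\Sigma$-types, with the remaining components handled as mere propositions). No gaps worth flagging.
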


\begin{defn}
A \define{reflective subuniverse} $L$ consists of a subuniverse $\mathsf{isLocal_L}:\UU\to\Prop$,
a \define{reflector} $L:\UU\to\UU_L$
and a \define{unit}
\[
  \eta : \prd{X:\UU}X \to LX
\]
such that for every $X : \UU$, the
map $\eta_X : X \to LX$ is an $L$-localization of $X$.
\end{defn}

\begin{eg}\label{example:subuniversemaps}
Many examples of reflective subuniverses are obtained by \emph{localizing} at a family of maps $f:\prd{i:I}A_i\to B_i$.
In this context, a type $X$ is $f$-local if the map $\precomp{f_i} : (B_i \to X) \to (A_i \to X)$ is an equivalence
for all $i : I$.
By~\cite[Theorem 2.18]{RSS}, the $f$-local types form a reflective subuniverse which we denote by $L_f$.
Examples of this include $n$-truncation for any $n \geq -2$ and $\degg(k)$-localization.
We specialize to $L_f$ in \cref{section:localization} and specialize further
to inverting natural numbers in \cref{section:localizationaway}.
\end{eg}

In the rest of this section, $L$ will denote an arbitrary reflective subuniverse.
We recall the basic properties of reflective subuniverses from~\cite{RSS} and~\cite[Section~7.7]{hottbook}.
First, two reflective subuniverses with the same local types necessarily have the same reflector and the same unit.
This means that being reflective is a mere property of a subuniverse.
Moreover, a type $X$ is local if and only if the unit $\eta_X : X \to LX$ is an equivalence.
The reflector $L$ is automatically functorial in the sense that for any
$g : X \to Y$, there is a unique map, denoted $Lg : LX \to LY$, together with a homotopy making the following square commute
\[
        \begin{tikzpicture}
          \matrix (m) [matrix of math nodes,row sep=2em,column sep=3em,minimum width=2em]
          { X & L X \\
            Y & L Y . \\};
          \path[->]
            (m-1-1) edge node [above] {$\eta$} (m-1-2)
                    edge node [left] {$g$} (m-2-1)
            (m-2-1) edge node [above] {$\eta$} (m-2-2)
            (m-1-2) edge [dashed] node [right]{$L g$} (m-2-2)
            ;
        \end{tikzpicture}
\]

Any reflective subuniverse contains the unit type and is closed under
pullbacks, products, identity types, dependent products over any type,
sequential limits and other
limits that can be defined in homotopy type theory~\cite{AKL}.
Note, however, that reflective subuniverses are not closed under dependent sums in general,
even if the indexing type is in the reflective subuniverse.
(See \cref{example:nonlocalfib2,example:nonlocalfib}.)

The universal property of $L$-localization can be regarded as a recursion principle.
From this point of view, it turns out that a reflective subuniverse has an induction principle (dependent elimination)
precisely when it is closed under dependent sums:

\begin{thm}[{\cite[Theorem 1.32]{RSS}}]\label{modality}
The following are equivalent:
\begin{enumerate}
\item For any local type $X$ and any family $P:X \to \UU_L$ of local types, the dependent pair type $\sm{x:X} P(x)$ is again local.
\item For any type $X$ and any family $P:LX\to\UU_L$, the precomposition map
\[
  \precomp{\eta_X}: \Big( \prd{l:LX} P(l) \Big) \lra \Big( \prd{x:X} P(\eta(x)) \Big)
\]
is an equivalence.  %
\item For any type $X$, the unit $\eta_X: X \to LX$ is \define{$L$-connected}, i.e., for any $l:LX$, the localization $L(\fib{\eta_X}{l})$ is contractible.
\end{enumerate}
If any of these equivalent conditions hold, we say that $L$ is a \define{modality}.\qed
\end{thm}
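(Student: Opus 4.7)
The plan is to prove the cycle of implications $(1)\Rightarrow(2)\Rightarrow(3)\Rightarrow(1)$, handling $(1)\Leftrightarrow(2)$ with a total-space argument and $(2)\Leftrightarrow(3)$ via the standard characterization of $L$-connected maps by dependent extension.

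For $(1)\Rightarrow(2)$, given a family $P:LX\to\UU_L$, form the total space $E\defeq\sm{l:LX}P(l)$, which is $L$-local by $(1)$ since $LX$ and each $P(l)$ are local. A section $s:\prd{x:X}P(\eta(x))$ determines a map $\sigma:X\to E$, sending $x\mapsto(\eta(x),s(x))$, which by the universal property of $\eta_X$ extends uniquely to $\bar\sigma:LX\to E$. The two maps $\pr_1\circ\bar\sigma$ and $\idfunc[LX]$ from $LX$ to $LX$ both restrict to $\eta_X$ along $X\to LX$, so by the uniqueness half of the universal property they agree, making $\bar\sigma$ a dependent section of $P$. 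Promoting this right inverse to a full equivalence of dependent function types is done by the analogous argument applied to identity types, which is legitimate since $L$ is closed under them.

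For $(2)\Leftrightarrow(3)$, I would invoke (or reprove) the standard lemma that for any $f:A\to B$, the fiber localizations $L(\fib{f}{b})$ are contractible for every $b:B$ if and only if precomposition by $f$ yields an equivalence $(\prd{b:B}Q(b))\to(\prd{a:A}Q(f(a)))$ for every $L$-local family $Q:B\to\UU_L$. Applied to $f=\eta_X$, the first condition is exactly $(3)$ and the second is $(2)$. For the forward direction $(2)\Rightarrow(3)$, one applies $(2)$ to the $L$-local family $P(l)\defeq L(\fib{\eta_X}{l})$ with the canonical section $x\mapsto|(x,\refl{\eta(x)})|$ on $X$, producing a term $c:\prd{l:LX}L(\fib{\eta_X}{l})$ that serves as the proposed center of contraction in each fiber localization.

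Finally, for $(2)\Rightarrow(1)$, which closes the cycle, let $X$ be local and $P:X\to\UU_L$, and set $E\defeq\sm{x:X}P(x)$. Extend $\pr_1:E\to X$ along $\eta_E$ (using that $X$ is local) to obtain $\pi:LE\to X$. Applying $(2)$ to the local family $P\circ\pi$ over $LE$ produces a dependent section $\tau:\prd{l:LE}P(\pi(l))$, and then $l\mapsto(\pi(l),\tau(l))$ is an inverse of $\eta_E$, witnessing that $E$ is local. The main obstacle is the uniqueness half of the characterization lemma underlying $(2)\Leftrightarrow(3)$: producing the center of contraction $c$ is essentially immediate, but proving that each $L(\fib{\eta_X}{l})$ is actually contractible requires a careful double induction---first along $\eta_X$ to reduce to the case $l=\eta(x)$, then along the localization unit of the fiber against a suitably chosen local family---and this step is the most delicate piece of the argument.
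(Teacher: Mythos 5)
The paper does not actually prove this theorem: it cites \cite[Theorem 1.32]{RSS}, and only proves the restricted instance \cref{theorem:generalized-induction} (statement (2) for a single family whose total space is known to be local), which it does by the commuting square of precomposition equivalences and passing to fibers of the vertical maps $\pr_1\circ\blank$. Your architecture --- (1)$\Rightarrow$(2), (2)$\Rightarrow$(1), (2)$\Rightarrow$(3), (3)$\Rightarrow$(2) --- is logically complete, and your (2)$\Rightarrow$(1) and (3)$\Rightarrow$(2) arguments are the standard ones and are fine. Your (1)$\Rightarrow$(2) takes a genuinely different route from the paper's square trick: you totalize, extend along $\eta_X$, and then upgrade the resulting section to an equivalence via the identity-type families. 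This works, but it silently relies on the ``path-split'' criterion (a map with a section all of whose $\mathsf{ap}$'s have sections is an equivalence) and on transport coherences that the square argument of \cref{theorem:generalized-induction} packages automatically; since the hypothesis of that proposition (``the total space is local'') is exactly what (1) provides, you could simply reuse it and get the full equivalence in one step.

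The one point needing repair is the ``standard lemma'' you invoke for (2)$\Leftrightarrow$(3). For a general reflective subuniverse only its forward direction (fibers $L$-connected $\Rightarrow$ dependent elimination into local families) is available for an arbitrary map $f$ --- that gives (3)$\Rightarrow$(2). The backward direction for a single map is not an off-the-shelf fact here; it is essentially the content of (2)$\Rightarrow$(3), because contracting $L(\fib{\eta_X}{l})$ requires eliminating along the unit of the fiber \emph{itself}, which is licensed only because (2) is assumed for all types, not just for the one map $\eta_X$. Your sketch does gesture at the right fix (``then along the localization unit of the fiber''), but to close it you must make it explicit: the ``suitably chosen local family'' is $z \mapsto (c(l) = z)$ over $L(\fib{\eta_X}{l})$, which is local because identity types of local types are local; having reduced to points of the fiber, you still must produce $c(l) = \eta(x',p)$ for an arbitrary $(x',p)$ with $p : \eta_X(x') = l$, and this uses the propositional computation rule $c(\eta_X(x')) = \eta(x',\refl{\eta_X(x')})$ supplied by (2), together with based path induction on $p$ (equivalently, transport along $p$ and $\mathsf{apd}_c$). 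As written, that final identification is the missing detail; once supplied, your proof goes through.
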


Although an arbitrary reflective subuniverse need not have dependent elimination into
families of local types (property (2), above), we observe that if property (1) holds for a particular type family over $LX$,
then property (2) holds for that type family. This gives a restricted version of the dependent elimination principle,
which we will use several times in what follows to circumvent the fact that $\degg(k)$-localization is not a modality.

\begin{prop}\label{theorem:generalized-induction}
Consider a type $X$ and a type family $P:LX \to \UU$ such that the total space $\sm{l:LX} P(l)$ is local.
Then the precomposition map
\[
  \precomp{\eta_X} : \Big(\prd{l:LX}P(l)\Big) \lra \Big(\prd{x:X}P(\eta_X(x))\Big)
\]
is an equivalence.
\end{prop}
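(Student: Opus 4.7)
The plan is to reduce the statement to the universal property of $L$-localization applied to the local type $\sm{l:LX}P(l)$, and then extract the desired equivalence by taking fibers of a compatible pair of precomposition maps.

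First, I would reformulate both sides of the claimed equivalence as types of sections of the first projection $\pr_1 : \sm{l:LX}P(l) \to LX$. Specifically, there is an equivalence
\[
  \Big( \prd{l:LX} P(l) \Big) \eqvsym \sm{g : LX \to \sm{l:LX}P(l)}{\pr_1 \circ g = \idfunc{LX}},
\]
sending $f$ to the pair $(l \mapsto (l, f(l)), \refl{})$, and similarly
\[
  \Big( \prd{x:X} P(\eta_X(x)) \Big) \eqvsym \sm{h : X \to \sm{l:LX}P(l)}{\pr_1 \circ h = \eta_X}.
\]
Under these two equivalences, the precomposition map $\precomp{\eta_X}$ corresponds to the map sending $(g, \alpha)$ to $(g \circ \eta_X, \alpha \ct \refl{})$ (after suitable identification).

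Next, since $\sm{l:LX} P(l)$ is $L$-local by hypothesis, the universal property of $\eta_X$ gives that
\[
  \precomp{\eta_X} : \Big(LX \to \sm{l:LX}P(l)\Big) \lra \Big(X \to \sm{l:LX}P(l)\Big)
\]
is an equivalence. Similarly, because $LX$ is local, $\precomp{\eta_X} : (LX \to LX) \to (X \to LX)$ is an equivalence. These two precomposition maps fit into a commutative square with the "postcomposition with $\pr_1$" maps as vertical arrows. Under the bottom equivalence, $\idfunc{LX}$ corresponds to $\eta_X$.

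Finally, since equivalences preserve fibers, taking fibers of the vertical maps over $\idfunc{LX}$ on the left and over $\eta_X$ on the right yields the desired equivalence between $\prd{l:LX}P(l)$ and $\prd{x:X}P(\eta_X(x))$, and inspecting the construction shows the resulting map is precisely $\precomp{\eta_X}$. The main subtlety is purely bookkeeping: identifying the induced map on fibers with the precomposition map on dependent functions, which requires unfolding the two section-type equivalences and chasing the commutative square carefully. No deep new idea is needed — the proposition is essentially the observation that the $(1) \Rightarrow (2)$ implication in \cref{modality} is local in the type family $P$, so the modality hypothesis is only used where it is actually needed.
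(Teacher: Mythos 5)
Your proposal is correct and is essentially the paper's own argument: both use the commuting square of precomposition maps $(LX\to \sm{l:LX}P(l)) \to (X\to \sm{l:LX}P(l))$ and $(LX\to LX)\to(X\to LX)$, with the horizontal maps equivalences by locality, and then identify the fibers of the vertical $\pr_1\circ\blank$ maps over $\idfunc[LX]$ and $\eta_X$ with the two dependent function types. The only difference is that you spell out the section-type bookkeeping more explicitly than the paper does.
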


This follows from the proof that (1) implies (2) in~\cite[Theorem~1.32]{RSS}, but we include
a proof here for completeness.

\begin{proof}
Since $LX$ is local and $\sm{l:LX}P(l)$ is local, the precomposition maps $\precomp{\eta_X}$ in the commuting square
\[
  \begin{tikzcd}
    (LX\to \sm{l:LX}P(l)) \arrow[r,"\precomp{\eta_X}"] \arrow[d,swap,"\pr_1\circ\blank"] & (X\to \sm{l:LX}P(l)) \arrow[d,"\pr_1\circ\blank"] \\
    (LX\to LX) \arrow[r,swap,"\precomp{\eta_X}"] & (X\to LX)
  \end{tikzcd}
\]
are equivalences. It follows that they induce an equivalence from the fiber of the left-hand map $\pr_1\circ\blank$ at $\idfunc[LX]$ to the fiber of the right-hand map $\pr_1\circ\blank$ at $\eta_X$. In other words, we have an equivalence
\[
  \precomp{\eta_X} : \Big(\prd{l:LX}P(l)\Big) \lra \Big(\prd{x:X}P(\eta_X(x))\Big).\qedhere
\]
\end{proof}

\medskip
As one might expect, maps that become equivalences after $L$-localization
will become relevant later.
We call such a map an \define{$L$-equivalence}.

\begin{lem}\label{lemma:characterizationorthogonal}
For a map $g : X \to Y$, the following are equivalent:
\begin{enumerate}
\item $g$ is an $L$-equivalence.
\item For any local type $Z$, the precomposition map
\[
  \precomp{g} : (Y \to Z) \lra (X \to Z)
\]
is an equivalence.
\end{enumerate}
\end{lem}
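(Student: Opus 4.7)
The plan is to prove each implication directly from the universal property of $L$-localization together with the naturality of the unit.

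For (1) $\Rightarrow$ (2), I would invoke naturality of $\eta$ at $g$, which gives a commuting square
\[
  \begin{tikzcd}
    (LY \to Z) \arrow[r, "\precomp{\eta_Y}"] \arrow[d, swap, "\precomp{(Lg)}"] & (Y \to Z) \arrow[d, "\precomp{g}"] \\
    (LX \to Z) \arrow[r, swap, "\precomp{\eta_X}"] & (X \to Z)
  \end{tikzcd}
\]
for any $L$-local type $Z$. The two horizontal maps are equivalences by the universal property of $L$-localization applied to $\eta_X$ and $\eta_Y$, and the left vertical map is an equivalence by the hypothesis that $Lg$ is an equivalence. Hence, by 2-out-of-3, so is the right vertical map $\precomp{g}$.

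For (2) $\Rightarrow$ (1), I would show that $\eta_Y \circ g : X \to LY$ exhibits $LY$ as an $L$-localization of $X$. The type $LY$ is $L$-local, and for any $L$-local type $Z$ the precomposition map $\precomp{(\eta_Y \circ g)}$ factors as the composite
\[
  (LY \to Z) \xrightarrow{\precomp{\eta_Y}} (Y \to Z) \xrightarrow{\precomp{g}} (X \to Z),
\]
where the first map is an equivalence by the universal property of $\eta_Y$ and the second by assumption (2). Hence the composite is an equivalence, so $\eta_Y \circ g$ is an $L$-localization of $X$. By uniqueness of $L$-localizations, the unique map $\phi : LX \to LY$ with $\phi \circ \eta_X \htpy \eta_Y \circ g$ must be an equivalence. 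Since $Lg$ is by definition this unique map, $Lg$ is an equivalence, which establishes (1).

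I do not anticipate any real obstacle: both directions are immediate applications of the universal property and the uniqueness lemma recalled earlier. The only mild subtlety is keeping straight that $Lg$ is characterized up to homotopy by the square $Lg \circ \eta_X \htpy \eta_Y \circ g$, so that identifying $\eta_Y \circ g$ as a localization of $X$ forces $Lg$ itself (not merely some abstract comparison map) to be an equivalence.
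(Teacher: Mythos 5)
Your proof is correct. The direction (1) $\Rightarrow$ (2) is the same argument as the paper's: the naturality square you write down is exactly the paper's factorization of $\precomp{g}$ as $(Y \to Z) \simeq (LY \to Z) \xrightarrow{\precomp{(Lg)}} (LX \to Z) \simeq (X \to Z)$, and 2-out-of-3 finishes it. For (2) $\Rightarrow$ (1) you take a slightly different route: you show that $\eta_Y \circ g$ is itself an $L$-localization of $X$ and then invoke the uniqueness of localizations (the mere-proposition lemma recalled earlier in the paper), together with the observation—which you rightly flag—that $Lg$ is characterized as the unique map with $Lg \circ \eta_X \htpy \eta_Y \circ g$, so the resulting equivalence really is $Lg$. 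The paper instead reuses the same factorization, now specializing $Z$ to $LX$ and $LY$: this shows $\precomp{(Lg)} : (LY \to Z) \to (LX \to Z)$ is an equivalence for those two choices of $Z$, from which an inverse to $Lg$ is extracted by a Yoneda-style argument. Both arguments are short and sound; yours leans on the uniqueness lemma (and implicitly univalence, via how that uniqueness is proved), while the paper's stays entirely inside the universal property and is marginally more self-contained, though it leaves the final extraction of the inverse to the reader.
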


This implies in particular that the unit $\eta : X \to L X$ is an $L$-equivalence for any type $X$.

\begin{proof} First we show that (1) implies (2).
    Let $Z$ be $L$-local. From the square used to define $Lg$,
    we can factor the map $(Y \to Z) \xrightarrow{\ \precomp{g}} (X \to Z)$ as
    \[
        (Y \to Z) \simeq (L Y \to Z) \xrightarrow{\precomp{(L g)}} (L X \to  Z) \simeq (X \to Z).
    \]
 Hence if $L g$ is an equivalence, then $\precomp{g} : (Y \to Z) \to (X \to Z)$ is an equivalence.
 
    Conversely, assume that $\precomp{g} : (Y \to Z) \to (X \to Z)$ is an equivalence for every $L$-local type $Z$.
    Then, using the same factorization and choosing $Z$ to be $L X$ and $L Y$, we deduce that $L g$ must be an equivalence.
\end{proof}

\cref{lemma:characterizationorthogonal} also implies that $L$-equivalences are closed under transfinite composition.
We make use of the notion of sequential colimit from \cite[Section 3.1]{Brunerie}.

\begin{lem}\label{lemma:orthogonalcomposition}
    If the maps in a sequential diagram
    \[X_0 \xrightarrow{\,h_0\,} X_1 \xrightarrow{\,h_1\,} X_2 \xrightarrow{\,h_2\,} \cdots\]
    are $L$-equivalences, then the transfinite composite $\overline{h} : X_0 \to \colim_n X_n$ is an $L$-equiva\-lence.
\end{lem}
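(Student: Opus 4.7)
The plan is to reduce the claim to a statement about precomposition into $L$-local types using \cref{lemma:characterizationorthogonal}, and then to exploit the universal property of sequential colimits to convert a tower of equivalences into a single equivalence.

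More precisely, by \cref{lemma:characterizationorthogonal} it suffices to show that, for every $L$-local type $Z$, the precomposition map
\[
  \precomp{\overline{h}} : (\colim_n X_n \to Z) \lra (X_0 \to Z)
\]
is an equivalence. First I would invoke the universal property of the sequential colimit (as set up in \cite[Section~3.1]{Brunerie}) to identify $(\colim_n X_n \to Z)$ with the sequential limit of the tower
\[
  (X_0 \to Z) \xleftarrow{\precomp{h_0}} (X_1 \to Z) \xleftarrow{\precomp{h_1}} (X_2 \to Z) \xleftarrow{\precomp{h_2}} \cdots ,
\]
in such a way that the projection onto $(X_0 \to Z)$ corresponds to $\precomp{\overline{h}}$ under this identification.

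By hypothesis, each $h_n$ is an $L$-equivalence, so by the direction of \cref{lemma:characterizationorthogonal} already established, each precomposition map $\precomp{h_n} : (X_{n+1} \to Z) \to (X_n \to Z)$ is an equivalence. Thus the tower above consists entirely of equivalences, and its sequential limit is equivalent to $(X_0 \to Z)$ via the canonical projection. Composing these two equivalences gives that $\precomp{\overline{h}}$ is an equivalence, as required.

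The main obstacle I anticipate is step two: carefully stating and using the universal property of the sequential colimit so that the resulting identification of $(\colim_n X_n \to Z)$ with the limit of the mapping tower is compatible, up to homotopy, with the projection onto $(X_0 \to Z)$ and with $\precomp{\overline{h}}$. This is a routine but somewhat fiddly diagram chase; once set up, the fact that a sequential limit of equivalences projects by an equivalence onto its first term follows by an easy induction or by recognizing the limit as a sequential composition of equivalences. Everything else is a direct appeal to the lemma cited above.
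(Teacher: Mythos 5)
Your proposal is correct and follows essentially the same route as the paper: reduce via \cref{lemma:characterizationorthogonal} to precomposition into an arbitrary $L$-local $Z$, use the universal property of the sequential colimit to identify $(\colim_n X_n \to Z)$ with the limit of the mapping tower, and observe that the transition maps are equivalences so the projection to $(X_0 \to Z)$ is one too. The compatibility of this identification with $\precomp{\overline{h}}$, which you flag as the fiddly point, is exactly the factorization the paper takes for granted from the induction principle of the sequential colimit.
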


\begin{proof}
    By \cref{lemma:characterizationorthogonal}, it is enough to check that
    $\precomp{\overline{h}} : (\colim_n X_n \to Z) \to (X_0 \to Z)$
    is an equivalence for every $L$-local type $Z$.
    By the induction principle of the sequential colimit,
    we can factor $\precomp{\overline{h}}$ as
    \[
        (\colim_n X_n \to Z) \xrightarrow{\;\sim\;} \lim_n (X_n \to Z) \lra (X_0 \to Z),
    \]
    and by hypothesis the transition maps $(X_{n+1} \to Z) \to (X_n \to Z)$ in the limit diagram are equivalences,
    so the second map is an equivalence as was to be shown.
\end{proof}

In \cref{section:localizationaway}, we will be interested in the effect of localization on the homotopy groups of a type.
For this, we need to understand the relationship between localization and truncation.
Since truncations are also examples of reflections onto reflective subuniverses,
the following two general lemmas will be useful.

The first lemma is a straightforward generalization of~\cite[Theorem~3.30]{RSS}.

\begin{lem}\label{lemma:commutelocalization}
    Let $K$ and $L$ be reflective subuniverses, and let $X$ be a type.
    If $KLX$ is $L$-local and $LKX$ is $K$-local, then $LKX = KLX$.\qed
\end{lem}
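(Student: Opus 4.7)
The plan is to exhibit both $KLX$ and $LKX$ as localizations of $X$ with respect to the subuniverse $\UU_{K\cap L}$ consisting of types that are simultaneously $K$-local and $L$-local, and then invoke uniqueness of localizations together with univalence.

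First I would observe that both $KLX$ and $LKX$ lie in $\UU_{K\cap L}$: the type $KLX$ is $K$-local by construction and $L$-local by hypothesis, while $LKX$ is $L$-local by construction and $K$-local by hypothesis. This gives two natural maps out of $X$ into types in $\UU_{K\cap L}$, namely the composites $\eta^K_{LX}\circ\eta^L_X : X \to LX \to KLX$ and $\eta^L_{KX}\circ\eta^K_X : X \to KX \to LKX$.

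Next I would verify the universal property for each of these. Given any $Z$ that is both $K$-local and $L$-local, the precomposition with $\eta^K_{LX}\circ\eta^L_X$ factors as
\[
  (KLX\to Z)\xrightarrow{\;\precomp{\eta^K_{LX}}\;}(LX\to Z)\xrightarrow{\;\precomp{\eta^L_{X}}\;}(X\to Z).
\]
The first arrow is an equivalence because $Z$ is $K$-local and $\eta^K_{LX}$ is a $K$-localization, and the second is an equivalence because $Z$ is $L$-local and $\eta^L_{X}$ is an $L$-localization. Hence the composite is an equivalence, showing that $X \to KLX$ is a localization of $X$ with respect to $\UU_{K\cap L}$. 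The same argument with the roles of $K$ and $L$ swapped shows that $X\to LKX$ is also such a localization.

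Finally, I would apply the uniqueness result for localizations (\cite[Lemma~1.17]{RSS}, cited just before \cref{example:subuniversemaps}) applied to the subuniverse $\UU_{K\cap L}$ to conclude that the two localizations agree, yielding an equivalence $KLX \simeq LKX$ compatible with the maps from $X$. By univalence this equivalence gives the desired identification $LKX = KLX$. There is no real obstacle here: the only mild subtlety is noting that the intersection of two subuniverses is again a subuniverse (since the conjunction of two propositions is a proposition), so that the general uniqueness lemma applies.
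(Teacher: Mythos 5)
Your proof is correct. The paper gives no proof of this lemma (it is cited as a straightforward generalization of Theorem~3.30 of RSS), and your argument---exhibiting both $KLX$ and $LKX$ as localizations of $X$ at the subuniverse of types that are simultaneously $K$-local and $L$-local via the two-step factorization of the precomposition maps, and then invoking uniqueness of localizations plus univalence---is exactly the intended standard argument, with the right subtleties addressed (in particular, that the intersection is a subuniverse and that the uniqueness lemma applies to arbitrary subuniverses, not only reflective ones).
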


\begin{lem}\label{lemma:comparelocalization}
    Let $K$ and $L$ be reflective subuniverses with $K$ contained in $L$.
    Write $\eta^K$ and $\eta^L$ for the units.
    \begin{enumerate}
    \item If $f$ is an $L$-equivalence, then it is a $K$-equivalence.
          In particular, for any $X$, $K\eta^L_X : KX \to KLX$ is an equivalence.
    \item If $X$ is $K$-local, then $\eta^L_X : X \to LX$ is an equivalence.
          In particular, for any $X$, $\eta^L_{KX} : KX \to LKX$ is an equivalence.
    \item If $X$ is a type such that $LX$ is $K$-local, then the 
          natural map $LX \to KX$ is an equivalence.
    \end{enumerate}
\end{lem}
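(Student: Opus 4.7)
The plan is to reduce each of the three parts to direct applications of the hypothesis that every $K$-local type is $L$-local, combined with the universal properties of the two localizations. I expect no substantive obstacle; parts (1) and (2) are one-line consequences, and part (3) requires a short diagram chase using uniqueness.

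For part (1), I would invoke \cref{lemma:characterizationorthogonal}: if $f$ is an $L$-equivalence, then $\precomp{f} : (B\to Z) \to (A\to Z)$ is an equivalence for every $L$-local $Z$. Since every $K$-local type is by hypothesis $L$-local, this in particular holds for every $K$-local $Z$, and a second application of \cref{lemma:characterizationorthogonal} concludes that $f$ is a $K$-equivalence. The remark following \cref{lemma:characterizationorthogonal} tells us that $\eta^L_X$ is itself an $L$-equivalence, so applying what we just proved shows $\eta^L_X$ is a $K$-equivalence, i.e.\ $K\eta^L_X : KX \to KLX$ is an equivalence.

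For part (2), $K$-local implies $L$-local by hypothesis, and the standard fact recalled in the text is that a type is $L$-local exactly when its unit is an equivalence; this handles the main claim. The ``in particular'' is just this fact applied to $KX$, which is $K$-local by construction.

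For part (3), I would first construct the natural comparison map. Since $KX$ is $K$-local, it is $L$-local by hypothesis, so the universal property of the $L$-localization applied to $\eta^K_X : X \to KX$ produces a unique $\phi : LX \to KX$ with $\phi \circ \eta^L_X = \eta^K_X$; this is the map in the statement. Under the assumption that $LX$ is $K$-local, the universal property of the $K$-localization applied to $\eta^L_X : X \to LX$ produces a map $\psi : KX \to LX$ with $\psi \circ \eta^K_X = \eta^L_X$. Then
\[
  \phi \circ \psi \circ \eta^K_X = \phi \circ \eta^L_X = \eta^K_X,
\]
so by uniqueness in the universal property of $\eta^K_X$ (applied to the $K$-local target $KX$), we get $\phi\circ\psi = \idfunc[KX]$. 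Symmetrically,
\[
  \psi \circ \phi \circ \eta^L_X = \psi \circ \eta^K_X = \eta^L_X,
\]
and uniqueness for $\eta^L_X$ (with $L$-local target $LX$) gives $\psi\circ\phi = \idfunc[LX]$. Hence $\phi$ is an equivalence.
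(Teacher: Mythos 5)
Your proposal is correct and follows the same route as the paper: (1) via \cref{lemma:characterizationorthogonal}, (2) from the fact that a type is $L$-local exactly when its unit is an equivalence, and (3) by universal properties. For (3) the paper simply notes that $\eta^L_X$ has the universal property of a $K$-localization and appeals to uniqueness of localizations; your explicit construction of mutually inverse maps is just that uniqueness argument unfolded.
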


\begin{proof}
(1) follows from \cref{lemma:characterizationorthogonal}, and (2) is clear.
For (3), one checks that the unit $\eta^L_X : X \to LX$ has the universal
property of $K$-localization.
\end{proof}

\subsection{The subuniverse of separated types}\label{ss:separated-types}

We next investigate the types whose identity types are $L$-local.
We call these the $L$-separated types and denote the subuniverse by $L'$.
We show that the universe of $L$-local types is $L$-separated, up to size
issues, and this is sufficient to extend families of $L$-local types over
$L'$-localizations, an important tool in our work.
We finish with a constrained dependent elimination rule.

\begin{defn}
Let $L$ be a reflective subuniverse and let $X : \UU$ be a type. 
We say that $X$ is \define{$L$-separated} if its identity types are $L$-local types.
We write $L'$ for the subuniverse of $L$-separated types.
\end{defn}

In other words, a type $X$ is $L$-separated if its diagonal $\Delta:X\to X\times X$ is classified by $\UU_L$.

\begin{eg}\label{example:truncationisseparated}
Given $n \geq -2$, the subuniverse of $(n+1)$-truncated types is precisely the subuniverse of separated types for the reflective subuniverse of $n$-truncated types.
\end{eg}

More generally, for any family of maps $f$,
the separated types for the subuniverse of $f$-local types
are also characterized in a simple way: 

\begin{lem}\label{lemma:characterizationsigmaflocal}
    Let $f:\prd{i:I}A_i\to B_i$ be a family of maps. Denote the family consisting of the suspensions
    of the functions by $\susp{f} : \prd{i:I} \susp{A_i} \to \susp{B_i}$.
    Then, a type $X$ is $\suspsym f$-local if and only if it is $f$-separated.
    In other words, $L_{\susp{f}} = (L_{\!f})'$.
\end{lem}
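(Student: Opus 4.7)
The plan is to reduce $\Sigma f$-locality of $X$ to $f$-locality of all identity types of $X$ by using the universal property of the suspension. Recall that for any type $A$ and any $X$, maps $\Sigma A \to X$ are classified by pairs of points together with a path-valued homotopy; concretely, one has a canonical equivalence
\[
  (\suspsym A \to X) \;\simeq\; \sm{x_0 : X}\sm{x_1 : X} (A \to x_0 = x_1),
\]
obtained by sending a map $\suspsym A \to X$ to the pair of images of the two point constructors of $\suspsym A$ together with the induced family of paths coming from the path constructor. This is a straightforward consequence of the induction principle for the suspension.

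Next, I would check that this equivalence is natural in $A$, so that for each $i : I$ the precomposition map $\precomp{(\suspsym f_i)} : (\suspsym B_i \to X) \to (\suspsym A_i \to X)$ fits into a commutative square whose vertical maps are the above equivalences and whose bottom map is
\[
  \sm{x_0,x_1 : X} (B_i \to x_0 = x_1) \;\lra\; \sm{x_0,x_1 : X} (A_i \to x_0 = x_1),
\]
given fiberwise over $\sm{x_0,x_1:X}\unit$ by precomposition with $f_i$. Hence $\precomp{(\suspsym f_i)}$ is an equivalence if and only if, for every $x_0, x_1 : X$, the precomposition map $(B_i \to x_0 = x_1) \to (A_i \to x_0 = x_1)$ is an equivalence, i.e., each identity type $x_0 = x_1$ is $f_i$-local.

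Putting this together across all $i : I$: $X$ is $\suspsym f$-local iff, for every $i$ and every $x_0, x_1 : X$, the type $x_0 = x_1$ is $f_i$-local iff every identity type of $X$ is $L_f$-local iff $X$ is $f$-separated. This proves $L_{\susp{f}} = (L_f)'$. The only subtle step is setting up the naturality square so that one can promote a pointwise (in $i$, $x_0$, $x_1$) equivalence to the claim about $\suspsym f$-locality; once the universal property of $\suspsym A$ is unpacked correctly this is essentially bookkeeping, with no delicate homotopy coherence issues because identity types between fixed endpoints already live one dimension up.
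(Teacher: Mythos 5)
Your proposal is correct and follows essentially the same route as the paper: the equivalence $(\suspsym A \to X) \simeq \sm{x,y:X}(A \to x = y)$ from suspension induction, naturality in $A$ giving a commuting square over precomposition with $f_i$, and the fiberwise criterion over $X \times X$ to conclude that $\suspsym f$-locality is equivalent to $f$-locality of all identity types. No substantive differences or gaps.
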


\begin{proof}
    By the induction principle for suspension and naturality, we obtain for each $i : I$ a commutative square
\[
  \begin{tikzcd}
    (\susp{B_i} \to X) \arrow[r,"\simeq"] \arrow[d] & \left( \sm{x,y:X} (B_i \to x = y) \right) \arrow[d] \\
    (\susp{A_i} \to X) \arrow[r,"\simeq"] & \left( \sm{x,y:X} (A_i \to x = y) \right)
  \end{tikzcd}
\]
in which the horizontal maps are equivalences.
So $X$ is $\suspsym f$-local if and only if the right vertical map is an equivalence
for every $i : I$, if and only if for each $x,y : X$, the type $x = y$ is $f_i$-local
for every $i : I$.
\end{proof}

Notice that in this case the subuniverse of separated types is reflective
since it is again localization with respect to a family of maps~\cite[Theorem~2.18]{RSS}.
This holds more generally: we will prove in \cref{ss:constructionofseparated} that the subuniverse
of separated types is always reflective.
In the remainder of this section, we give results that will be used for that proof
as well as in later parts of the paper.

\begin{rmk}
The following are true for any reflective subuniverse $L$.
\begin{enumerate}
\item Since $L$-local types are closed under pullback, it follows that all $L$-local types are $L$-separated.
\item Since pullbacks commute with identity types, it follows that $L$-separated types are closed under pullbacks. Similarly, $L$-separated types are closed under retracts, subtypes, and dependent products of families of $L$-separated types indexed by an arbitrary type.
\item Since the unit type is $L$-local, it follows that every mere proposition is $L$-separated.
\item If $L$ is closed under dependent sums, then $L'$ is also closed under dependent sums,
    by the characterization of identity types of dependent sums~\cite[Theorem~2.7.2]{hottbook}.
    So, given that separated types form a reflective subuniverse, it will follow that if $L$ is a modality,
    then so is $L'$.
\end{enumerate}
\end{rmk}

\begin{lem}\label{lemma:etasurjective}
    The unit of an $L'$-localization $\eta' : X \to L'X$ is surjective.
\end{lem}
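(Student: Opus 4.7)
The plan is to realize $L'X$ as the image of its own unit, combining the standard surjection-embedding factorization with the universal property of $\eta'$.

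First I would form the image factorization
\[
X \xrightarrow{\,s\,} I \xrightarrow{\,e\,} L'X,
\]
where $I \defeq \sm{y : L'X} \trunc{-1}{\fib{\eta'}{y}}$, the map $s$ sends $x$ to $(\eta'(x), \tproj{-1}{(x, \refl{\eta'(x)})})$, and $e$ is the first projection. By construction $s$ is surjective and $e$ is an embedding. By the preceding remark, $L$-separated types are closed under subtypes, so $I$ inherits $L$-separatedness from $L'X$ and is therefore $L'$-local.

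Next, I would apply the universal property of the $L'$-localization $\eta'$ to the map $s : X \to I$, producing a unique $\bar s : L'X \to I$ with $\bar s \circ \eta' = s$. The composite $e \circ \bar s : L'X \to L'X$ then satisfies $(e \circ \bar s) \circ \eta' = e \circ s = \eta'$, so by the uniqueness clause of the universal property applied with target $L'X$ itself, $e \circ \bar s = \idfunc[L'X]$. In other words, $\bar s$ is a section of the embedding $e$. Given any $y : L'X$, writing $\bar s(y) = (y', q)$ with $q : \trunc{-1}{\fib{\eta'}{y'}}$, the identification $y = e(\bar s(y)) = y'$ transports $q$ to an element of $\trunc{-1}{\fib{\eta'}{y}}$, which is exactly the surjectivity of $\eta'$ at $y$.

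The only real thing to verify carefully is that $I$ is $L'$-local, and that is immediate from the closure of $L$-separated types under embeddings noted in the remark. Beyond that, the argument is the standard trick that a reflection unit must factor through any subobject of its target containing its image; image factorizations are constructively available via $(-1)$-truncation, and the universal property of $\eta'$ is used once for the existence of $\bar s$ and once for its uniqueness. So I expect no genuine obstacle here.
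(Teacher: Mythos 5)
Your proof is correct and is essentially the paper's own argument: factor $\eta'$ through its image, note that the image is $L$-separated because subtypes of $L$-separated types are $L$-separated, and use the universal property (existence plus uniqueness) of $\eta'$ to produce a section of the inclusion, which gives surjectivity. No gaps; you have simply spelled out the "easily seen" step in more detail than the paper does.
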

\begin{proof}
By definition, a type is $L$-separated if and only if its identity types are $L$-local.
So any subtype of an $L$-separated type is again $L$-separated.
It follows that the image of $\eta'$ is $L$-separated, and thus we have an extension
\[
  \begin{tikzcd}
    X \arrow[r,"\tilde{\eta'}"] \arrow[d,swap,"\eta'"] & \im(\eta') \\
    L'X \arrow[ur,dashed]
  \end{tikzcd}
\]
which is easily seen to be a section of the inclusion $\im(\eta')\hookrightarrow L' X$.
Therefore, $\eta'$ is surjective.
\end{proof}

\begin{prop}\label{prop:UU_L-is-L-separated}
The identity types of the subuniverse $\UU_L$ are equivalent to $L$-local types.
\end{prop}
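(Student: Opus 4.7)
The plan is to reduce the claim to showing that $X \simeq Y$ is $L$-local whenever $X$ and $Y$ are $L$-local, and then to exhibit $X \simeq Y$ as a retract of a pullback of $L$-local types.

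For the reduction: given $(X, p), (Y, q) : \UU_L$, paths in the $\Sigma$-type $\UU_L$ unfold as
\[
  ((X, p) = (Y, q)) \simeq \sm{\alpha : X = Y} (\trans{\alpha}{p} = q).
\]
The second component is a path in the mere proposition $\mathsf{isLocal_L}(Y)$, hence contractible, so this identity type is equivalent to $X = Y$ in $\UU$, which by univalence is equivalent to $X \simeq Y$.

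For the main step, I would \emph{not} try to argue directly from the expression $X \simeq Y \simeq \sm{f : X \to Y} \mathsf{isEquiv}(f)$: although $X \to Y$ is $L$-local and $\mathsf{isEquiv}(f)$ can be shown to be an $L$-local mere proposition, reflective subuniverses are not closed under $\Sigma$-types in general. Instead, consider the type of quasi-invertible data
\[
  Q \defeq \sm{f : X \to Y} \sm{g : Y \to X} \bigl((f \circ g = \idfunc[Y]) \times (g \circ f = \idfunc[X])\bigr).
\]
This type is equivalent to the pullback of the cospan
\[
  (X \to Y) \times (Y \to X) \lra (Y \to Y) \times (X \to X) \longleftarrow \unit,
\]
where the first map sends $(f, g)$ to $(f \circ g,\, g \circ f)$ and the second picks out $(\idfunc[Y], \idfunc[X])$. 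All three corners are $L$-local, using closure of $L$-local types under dependent products into local codomains, binary products, identity types of local types, and containment of $\unit$. Hence $Q$ is $L$-local by closure under pullbacks.

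Finally, $X \simeq Y$ is a retract of $Q$: there is a map $r : Q \to X \simeq Y$ sending $(f, g, \alpha, \beta)$ to $f$ together with the induced proof of $\mathsf{isEquiv}(f)$, and a section $s : X \simeq Y \to Q$ that extracts a quasi-inverse from any proof that $f$ is an equivalence. Because $\mathsf{isEquiv}(f)$ is a mere proposition, $r \circ s = \idfunc[X \simeq Y]$. Since $L$-local types are closed under retracts (a standard consequence of the universal property of the unit), it follows that $X \simeq Y$ is $L$-local. The creative step, and the only nontrivial one, is the passage to $Q$: by adding quasi-inverse data, we replace a problematic $\Sigma$-type with a retract of a pullback of $L$-local types, which is handled by the closure properties of reflective subuniverses collected in \cref{subsection:reflectivesubuniverses}.
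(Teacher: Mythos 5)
Your proof is correct and follows essentially the same route as the paper: reduce via univalence and the fact that $\mathsf{isLocal_L}$ is a mere proposition to showing that $X \simeq Y$ is $L$-local for local $X, Y$, and then realize that equivalence type through a pullback of mapping types between local types over $(X \sto X) \times (Y \sto Y)$. The only real difference is that the paper pulls back the type of bi-invertible data $\sm{f : X \to Y}{g,h : Y \to X}\bigl((h \circ f = \idfunc[X]) \times (f \circ g = \idfunc[Y])\bigr)$, which is already equivalent to $X \simeq Y$, whereas your quasi-inverse type $Q$ is not, so you need the additional (correct) step that local types are closed under retracts.
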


\begin{proof}
    Note that for any two $L$-local types $A$ and $B$ we have $\eqv{(A=_{\UU_L}B)}{(\eqv{A}{B})}$ by univalence and the
    fact that being $L$-local is a mere proposition.
    Now, notice that the type $\eqv{A}{B}$ is equivalent to the pullback
  \[
    \begin{tikzcd}[column sep=huge]
      (\eqv{A}{B}) \arrow[r] \arrow[d] & \unit \arrow[d,"{(\idfunc[A],\idfunc[B])}"] \\
      (A \sto B) \times (B \sto A) \times (B \sto A)
         \arrow[r,swap,"{(f,g,h) \longmapsto (hf,fg)}" yshift=-1ex] & (A \sto A) \times (B \sto B).
    \end{tikzcd}
  \]
  of $L$-local types, so it is $L$-local.
\end{proof}

The only thing that prevents $\UU_L$ from actually being $L$-separated is the fact that
$\UU_L$ is not small.
But we can still treat it as an $L$-separated type in the following sense.

\begin{lem}\label{lemma:extendtoUL}
Every type family $P : X \to \UU_L$ extends uniquely along any $L'$-localization $X \to L'X$.
That is, every map $Y \to X$ with $L$-local fibers is the pullback along $X \to L'X$ of
a unique map $Y' \to L'X$ with $L$-local fibers.
\end{lem}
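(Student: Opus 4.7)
The plan is to rephrase the statement and then appeal to the universal property of $L'$-localization in a form that handles the size issue. Via the standard correspondence between type families and fibrations --- sending $P : X \to \UU_L$ to the projection $\sm{x:X} P(x) \to X$, whose fibers are exactly the $L$-local types $P(x)$ --- the lemma is equivalent to showing that precomposition with the unit gives an equivalence
\[
  \precomp{\eta'} : (L'X \to \UU_L) \lra (X \to \UU_L).
\]
Under this correspondence, pullback of fibrations along $\eta'$ translates to precomposition of type families, so establishing this equivalence is exactly what the lemma asks for.

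The key input is \cref{prop:UU_L-is-L-separated}, which says that the identity types of $\UU_L$ are $L$-local. This is precisely what it means for $\UU_L$ to be $L$-separated, except that $\UU_L$ itself does not live in $\UU$ but in a larger universe $\UU^+$. The universal property of $L'$-localization --- that $\precomp{\eta'} : (L'X \to Z) \to (X \to Z)$ is an equivalence for every $L$-separated $Z$ --- is naturally universe-polymorphic, since the defining property of $L'X$ does not depend on the universe of its target. Applying this universe-polymorphic universal property with target $Z \defeq \UU_L$, viewed as an $L$-separated type in $\UU^+$, yields the desired equivalence.

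The main obstacle is articulating the size juggling cleanly, since $\UU_L$ is not itself small and the universal property was originally stated only for targets in $\UU$. One option is to make the universe $\UU^+$ explicit and simply invoke that the construction of $L'$ is universe-polymorphic. An alternative, avoiding large universes, is to split the claim into existence and uniqueness: for existence, one builds $P'$ by applying the universal property to the individual $L$-local types $P(x)$; for uniqueness, given two extensions $P_1', P_2'$ agreeing after $\eta'$, the pointwise equalities $P_1'(l) = P_2'(l)$ are $L$-local by \cref{prop:UU_L-is-L-separated}, and one concludes by again applying the universal property to these $L$-local identity types. Either route establishes the stated pullback correspondence.
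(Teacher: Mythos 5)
Your reformulation of the lemma as the equivalence $\precomp{\eta'} : (L'X \to \UU_L) \to (X \to \UU_L)$ is the right one, and your uniqueness argument is essentially workable (with the small repair that the identity types of $\UU_L$ are only \emph{equivalent to} small $L$-local types, and that one must use the constrained dependent elimination of \cref{theorem:generalized-induction}/\cref{proposition:inductionLseparated} for the \emph{family} $l \mapsto (P_1'(l)=P_2'(l))$ over $L'X$, not the plain universal property for a fixed target). The genuine gap is in existence. For a general reflective subuniverse $L$, the universal property of an $L'$-localization is given only with respect to $L$-separated types \emph{in $\UU$}; it quantifies over small targets, so it is not "naturally universe-polymorphic," and the assertion that it applies to the large type $\UU_L$ is precisely the content of the lemma rather than something you may invoke. (When $L$ is localization at a family of maps, the higher inductive construction does eliminate into arbitrary $f$-local types, which is exactly why the paper's remark following the lemma notes a simpler proof in that special case; but the lemma is stated for an arbitrary reflective subuniverse, where no such construction is assumed.) Your fallback, "build $P'$ by applying the universal property to the individual $L$-local types $P(x)$," is not a construction: the universal property extends maps $X \to Z$ into a single $L$-separated target $Z$, and here there is no such small target, only the family $P$.

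The missing ingredient is the join construction~\cite{joinconstruction}, which is how the paper reduces the size problem to the small universal property. Since \cref{prop:UU_L-is-L-separated} shows that $\UU_L$ is locally small, the image of $P : X \to \UU_L$ is equivalent to a small type $I$, so $P$ factors as a surjection $\hat P : X \to I$ followed by an embedding $I \hookrightarrow \UU_L$. Because $I$ is small and its identity types are equivalent to identity types of $\UU_L$, the type $I$ is genuinely $L$-separated, so the ordinary universal property of $\eta'$ extends $\hat P$ over $L'X$, and composing with the embedding yields the extension of $P$ (the paper then gets uniqueness by factoring any extension through $I$ via surjectivity of $\eta'$, whereas your uniqueness route via $L$-locality of the identity types is a fine alternative). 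Without this size-reduction step, or a large-elimination principle specific to HIT-constructed localizations, the existence half of your argument does not go through.
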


\begin{proof}
    We prove the first form of the statement.
    By \cref{prop:UU_L-is-L-separated}, the identity types of $\UU_L$ are
    equivalent to small types, i.e., $\UU_L$ is a locally small type.
    By the join construction~\cite{joinconstruction}, the image of $P$
    can be taken to be a small type $I$ in $\UU$, so there is a factorization
    of $P$ into a surjection $\hat{P} : X \to I$ followed by
    an embedding $i : I \to \UU_L$:
    \[
        \begin{tikzpicture}
          \matrix (m) [matrix of math nodes,row sep=2em,column sep=3em,minimum width=2em]
          { X & \UU_L \\
            L' X & I. \\};
          \path[->]
            (m-1-1) edge node [above] {$P$} (m-1-2)
                    edge node [left] {} (m-2-1)
                    edge node [above] {$\hat{P}$} (m-2-2)
            (m-2-2) edge node [right] {$i$} (m-1-2)
            ;
        \end{tikzpicture}
    \]
    Since the identity types of $I$ are equivalent to identity types
    of $\UU_L$, and $I$ is small, it follows that the identity types of $I$ are actually $L$-local.
    This means that $I$ is an $L$-separated type, so we can extend $\hat{P}$ to $L' X$ 
    giving us the desired extension of $P$ by composing with $i$.

    Since $X \to L'X$ is surjective (\cref{lemma:etasurjective}),
    any such extension must factor through the image $I$.
    So uniqueness follows from the universal property of $L'$-localization.
\end{proof}

\begin{rmk} 
    In the special case in which $L$ is localization with respect to a
    family of maps $f$ (see \cref{section:localization}),
    which is enough for our purposes, \cref{lemma:extendtoUL} has a simpler proof.
    As \cref{lemma:characterizationsigmaflocal} shows,
    $L'$ corresponds to localization with respect to the family $\susp{f}$
    of suspended maps.
    In this case, the localization $L'$ can be constructed using a higher inductive type
    that can eliminate into any $f$-local type, not only small ones, as \cite[Lemma 2.17]{RSS} shows, so the result follows.
\end{rmk}

\begin{lem}\label{lemma:separatedpluslocalisseparated}
If $X$ is an $L$-separated type and $P:X\to \UU_L$ is a family of $L$-local types, then the type
$\sm{x:X}P(x)$ is $L$-separated.
\end{lem}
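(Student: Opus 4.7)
The plan is to reduce the claim to the standard characterization of identity types in a dependent sum, and then recognize the resulting type as a pullback of types that are manifestly $L$-local.

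First I would take $w, w' : \sm{x:X}P(x)$, say $w \defeq (x,u)$ and $w' \defeq (x',u')$, and invoke the standard equivalence $(w = w') \simeq \sm{p : x = x'} (\trans{p}{u} = u')$ from \cite[Theorem~2.7.2]{hottbook}. The goal reduces to showing that this latter type is $L$-local.

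The key observation is that $\sm{p : x = x'} (\trans{p}{u} = u')$ is exactly the fiber at $u'$ of the map $e : (x = x') \to P(x')$ defined by $e(p) \defeq \trans{p}{u}$, and hence is the pullback of the cospan $\unit \to P(x') \leftarrow (x = x')$, where the left leg picks out $u'$ and the right leg is $e$. All three vertices of this cospan are $L$-local: $\unit$ is always $L$-local, $x = x'$ is $L$-local because $X$ is $L$-separated, and $P(x')$ is $L$-local by the hypothesis on the family $P$. Since $L$-local types are closed under pullbacks, as recalled in \cref{subsection:reflectivesubuniverses}, the fiber is $L$-local; hence $(w = w')$ is $L$-local. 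As $w$ and $w'$ were arbitrary, $\sm{x:X}P(x)$ is $L$-separated.

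There is no real obstacle here; the only mildly non-obvious move is to view the sum over $p$ as a pullback rather than as a dependent sum. Crucially, this avoids any appeal to closure of $L$-local types under $\Sigma$ (which would require $L$ to be a modality), and uses only the standard closure properties of reflective subuniverses together with the hypothesis that each fiber $P(x')$ is $L$-local (not merely $L$-separated), which is what makes the codomain $P(x')$ of $e$ available as an $L$-local vertex of the cospan.
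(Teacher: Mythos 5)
Your proof is correct and is essentially the paper's own argument: the paper also identifies $(x,p)=(y,q)$ with the pullback of the cospan $(x=y) \xrightarrow{\transfib{P}{-}{p}} P(y) \leftarrow \unit$ and concludes by closure of $L$-local types under pullbacks. Your explicit intermediate step via \cite[Theorem~2.7.2]{hottbook} is just a more spelled-out version of the same equivalence.
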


\begin{proof}
For any $(x,p)$ and $(y,q)$ in $\sm{x:X}P(x)$, the type $(x,p)=(y,q)$ is equivalent to the pullback
\[
  \begin{tikzcd}[column sep=huge]
    (x,p)=(y,q) \arrow[r] \arrow[d] & \unit \arrow[d,"q"] \\
    (x=y) \arrow[r,swap,"\transfib{P}{-}{p}"] & P(y)
  \end{tikzcd}
\]
of $L$-local types, so it is $L$-local. 
\end{proof}

\cref{lemma:separatedpluslocalisseparated} and \cref{theorem:generalized-induction}
imply that $L'$-localizations have a dependent elimination principle.

\begin{prop}\label{proposition:inductionLseparated}
Let $P:L'X \to \UU$ be a type family with $L$-local fibers.
Then precomposition with an $L'$-localization $\eta' : X \to L'X$ induces an equivalence
\[
    \prd{x : L'X} P(x) \simeq \prd{x : X} P(\eta' x). \tag*{\qed}
\]
\end{prop}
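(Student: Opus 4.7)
The plan is to apply \cref{theorem:generalized-induction} with $L'$ playing the role of the reflective subuniverse. Inspecting that proposition, its proof uses only the universal property of the unit $\eta_X$ together with the $L$-locality of the codomain $LX$ and of the total space; it does not require $L$ to be a modality, nor even that $L'$ be known globally to be a reflective subuniverse. So for a given $L'$-localization $\eta' : X \to L'X$, it suffices to verify that $L'X$ and $\sm{l:L'X}P(l)$ are $L'$-local, i.e., $L$-separated.

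The first holds by definition of an $L'$-localization. The second is precisely the content of \cref{lemma:separatedpluslocalisseparated}: since $L'X$ is $L$-separated and $P$ is a family of $L$-local types, the total space $\sm{l:L'X} P(l)$ is $L$-separated. We may therefore run the argument of \cref{theorem:generalized-induction} verbatim, with $\eta'$ in place of $\eta_X$: in the commuting square
\[
  \begin{tikzcd}
    (L'X\to \sm{l:L'X}P(l)) \arrow[r,"\precomp{\eta'}"] \arrow[d,swap,"\pr_1\circ\blank"] & (X\to \sm{l:L'X}P(l)) \arrow[d,"\pr_1\circ\blank"] \\
    (L'X\to L'X) \arrow[r,swap,"\precomp{\eta'}"] & (X\to L'X)
  \end{tikzcd}
\]
both horizontal maps are equivalences by the universal property of $\eta'$, and passing to fibers over $\idfunc[L'X]$ on the left and over $\eta'$ on the right identifies these fibers with $\prd{l:L'X}P(l)$ and $\prd{x:X}P(\eta'(x))$, and the induced map with precomposition by $\eta'$.

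There is no real obstacle here: the key conceptual point, that the restricted dependent elimination principle is a property of individual type families whose total space is local rather than of the entire subuniverse (which would require $L'$ to be a modality), is already encapsulated in \cref{theorem:generalized-induction}, while \cref{lemma:separatedpluslocalisseparated} is exactly what supplies the hypothesis in the present setting. Note that the proof does not need the reflectivity of $L'$ established in \cref{ss:constructionofseparated}; it applies to any $L'$-localization that happens to exist, such as the ones obtained in the case $L = L_f$ via \cref{lemma:characterizationsigmaflocal}.
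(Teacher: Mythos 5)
Your proof is correct and follows exactly the route the paper intends: the paper's entire justification is the sentence that \cref{lemma:separatedpluslocalisseparated} and \cref{theorem:generalized-induction} together give the result, which is precisely your argument (with the sensible extra observation that only the universal property of the given $\eta'$, not global reflectivity of $L'$, is needed).
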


\subsection{Construction of $L'$-localization}\label{ss:constructionofseparated}

We next show that for any reflective subuniverse $L$, the subuniverse
$L'$ of $L$-separated types is reflective.
The material in this section is not needed in the rest of the paper,
since we later specialize to the case where $L$ is localization with respect
to a family $f$ of maps. In this case $L' = L_{\suspsym f}$,
which is known to be reflective.
Nevertheless, the more general existence we prove in this section
is likely to be of use in other situations.
The construction we give also has the technical advantage that it
uses only non-recursive higher inductive types, rather than the
recursive higher inductive types employed in~\cite{RSS}.

We will need a general lemma that allows us to construct extensions along maps.
This follows from~\cite[Lemma~1.49]{RSS}, but we include a proof sketch since
it is more direct in this situation.

\begin{lem}\label{lem:unique_extension}
Let $A$, $B$ and $C$ be types, and
let $g:A\to B$ and $f:A\to C$ be maps for which we have a unique extension
\[
  \begin{tikzcd}
    \fib{g}{b} \arrow[r,"f\circ\pr_1"] \arrow[d] & C \\
    \unit \arrow[ur,dashed]
  \end{tikzcd}
\]
for every $b:B$.
Then $f$ extends uniquely along $g$.
\end{lem}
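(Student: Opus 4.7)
The plan is to translate both hypothesis and conclusion into contractibility statements for types of extensions, and to reduce the second to the first via the equivalence $A\simeq \sm{b:B}\fib{g}{b}$.

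For each $b:B$, the unique extension hypothesis is exactly the assertion that
\[
E(b) \;\defeq\; \sm{c:C}\prd{u:\fib{g}{b}} f(\pr_1 u) = c
\]
is contractible, since a map $\unit\to C$ picks out a single element $c:C$ and the remaining data is the homotopy between the constant map at $c$ and $f\circ\pr_1$. The conclusion that $f$ extends uniquely along $g$ is the contractibility of
\[
T \;\defeq\; \sm{\tilde f:B\to C}\prd{a:A}\tilde f(g(a)) = f(a).
\]
So it suffices to produce an equivalence $T\simeq\prd{b:B}E(b)$.

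To build this equivalence, I first invoke the standard $A\simeq \sm{b:B}\fib{g}{b}$, under which $g$ corresponds to $\pr_1$. This rewrites $\prd{a:A}\tilde f(g(a))=f(a)$ as $\prd{b:B}\prd{(a,p):\fib{g}{b}}\tilde f(b)=f(a)$. I then pull the $\sm{}$ over $\tilde f:B\to C$ past the outer $\prd{b:B}$, which is permitted because by function extensionality a map $B\to C$ is equivalent to a family of elements of $C$ indexed by $B$, and the dependent product swap is just a currying equivalence. This converts $T$ into $\prd{b:B}\sm{c:C}\prd{(a,p):\fib{g}{b}} c = f(a)$, which is definitionally $\prd{b:B}E(b)$ (up to a harmless symmetry of paths).

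By hypothesis each $E(b)$ is contractible, hence so is $\prd{b:B}E(b)$, and thus $T$ is contractible, which is precisely the desired unique extension property. The main obstacle is the bookkeeping in the equivalence $T\simeq\prd{b:B}E(b)$, especially checking that when one starts from a family $\sigma:\prd{b:B}E(b)$ and defines $\tilde f(b)\defeq\pr_1(\sigma b)$, the homotopy obtained by evaluating $\pr_2(\sigma(g(a)))$ at $(a,\refl{g(a)}):\fib{g}{g(a)}$ is a genuine inverse to the assignment in the other direction. Once $A$ is replaced by $\sm{b:B}\fib{g}{b}$, however, this is a routine chain of standard HoTT equivalences.
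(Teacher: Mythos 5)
Your proposal is correct and follows essentially the same route as the paper: the paper likewise restates the hypothesis as $\prd{b:B}\mathsf{isContr}\Big(\sm{c:C}\prd{a:A}{p:g(a)=b} f(a)=c\Big)$, takes the centers of contraction to produce $\tilde{f}$, and notes that uniqueness follows from the contraction. Your explicit equivalence between the type of extensions along $g$ and $\prd{b:B}E(b)$, obtained from $A\simeq\sm{b:B}\fib{g}{b}$ and the $\Pi\Sigma$-distributivity, simply spells out the details that the paper's proof sketch leaves implicit.
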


\begin{proof}
By assumption we have
\[
  \prd{b:B}\mathsf{isContr}\Big(\sm{c:C}\prd{a:A}{p:g(a)=b} f(a)=c\Big).
\]
The center of contraction gives us an extension
\[
  \begin{tikzcd}
    A \arrow[r,"f"] \arrow[d,swap,"g"] & C \\
    B \arrow[ur,dashed,swap,"\tilde{f}"]
  \end{tikzcd}
\]
and its uniqueness follows from the contraction.
\end{proof}

Next we give a sufficient condition for a map to be an $L'$-localization.

\begin{prop}\label{lemma:sufficientforlocalization}
Let $X$ be a type.
If $\eta': X \to X'$ is a surjective map such that for any $x,y:X$,
\[
  \mathsf{ap}_{\eta'}:(x=y) \lra (\eta'(x)=\eta'(y))
\]
is an $L$-localization, then $\eta'$ is an $L'$-localization.
\end{prop}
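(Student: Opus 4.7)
The plan is to verify the two requirements of $L'$-localization: that $X'$ is $L$-separated, and that $\eta'$ has the universal property for maps into $L$-separated types.

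For the first, fix $x', y' : X'$ and show $(x' = y')$ is $L$-local. Being $L$-local is a mere proposition, so surjectivity of $\eta'$ makes the statement $\prd{x',y':X'}\mathsf{isLocal}_L(x' = y')$ equivalent to $\prd{x,y:X}\mathsf{isLocal}_L(\eta'(x) = \eta'(y))$. The latter holds because $(\eta'(x) = \eta'(y))$ is the codomain of the $L$-localization $\ap{\eta'}$ and is therefore $L$-local.

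For the universal property, given an $L$-separated type $Y$ and a map $f : X \to Y$, I want a unique $\tilde{f} : X' \to Y$ extending $f$ along $\eta'$. I would apply \cref{lem:unique_extension} with $g = \eta'$: this reduces the task to showing, for each $x' : X'$, that
\[
E(x') \defeq \sm{c:Y} \prd{a:X}{p:\eta'(a) = x'} f(a) = c
\]
is contractible. Since contractibility is a proposition, surjectivity of $\eta'$ reduces this further to the case $x' = \eta'(x_0)$ for some $x_0 : X$. For that case the hypothesis supplies, for each $a : X$, a unique map $\phi_a : (\eta'(a) = \eta'(x_0)) \to (f(a) = f(x_0))$ extending $\ap{f}$, because the target is $L$-local (as $Y$ is $L$-separated) and $\ap{\eta'}$ is an $L$-localization. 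I take the center of $E(\eta'(x_0))$ to be $(f(x_0),\, \lam{(a,p)} \phi_a(p))$. To contract an arbitrary $(c, h)$ onto this center, the first component of the identification is $h(x_0, \mathsf{refl}) : f(x_0) = c$; after transport, the remaining obligation is the pointwise identity $\phi_a(p) \cdot h(x_0, \mathsf{refl}) = h(a, p)$ for every $(a, p)$. Fixing $a$, both sides are functions of $p$ into the $L$-local type $(f(a) = c)$, so by the uniqueness part of the universal property of the $L$-localization $\ap{\eta'}$ it suffices to verify the equation on paths of the form $p = \ap{\eta'}(q)$ for $q : a = x_0$. That special case reads $\ap{f}(q) \cdot h(x_0, \mathsf{refl}) = h(a, \ap{\eta'}(q))$ and follows immediately by path induction on $q$.

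The main technical friction lies in this contraction argument: one has to juggle transports in identity-type families and deploy the uniqueness of $L$-localization extensions at the right moment. Conceptually, however, the whole argument is driven by just three inputs — the surjectivity of $\eta'$, the $L$-localization property of $\ap{\eta'}$, and the $L$-locality of the identity types of $Y$ — and \cref{lem:unique_extension} packages them into the desired extension cleanly.
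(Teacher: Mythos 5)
Your proof is correct and follows essentially the same route as the paper: show $X'$ is $L$-separated using surjectivity, then reduce the universal property via \cref{lem:unique_extension} and surjectivity to contractibility of the extension type over points $\eta'(x_0)$, using the $L$-locality of the identity types of $Y$ and the localization property of $\ap{\eta'}$. The only difference is cosmetic: where the paper transports the contractibility statement across the precomposition equivalence and invokes contractibility of based path spaces, you exhibit the center and contraction explicitly, which is the same argument unfolded.
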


\begin{proof}
By assumption, the types $\eta'(x) = \eta'(y)$ are $L$-local for every $x, y : X$.
Since $\eta'$ is surjective, it follows that $x' = y'$ is $L$-local
for every $x', y' : X'$.
That is, $X'$ is $L$-separated.

It remains to show that $\eta'$ is universal, so assume given $f : X \to Y$
with $Y$ $L$-separated.
By \cref{lem:unique_extension}, it is enough to show that $f$ restricts to a unique constant map
on the fibers of $\eta'$. This means that we must show that
\[
  \sm{y:Y}\prd{x:X} (\eta'(x)=x') \lra (f(x)=y)
\]
is contractible for every $x':X'$.
Since this is a mere proposition, and $\eta'$ is surjective, we can assume that
$x' = \eta'(z)$.  So it is enough to show that
\[
  \sm{y:Y}\prd{x:X} (\eta'(x)=\eta'(z)) \lra (f(x)=y)
\]
is contractible for every $z:X$.
Since $Y$ is assumed to be $L$-separated and $\mathsf{ap}_{\eta'}$ is assumed to be an $L$-localization,
this type is equivalent to
\[
  \sm{y:Y}\prd{x:X} (x=z) \lra (f(x)=y)
\]
and it is easy to see that this is a contractible type by applying the contractibility of
the total space of the path fibration twice.
\end{proof}

Now we can prove the main result of this section.

\begin{thm}\label{thm:Lsep}
For any reflective subuniverse $L$, the subuniverse of $L$-separated types is again reflective.
\end{thm}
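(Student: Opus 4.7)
My strategy will be to invoke \cref{lemma:sufficientforlocalization}: for each type $X$, I will construct a surjective map $\eta' : X \to X'$ such that $\mathsf{ap}_{\eta'} : (x=y) \to (\eta'(x)=\eta'(y))$ is an $L$-localization for every $x, y : X$. By that proposition, $\eta'$ is then an $L'$-localization, and hence $L'$ is reflective.

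To build $X'$, I iterate a single non-recursive pushout that freely adds, for every pair $x, y : X_n$ and every $\alpha : L(x=_{X_n}y)$, a path from $x$ to $y$. Set $X_0 := X$ and, for each $n$, take $X_{n+1}$ to be the pushout of the span
\[
  X_n \longleftarrow \Bigl(\textstyle\sum_{x,y:X_n} L(x=_{X_n}y)\Bigr) \times \{0,1\} \longrightarrow \textstyle\sum_{x,y:X_n} L(x=_{X_n}y),
\]
where the left map sends $((x,y,\alpha),0) \mapsto x$ and $((x,y,\alpha),1) \mapsto y$, and the right map is the first projection. Set $X' := \colim_n X_n$ and let $\eta' : X \to X'$ be the canonical inclusion. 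Each transition $X_n \to X_{n+1}$ is surjective by inspection of the pushout, so $\eta'$ is surjective as a sequential colimit of surjections.

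The content then lies in the identity-type analysis. I would establish: (a) the transition $(x=_{X_n}y) \to (x=_{X_{n+1}}y)$ factors through the unit $(x=_{X_n}y) \to L(x=_{X_n}y)$ as an $L$-equivalence; and (b) every element of $L(x=_{X_n}y)$ is realized as an actual path in $X_{n+1}$, so that in the colimit the identity types become $L$-local. Iterating (a) together with \cref{lemma:orthogonalcomposition} shows that $\mathsf{ap}_{\eta'} : (x=_X y) \to (\eta'(x)=_{X'}\eta'(y))$ is an $L$-equivalence, and (b) combined with \cref{lemma:characterizationorthogonal} shows its codomain is $L$-local; thus $\mathsf{ap}_{\eta'}$ is an $L$-localization, as required.

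The main obstacle is part (a): identity types of pushouts are delicate and require a careful encode--decode or flattening argument to identify $(x=_{X_{n+1}}y)$ in terms of $(x=_{X_n}y)$ and $L(x=_{X_n}y)$. One then needs the standard description of path spaces in sequential colimits to assemble the individual steps into a global $L$-equivalence. Once this technical core is in place, the remainder of the proof reduces to a routine application of the closure properties of $L$-equivalences already established, together with \cref{lemma:sufficientforlocalization}.
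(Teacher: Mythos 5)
Your overall strategy (build a surjective map and then invoke \cref{lemma:sufficientforlocalization}) matches the paper's, but the cell-attachment construction in the middle has a genuine gap, and in fact step (a) fails for the span as written. Your pushout attaches a \emph{fresh} path for every $\alpha : L(x=_{X_n}y)$, with no $2$-cells relating the cell attached to $\eta(p)$ to the image of the old path $p$; consequently the transition map $(x=_{X_n}y)\to(x=_{X_{n+1}}y)$ does not factor through the unit $\eta:(x=_{X_n}y)\to L(x=_{X_n}y)$ at all. Even if you repair this by gluing along the unit (e.g.\ pushing out along $\sum_{x,y}(x=_{X_n}y)\to\sum_{x,y}L(x=_{X_n}y)$), a van Kampen/encode--decode analysis identifies $(x=_{X_{n+1}}y)$ as a free object of alternating composites of old paths and new cells threading through arbitrary intermediate points, not as $L(x=_{X_n}y)$; there is no argument offered (and no evident one) that the inclusion of the old identity type into this free object is an $L$-equivalence. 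That identification is the entire mathematical content of the construction, and it is exactly the part you defer.

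More fundamentally, convergence after $\omega$ steps cannot be expected at this level of generality. Identity types do commute with sequential colimits, but $L$-local types are \emph{not} closed under sequential colimits for an arbitrary reflective subuniverse: the theorem carries no smallness or accessibility hypothesis, so there is no reason that the data detecting $L$-locality of the colimiting identity type should factor through a finite stage. Your claim (b) conflates ``every element of $L(x=_{X_n}y)$ is realized by a path at stage $n+1$'' with $L$-locality of the colimit; the former does not imply the latter, and \cref{lemma:characterizationorthogonal} gives no help here since it characterizes $L$-equivalences, not locality of colimits. This obstruction is precisely why the construction of $L_f$ in~\cite{RSS} uses a recursive higher inductive type, and why the paper's proof avoids cell attachment altogether: it embeds $X$ into $X\to\UU$ via $y\mapsto L(y=x)$, whose image automatically has $L$-local identity types by \cref{prop:UU_L-is-L-separated}, resizes that image to a small type using the join construction, and only then applies \cref{lemma:sufficientforlocalization}. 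If you want to salvage an iterative construction, you would need either a smallness hypothesis on $L$ (so that a transfinite iteration of controlled length converges) or a genuinely different argument for why the identity types stabilize to $L$-local types.
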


\begin{proof}
Fix a type $X : \UU$. Let $\mathcal{Y}_L:X\to (X\to\UU)$ be given by
\[
\mathcal{Y}_L(x,y)\defeq L(y=x).
\]
We would like to define $L' X$ to be $\im(\mathcal{Y}_L)$, but this is a subtype of
$X \to \UU$, so it is not small (i.e., it does not live in $\UU$).
However, since $\UU$ is locally small, so is $X \to \UU$.
Thus the join construction~\cite{joinconstruction} implies that the image
is equivalent to a small type which we denote $L' X$.
This comes equipped with a surjective map
\[
\eta':X \lra L'X,
\]
which we take to be the unit of the reflective subuniverse.

To show that $\eta'$ is a localization, we apply \cref{lemma:sufficientforlocalization}.
First we show that $L' X$ is $L$-separated.  Since $\eta'$ is surjective
and being $L$-local is a proposition, it is enough to show that
$\eta'(x) = \eta'(y)$ is $L$-local for $x$ and $y$ in $X$.
Since $L' X$ embeds in $X \to \UU$, we have an equivalence between
$\eta'(x) = \eta'(y)$ and $(\lambda z . L(z = x)) = (\lambda z . L(z = y))$.
The latter is equivalent to $\prd{z:X} L(z = x) = L(z = y)$, which
is $L$-local by \cref{prop:UU_L-is-L-separated}.

It remains to show that the canonical map $\phi : L(x=y)\to (\eta'(x)=\eta'(y))$ is an equivalence.
For this, it suffices to show that $i \circ \phi$ is an equivalence for some embedding $i$.
Indeed, if $i$ is an embedding and $i \circ \phi$ is an equivalence,
then $i$ is surjective.  Therefore $i$ is an equivalence and hence so is $\phi$.
We proceed to construct an embedding $i$ such that $i \circ \phi$ is the identity map.
We define $i$ as the following composite:
\begin{align*}
                 \eta'(x) = \eta'(y)
&\,\simeq\,      (\lambda z . L(z = x)) = (\lambda z . L(z = y)) \\
&\,\simeq\,      \prd{z:X} L(z = x) = L(z = y) \\
&\,\simeq\,      \prd{z:X} L(z = x) \simeq L(z = y) \\
&\hookrightarrow \prd{z:X} L(z = x) \to L(z = y) \\
&\,\simeq\,      \prd{z:X} (z = x) \to L(z = y) \\
&\,\simeq\,      L(x = y) .
\end{align*}
The first equivalence is $\ap{\iota}$, where $\iota$ is the embedding
of $L'(X)$ into $X \to \UU$.
The second equivalence is function extensionality, and the third is univalence.
The embedding is the forgetful map from equivalences to maps.
The next map is composition with $\eta$, which is an equivalence
because $L(x = z)$ is $L$-local.
The last map is evaluation at $\refl{x}$, and is an equivalence
by path induction.

To show that $i \circ \phi$ is the identity, it suffices to show
that $i \circ \phi \circ \eta = \eta$, as maps $(x = y) \to L(x = y)$.
When $x \equiv y$, one can check directly that $i(\phi(\eta(\refl{x}))) =
i(\ap{\eta'}(\refl{x})) = i(\refl{\eta'(x)}) = \eta(\refl{x})$,
and so by path induction we are done.
\end{proof}

\subsection{$L'$-localization and finite limits}\label{ss:lex}

We now explain how $L$ and $L'$ together behave similarly to a \define{left exact (lex) modality},
i.e., a modality that preserves pullbacks.
Theorem~3.1 of \cite{RSS} gives 13 equivalent characterizations of a lex modality,
and it turns out that these hold for any reflective subuniverse
if the modal operator is replaced by $L$ and $L'$ in the appropriate way.
The propositions in this section show this for parts (ix), (x), (xii) and (xi)
of Theorem~3.1, respectively.
The proofs use the dependent elimination of $L'$ in a crucial way,
but do not use the specific construction of $L'$-localization, just the existence.

Note that $L'$ itself is not necessarily lex, even if $L$ is lex.
For example, $(-2)$-truncation is lex, but $(-1)$-truncation is not.

We start with a characterization of the identity types of an $L'$-localization.
This is a generalization of~\cite[Lemma~7.3.12]{hottbook}.

\begin{prop}\label{Lseparatedloopspace} %
    Given a type $X$ and points $x,y : X$, the unique map $\phi : L(x = y) \to (\eta' x = \eta' y)$ making the triangle
    \[
    \begin{tikzpicture}
      \matrix (m) [matrix of math nodes,row sep=2em,column sep=3em,minimum width=2em]
      { (x=y) & \\
        L(x = y)& (\eta' x = \eta' y) \\};
      \path[->]
        (m-1-1) edge node [left] {$\eta$} (m-2-1)
                edge node [above] {$\mathsf{ap}_{\eta'}$} (m-2-2)
        (m-2-1) edge node [above] {$\sim$} node [below] {$\phi$} (m-2-2)
        ;
    \end{tikzpicture}\vspace*{-6pt}
    \]\enlargethispage{5pt}%
    commute is an equivalence.
In particular, when $X$ is pointed,
$\loopspacesym L' X \simeq L \loopspacesym X$.
\end{prop}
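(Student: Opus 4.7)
The plan is to run an encode--decode argument for identity types, using the constrained dependent elimination of \cref{proposition:inductionLseparated} in place of the full modality induction (which is unavailable since $L$ need not be a modality). Fix $x:X$. By \cref{lemma:extendtoUL}, extend the family $y\mapsto L(x=y):X\to\UU_L$ uniquely along $\eta':X\to L'X$ to a family $C_x:L'X\to\UU_L$ satisfying $C_x(\eta'y)\simeq L(x=y)$, and write $r_x:C_x(\eta'x)$ for the element corresponding to $\eta(\refl{x})$.

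Define the encode map $\psi:\prd{v:L'X}(\eta'x=v)\to C_x(v)$ by transport, $\psi_v(p)\defeq\transfib{C_x}{p}{r_x}$. Define the decode map $\phi:\prd{v:L'X} C_x(v)\to(\eta'x=v)$ by applying \cref{proposition:inductionLseparated} to the family $v\mapsto C_x(v)\to(\eta'x=v)$: its fibers are $L$-local, because $C_x(v)$ is $L$-local, $(\eta'x=v)$ is $L$-local since $L'X$ is $L$-separated, and function types into $L$-local types are $L$-local. On $\eta'$-images we take $\phi_{\eta'y}$ to be the canonical map $\phi_{x,y}$ from the statement. Once $\phi$ and $\psi$ are shown to be mutually inverse for all $v:L'X$, specializing to $v=\eta'y$ yields the proposition.

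The check $\phi_v\circ\psi_v=\idfunc$ follows by path induction on $p:\eta'x=v$, reducing to $\phi_{\eta'x}(r_x)=\refl{\eta'x}$, which unfolds to $\ap{\eta'}(\refl{x})=\refl{\eta'x}$. For $\psi_v\circ\phi_v=\idfunc$, the statement $\prd{c:C_x(v)}\psi_v(\phi_v(c))=c$ has $L$-local values (a dependent product of identity types in the $L$-local type $C_x(v)$), so \cref{proposition:inductionLseparated} reduces it to the case $v=\eta'y$. We are then comparing two endomaps of the $L$-local type $C_x(\eta'y)\simeq L(x=y)$, so by the recursor universal property of $\eta:(x=y)\to L(x=y)$ it suffices to compare them after precomposing with $\eta$, and a final path induction on $p:x=y$ reduces everything to the reflexivity case already handled.

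The main obstacle is precisely that $L$ need not be a modality, so one cannot dependently eliminate into arbitrary families of $L$-local types; only \cref{proposition:inductionLseparated}, which demands that the total space of the family be $L$-local, is available. The encode--decode architecture therefore has to be arranged so that every use of dependent elimination targets an evidently $L$-local family, and the innermost step --- agreement of two maps into $L(x=y)$ --- must be handled by the bare recursion universal property of $L$-localization rather than by a would-be dependent elimination into identity types.
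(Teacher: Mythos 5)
Your proposal is correct and follows essentially the same route as the paper: fix $x$, extend the family $y \mapsto L(x=y)$ over $L'X$ via \cref{lemma:extendtoUL}, and use the constrained dependent elimination to verify an encode--decode equivalence based at $\eta(\refl{x})$. The only difference is in how the fiberwise equivalence is certified: the paper proves that the total space of the extended family is contractible (invoking the fundamental theorem of identity types together with \cref{proposition:inductionLseparated} and \cref{theorem:generalized-induction}), whereas you build an explicit two-sided inverse, defining decode by \cref{proposition:inductionLseparated} and handling the innermost comparison of endomaps of $L(x=y)$ by the plain recursion universal property of $\eta$ --- a repackaging of the same argument.
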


\begin{proof}
    It suffices to produce an equivalence $\phi'$ going in the other direction which
    makes the triangle commute.
    In order to define this map, we generalize the target $L(x = y)$.
    Fix $x : X$.
    By \cref{lemma:extendtoUL}, the type family $X \to \UU_L$ sending
    $y$ to $L(x = y)$ extends to $L'X$
    as follows:
\begin{equation*}
\begin{tikzcd}[column sep=huge]
X \arrow[r,"y\mapsto L(x=y)"] \arrow[d,swap,"{\eta'}"] & \UU_L \\
L'X . \arrow[ur,dashed,swap,"P"]
\end{tikzcd}
\end{equation*}
Now we can define $\phi' : (\eta' x = y') \to P(y')$ by path induction, sending
$\refl{\eta' x}$ to $\eta(\refl{x})$.
The triangle involving $\eta$ and $\mathsf{ap}_{\eta'}$ commutes by path induction.
By~\cite[Theorem~5.8.2]{hottbook}, to show that $\phi'$ is an equivalence
(condition (iii) of that theorem), it suffices to prove that
$\sm{y':L'X} P(y')$ is contractible (condition (iv)).%
\footnote{This paragraph has been corrected
and simplified compared to the published version of this paper.
Thanks to Mike Shulman for pointing out the gap.}

For the center of contraction we take $(\eta'(x),\eta(\refl{x}))$.
It remains to construct a contraction
\begin{equation*}
\prd{y':L'X}{p:P(y')}(\eta'(x),\eta(\refl{x}))=(y',p).
\end{equation*}
By \cref{lemma:separatedpluslocalisseparated}, the total space of $P$ is $L$-separated, 
so it follows that 
\[
  \prd{p:P(y')} (\eta'(x),\eta(\refl{x}))=(y',p)
\]
is $L$-local for every $y':L'X$. 
Thus \cref{proposition:inductionLseparated} reduces the problem to
constructing a term of type
\begin{equation*}
\prd{y:X}{p:L(x=y)}(\eta'(x),\eta(\refl{x}))=(\eta'(y),p).
\end{equation*}

On the other hand, for $y : X$ we have equivalences
\begin{align*}
    \left(\sm{p:L(x=y)}(\eta'(x),\eta(\refl{x}))=(\eta'(y),p)\right)
 & \eqvsym \left(\sm{p:L(x=y)}{\alpha:\eta'(x)=\eta'(y)} \trans{\alpha}{\eta(\refl{x})} = p \right) \\
 & \eqvsym \sm{\alpha:\eta'(x) = \eta'(y)} \unit \\
 & \eqvsym \left( \eta'(x) = \eta'(y) \right),
\end{align*}
where the last type is clearly $L$-local.
So we can apply \cref{theorem:generalized-induction} to reduce the problem to
constructing a term of type
\begin{equation*}
    \prd{y:X}{p:x=y}(\eta'(x),\eta(\refl{x}))=(\eta'(y),\eta(p)) ,
\end{equation*}
which we can do using path induction.
\end{proof}

Before proving the next result,
we need a lemma, which follows directly from the dependent elimination of $L'$.

\begin{lem}\label{lemma:Lequivalencetotalspaces}
Let $P:L'X\to \UU$ be a type family over $L'X$. 
Then the map
\begin{equation*}
f:\Big(\sm{x:X} P(\eta'(x))\Big)\to \Big(\sm{y:L'X}P(y)\Big)
\end{equation*}
given by $(x,p)\mapsto (\eta'(x),p)$ is an $L$-equivalence. 
\end{lem}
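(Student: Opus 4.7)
The plan is to invoke \cref{lemma:characterizationorthogonal}, which reduces showing that $f$ is an $L$-equivalence to showing that for every $L$-local type $Z$, the precomposition map
\[
    \precomp{f} : \Big(\sm{y:L'X}P(y) \to Z\Big) \lra \Big(\sm{x:X}P(\eta'(x)) \to Z\Big)
\]
is an equivalence.

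First, I would rewrite both sides using the currying equivalence, so that $\precomp{f}$ becomes identified with the precomposition map
\[
    \precomp{\eta'} : \Big(\prd{y:L'X} P(y)\to Z\Big) \lra \Big(\prd{x:X} P(\eta'(x))\to Z\Big).
\]
A brief check shows that under currying, precomposing with $f = (x,p)\mapsto(\eta'(x),p)$ corresponds exactly to precomposing with $\eta'$ in the first argument, so this identification is indeed natural.

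Next, I would apply \cref{proposition:inductionLseparated} (the dependent elimination of $L'$) to the type family $Q : L'X \to \UU$ defined by $Q(y) \defeq P(y) \to Z$. Since $Z$ is $L$-local and reflective subuniverses are closed under arbitrary dependent products (in particular, under function types into local types), each fiber $Q(y) = P(y) \to Z$ is $L$-local. Thus the hypothesis of \cref{proposition:inductionLseparated} is satisfied, and precomposition with $\eta'$ induces an equivalence
\[
    \prd{y:L'X} (P(y)\to Z) \;\simeq\; \prd{x:X} (P(\eta'(x))\to Z),
\]
which is precisely the map we needed. There is no significant obstacle here: the argument is essentially a direct unpacking, with the only real content being the identification of $\precomp{f}$ with $\precomp{\eta'}$ via currying and the verification that $P(y)\to Z$ is $L$-local fiberwise.
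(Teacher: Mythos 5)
Your proposal is correct and is essentially the paper's own proof: both reduce to checking precomposition into an arbitrary $L$-local $Z$ via \cref{lemma:characterizationorthogonal}, curry both sides, and then apply \cref{proposition:inductionLseparated} to the family $y \mapsto (P(y) \to Z)$, whose fibers are $L$-local because $Z$ is. No meaningful difference in route or content.
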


\begin{proof}
Assume given an $L$-local type $Z$ and notice that we have the following factorization of $\precomp{f}$:
    \begin{align*}
        \left(\sm{y : L'X} P(y)\right) \to Z &\simeq \prd{y : L'X} P(y) \to Z\\
                                         &\simeq \prd{x : X} P(\eta' x) \to Z \\
                                         &\simeq \left(\sm{x:X} P(\eta' x)\right) \to Z .
    \end{align*}
In the second equivalence, we use \cref{proposition:inductionLseparated} together with the fact
that $Z$ and $P(y) \to Z$ are $L$-local.
\end{proof}

\begin{prop}\label{remark:preservationpullbacks} %
    Given a cospan $Y \xrightarrow{g} Z \xleftarrow{f} X$, let $P$ denote its pullback.
    Let $Q$ denote the pullback of the $L'$-localized cospan $L'Y \rightarrow L'Z \leftarrow L'X$.
    Then the map $P \to Q$ induced by the naturality of $L'$-localization is an $L$-equivalence.
\end{prop}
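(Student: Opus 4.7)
The plan is to factor the natural map $P \to Q$ as a composite of $L$-equivalences, peeling off each $L'$-localization on the base coordinates using \cref{lemma:Lequivalencetotalspaces}, and handling the identity-type coordinate using \cref{Lseparatedloopspace}.

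First I would rewrite the target pullback as
\[
Q\,\simeq\,\sm{x':L'X}\sm{y':L'Y}(L'f)(x')=(L'g)(y'),
\]
and apply \cref{lemma:Lequivalencetotalspaces} to the outer sum. Using the naturality homotopy $(L'f)\circ\eta'\htpy \eta'\circ f$ to simplify $(L'f)(\eta'(x))$, this produces an $L$-equivalence
\[
Q_1\defeq \sm{x:X}\sm{y':L'Y}\eta'(f(x))=(L'g)(y')\,\lra\, Q.
\]
Reordering the sums and applying \cref{lemma:Lequivalencetotalspaces} once more, now to the sum over $L'Y$ and using the naturality homotopy $(L'g)\circ\eta'\htpy \eta'\circ g$, I obtain an $L$-equivalence
\[
Q_2\defeq \sm{x:X}\sm{y:Y}\eta'(f(x))=\eta'(g(y))\,\lra\, Q_1.
\]

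Next, I would observe that the canonical map $P \to Q$ factors through $Q_2$ via $(x,y,p)\mapsto(x,y,\mathsf{ap}_{\eta'}(p))$. By \cref{Lseparatedloopspace}, over each $(x,y)\in X\times Y$ this fiberwise map is (up to the equivalence $L(f(x)=g(y))\simeq(\eta'(f(x))=\eta'(g(y)))$) the $L$-localization unit $\eta:(f(x)=g(y))\to L(f(x)=g(y))$, and hence is an $L$-equivalence. A fiberwise $L$-equivalence between type families over a common base induces an $L$-equivalence on total spaces: this follows from \cref{lemma:characterizationorthogonal} via the natural equivalence $\bigl(\sm{b:B}A(b)\to Z\bigr) \simeq \prd{b:B}\bigl(A(b)\to Z\bigr)$, which reduces precomposition on total spaces to a product of precomposition maps on fibers. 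Thus $P\to Q_2$ is an $L$-equivalence.

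Composing the three $L$-equivalences $P\to Q_2\to Q_1\to Q$ gives the result, provided the composite agrees up to homotopy with the canonical $P\to Q$ induced by $L'$-functoriality. I expect the main obstacle to lie precisely in this last bookkeeping check: one must verify that the naturality homotopies $L'f\circ\eta'\htpy\eta'\circ f$ and $L'g\circ\eta'\htpy\eta'\circ g$ used to simplify $Q_1$ and $Q_2$ assemble, together with $\mathsf{ap}_{\eta'}(p)$, into the path in $L'Z$ produced by $L'$-functoriality from $p:f(x)=g(y)$. This should amount to a routine diagram chase with the naturality squares of $\eta'$, so the conceptual content is contained in the two appeals to \cref{lemma:Lequivalencetotalspaces} and the fiberwise application of \cref{Lseparatedloopspace}.
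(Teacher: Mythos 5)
Your argument is correct, and it performs in essence the same reduction as the paper's proof: the paper establishes the proposition by applying $L$ to the canonical map and writing $LP \simeq LQ$ as a chain of equivalences that carries out exactly your three steps --- replacing $f(x)=g(y)$ by $\eta'(f(x))=\eta'(g(y))$ via \cref{Lseparatedloopspace} together with the naturality squares of $L'$, and then trading the sums over $X$ and $Y$ for sums over $L'X$ and $L'Y$ via \cref{lemma:Lequivalencetotalspaces}. The genuine (if modest) difference lies in how the identity-type coordinate is handled: where the paper cites an external result (that $L\bigl(\sm{x:X}P(x)\bigr)\simeq L\bigl(\sm{x:X}LP(x)\bigr)$, i.e.\ the fiberwise unit is an $L$-equivalence) to justify that step, you prove the slightly more general fact that any fiberwise $L$-equivalence between families over a common base induces an $L$-equivalence on total spaces, directly from \cref{lemma:characterizationorthogonal}; this makes your proof self-contained, and you factor the map $P \to Q$ itself rather than its $L$-localization. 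The bookkeeping obligation you flag at the end --- that the composite $P \to Q_2 \to Q_1 \to Q$ agrees up to homotopy with the canonical map once the same naturality homotopies are used throughout --- is indeed a routine check, and the paper's proof silently carries the analogous obligation (its chain of equivalences must be identified with $L$ applied to the canonical map), so your proposal supplies everything the paper's argument does.
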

\begin{proof}
    The result follows by observing that the $L$-localization of the map $P \to Q$
    can be factored as the following chain of equivalences:
    \begin{align*}
         LP &\simeq L \left( \sm{x:X}\sm{y:Y} \, f(x) = g(y)\right)\\
            &\simeq L \left( \sm{x:X}\sm{y:Y} \, L (f(x) = g(y))\right)\\
            &\simeq L \left( \sm{x:X}\sm{y:Y} \, \eta'(f(x)) = \eta'(g(y))\right)\\
            &\simeq L \left( \sm{x:X}\sm{y:Y} \, L'f(\eta'(x)) = L'g(\eta'(y))\right)\\
            &\simeq L \left( \sm{x':L'X}\sm{y':L'Y} \, L'f(x') = L'g(y')\right) \simeq LQ .
    \end{align*}
    Here we used~\cite[Theorem~1.24]{RSS} in the second equivalence and \cref{Lseparatedloopspace} in the third one.
    For the fourth equivalence we use the naturality squares of $L'$, while the fifth equivalence
    follows from \cref{lemma:Lequivalencetotalspaces}.
\end{proof}

As a consequence we get a result about the preservation of certain fiber sequences.

\begin{cor}\label{corollary:preservationfibersequences2}
    Given a fiber sequence $F \to Y \xrightarrow{f} X$, there is a map of fiber sequences
    \[
        \begin{tikzpicture}
          \matrix (m) [matrix of math nodes,row sep=2em,column sep=3em,minimum width=2em]
            { F & Y & X \\
              F' & L'Y & L'X \\};
          \path[->]
            (m-1-1) edge [right hook->] node [right] {} (m-1-2)
                    edge node [right] {} (m-2-1)
            (m-1-2) edge node [above] {$f$} (m-1-3)
                    edge node [right] {$\eta'$} (m-2-2)
            (m-2-2) edge node [above] {$L'f$} (m-2-3)
            (m-2-1) edge [right hook->] node [right] {} (m-2-2)
            (m-1-3) edge node [right] {$\eta'$} (m-2-3)
            ;
        \end{tikzpicture}
    \]
    in which the left vertical map is an $L$-equivalence.
    Here $F$ is the fiber over some $x_0 : X$, and $F'$ is the fiber over $\eta'(x_0)$.\qed
\end{cor}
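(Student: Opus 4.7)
The plan is to realize the corollary as a direct consequence of \cref{remark:preservationpullbacks} applied to the cospan defining the fiber, using that the unit type is fixed by $L'$.

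First I would recall that the fiber $F \defeq \fib{f}{x_0}$ is, by definition, the pullback of the cospan $\unit \xrightarrow{x_0} X \xleftarrow{f} Y$. Similarly, setting $F' \defeq \fib{L'f}{\eta'(x_0)}$, the type $F'$ is the pullback of $\unit \xrightarrow{\eta'(x_0)} L'X \xleftarrow{L'f} L'Y$. The naturality squares of $\eta'$ together with the trivial square $\unit \to \unit$ over $x_0 \mapsto \eta'(x_0)$ form a morphism of cospans, so by functoriality of pullbacks we obtain an induced map $F \to F'$ making the displayed diagram commute, and the two rows are fiber sequences by construction.

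It then remains to show that this induced map $F \to F'$ is an $L$-equivalence. For this I would apply \cref{remark:preservationpullbacks} to the cospan $\unit \to X \leftarrow Y$, which produces an $L$-equivalence from $F$ to the pullback $Q$ of the $L'$-localized cospan $L'\unit \to L'X \leftarrow L'Y$. The key observation is that $L' \unit \simeq \unit$: since $\unit$ is $L$-local (every reflective subuniverse contains $\unit$), it is in particular $L$-separated, so $\eta' : \unit \to L'\unit$ is an equivalence. Under this equivalence, $Q$ is identified with the pullback $F'$ of $\unit \xrightarrow{\eta'(x_0)} L'X \xleftarrow{L'f} L'Y$, and a direct check shows that the equivalence $Q \simeq F'$ identifies the map $F \to Q$ from \cref{remark:preservationpullbacks} with the canonical map $F \to F'$ constructed above. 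Hence $F \to F'$ is an $L$-equivalence.

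I do not expect any serious obstacle: the only subtlety is making sure that the comparison map $F \to F'$ produced by the functoriality of pullbacks agrees with the $L$-equivalence $F \to Q$ coming from \cref{remark:preservationpullbacks}, but this is immediate from the universal property of pullbacks applied to the evident morphism of cospans.
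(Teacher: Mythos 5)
Your proposal is correct and is essentially the paper's own (implicit) argument: the corollary is stated without proof precisely because it follows directly from \cref{remark:preservationpullbacks} applied to the cospan $\unit \to X \leftarrow Y$ defining the fiber, using that $\unit$ is $L$-local, hence $L$-separated, so $\eta' : \unit \to L'\unit$ is an equivalence and the localized pullback is identified with $F'$. The compatibility check you mention at the end is indeed routine, so there is nothing to add.
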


\begin{prop}\label{cor:L'equivalenceisLconnected} %
    Every $L'$-equivalence $f : Y \to X$ is $L$-connected.
    In particular, $\eta' : X \to L'X$ is $L$-connected.
\end{prop}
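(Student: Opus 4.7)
The plan is to reduce the claim directly to \cref{remark:preservationpullbacks} by realizing the fibers of $f$ as pullbacks. Fix a point $x_0 : X$; I must show that $L(\fib{f}{x_0})$ is contractible. The fiber $\fib{f}{x_0}$ is the pullback of the cospan $Y \xrightarrow{f} X \xleftarrow{x_0} \unit$. Since $\unit$ is $L$-local, hence $L'$-local, the corresponding $L'$-localized cospan simplifies to $L'Y \xrightarrow{L'f} L'X \xleftarrow{\eta'(x_0)} \unit$, whose pullback is $\fib{L'f}{\eta'(x_0)}$. By \cref{remark:preservationpullbacks}, the induced map $\fib{f}{x_0} \to \fib{L'f}{\eta'(x_0)}$ is an $L$-equivalence.

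Next I will exploit the hypothesis that $f$ is an $L'$-equivalence: $L'f$ is then an equivalence, so $\fib{L'f}{\eta'(x_0)}$ is contractible. Contractible types are $L$-local, so applying $L$ to the $L$-equivalence of the previous paragraph shows that $L(\fib{f}{x_0})$ is also contractible. Since this holds for every $x_0 : X$, by definition $f$ is $L$-connected.

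For the ``in particular'' clause I only need to observe that $\eta' : X \to L'X$ is itself an $L'$-equivalence: applying $L'$ produces the unit map $L'X \to L'(L'X)$, which is an equivalence because $L'X$ is already $L'$-local.

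I do not anticipate any significant obstacle: essentially all of the technical work has already been absorbed into \cref{remark:preservationpullbacks}, and this corollary is simply its restatement in the language of connectedness. The only point that needs a moment's care is that $\unit$ lies in $L'$ so that the $L'$-localization does not disturb the base of the cospan, but this is immediate from the facts that reflective subuniverses contain $\unit$ and that $L$-local types are $L$-separated.
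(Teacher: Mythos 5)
Your proof is correct and follows essentially the same route as the paper: the paper cites \cref{corollary:preservationfibersequences2} (which is exactly the specialization of \cref{remark:preservationpullbacks} to a cospan with base $\unit$ that you carry out inline) to get the $L$-equivalence $\fib{f}{x_0} \to \fib{L'f}{\eta'(x_0)}$, and then concludes as you do from $L'f$ being an equivalence. Your closing observation that $\eta'$ is an $L'$-equivalence is also the paper's (it is the general fact, noted after \cref{lemma:characterizationorthogonal}, that units of reflective subuniverses are equivalences after localization).
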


\begin{proof}
    Let $x : X$.  We must show that the $L$-localization of $F$, the fiber of $f$ over $x : X$, is contractible.
    To prove this we use \cref{corollary:preservationfibersequences2},
    which gives us an $L$-equivalence between $F$ and $F'$, the fiber of $L'f$ at $\eta' x$.
    The map $L' f$ is an equivalence by hypothesis, so $F' \simeq 1$, and thus $F$ is $L$-connected.
\end{proof}

While the converse of \cref{cor:L'equivalenceisLconnected} does not hold,
it is shown in~\cite[Lemma~1.35]{RSS} that every $L$-connected map is an $L$-equivalence.
The reader may find the following diagram of implications helpful:
\[
\begin{tikzcd} %
  L'\text{-connected} \arrow[r,Rightarrow] \arrow[d,Rightarrow]
           & L'\text{-equivalence} \arrow[d,Rightarrow] \arrow[dl,Rightarrow] \\
  L\text{-connected} \arrow[r,Rightarrow]     & L\text{-equivalence} .
\end{tikzcd}
\]
The vertical implications follow from the fact that every $L$-local type is $L'$-local.

\begin{prop}\label{proposition:2outof3} %
    Given maps $f : Y \to X$ and $g : X \to Z$ such that $g \circ f$ is $L'$-connected,
    then $f$ is $L$-connected if and only if $g$ is $L'$-connected.
\end{prop}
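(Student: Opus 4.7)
Both directions analyze the fiber sequence
\[
\fib{f}{x} \to \fib{gf}{g(x)} \xrightarrow{\pi} \fib{g}{g(x)}
\]
at $(x, \refl{g(x)})$, where $\pi(y, r) \defeq (f(y), r)$.

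Assume first that $g$ is $L'$-connected. For any $x : X$, \cref{corollary:preservationfibersequences2} applied to the fiber sequence above yields a fiber sequence $F' \to L'\fib{gf}{g(x)} \to L'\fib{g}{g(x)}$ together with an $L$-equivalence $\fib{f}{x} \to F'$. Both $L'\fib{gf}{g(x)}$ and $L'\fib{g}{g(x)}$ are contractible by hypothesis, so $F' \simeq \unit$; hence $\fib{f}{x} \to \unit$ is an $L$-equivalence, giving $L\fib{f}{x} \simeq \unit$ and proving $f$ is $L$-connected.

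Now assume instead that $f$ is $L$-connected and fix $z : Z$. Let $c \in L'\fib{g}{z}$ be the image under $L'\pi$ of the center of $L'\fib{gf}{z} \simeq \unit$; by contractibility of the domain, every $(y, r) : \fib{gf}{z}$ satisfies $\eta'(f(y), r) = c$. The family $b \mapsto (b = c)$ over $L'\fib{g}{z}$ takes $L$-local values since $L'\fib{g}{z}$ is $L$-separated, so by \cref{proposition:inductionLseparated} it suffices to construct a path $\eta'(x, q) = c$ for every $(x, q) : \fib{g}{z}$. Fix such a pair. Each $y : Y$ equipped with $p : f(y) = x$ gives an element $(y, \ap{g}(p) \cdot q) : \fib{gf}{z}$ whose image under $\pi$ equals $(x, q)$ (the required equality in $\fib{g}{z}$ is witnessed by $p$), producing a path $\eta'(x, q) = \eta'(f(y), \ap{g}(p) \cdot q) = c$. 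This defines a map $\fib{f}{x} \to (\eta'(x, q) = c)$ into an $L$-local type; since $\fib{f}{x}$ is $L$-connected, the universal property of the $L$-localization factors it uniquely through $L\fib{f}{x} \simeq \unit$, producing the required canonical path.

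The forward direction is the main obstacle: a more direct attempt to show that $L'\pi$ is an equivalence (so as to transport contractibility from $L'\fib{gf}{z}$ to $L'\fib{g}{z}$) fails, since $\pi$ has only $L$-connected fibers and is therefore not in general an $L'$-equivalence. The use of \cref{proposition:inductionLseparated} together with the universal property of $L$-localization sidesteps this obstruction.
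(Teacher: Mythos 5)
Your proof is correct, but it is organized differently from the paper's in both directions. For the direction ($g$ is $L'$-connected $\Rightarrow$ $f$ is $L$-connected), the paper argues at the level of maps: $L'$-connected maps are $L'$-equivalences, so $g\circ f$ and $g$ being $L'$-equivalences forces $f$ to be one by 2-out-of-3, and then \cref{cor:L'equivalenceisLconnected} finishes; your fiber-sequence argument via \cref{corollary:preservationfibersequences2} essentially inlines the proof of that proposition, which is fine and no longer. For the converse, the paper reduces to fibers over $z:Z$ and then shows that $L'\pi$ \emph{is} an equivalence: since $L'\fib{gf}{z}$ is contractible, the fibers of $L'\pi$ are identity types of $L'\fib{g}{z}$ and hence $L$-local, so by \cref{corollary:preservationfibersequences2} they are $L$-localizations of the $L$-connected fibers of $f$, hence contractible (using surjectivity of $\eta'$, \cref{lemma:etasurjective}, to reduce to fibers over $\eta'$-points). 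You instead bypass $L'\pi$ and build a contraction of $L'\fib{g}{z}$ directly: the center $c$ comes from $L'\fib{gf}{z}$, the contraction is reduced via \cref{proposition:inductionLseparated} to paths $\eta'(x,q)=c$, and each such path is extracted from the map $\fib{f}{x}\to(\eta'(x,q)=c)$ using $L$-connectedness of the fiber and $L$-locality of the identity type. Both arguments rest on the same two pillars ($L$-separatedness of $L'$-localizations and $L$-connectedness of the fibers of $f$), but your pointwise path construction is a genuine variant and arguably makes the mechanism more explicit, at the cost of some path algebra. One quibble: your closing remark that the ``show $L'\pi$ is an equivalence'' strategy \emph{fails} is not accurate --- it is exactly what the paper does; the point is that one does not need $\pi$ to be an $L'$-equivalence in general, only that in this situation the contractibility of $L'\fib{gf}{z}$ makes the fibers of $L'\pi$ $L$-local, which rescues the direct approach.
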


\begin{proof}
    If $g$ is $L'$-connected, then both $g\circ f$ and $g$ are $L'$-equivalences,
    and thus $f$ is an $L'$-equivalence. Then \cref{cor:L'equivalenceisLconnected}
    implies that $f$ is $L$-connected.

    For the converse, notice that taking fibers over each $z:Z$ reduces the problem
    to showing that given an $L$-connected map $f : Y \to X$
    such that $Y$ is $L'$-connected, it follows that $X$ is $L'$-connected.

    Since $L'Y$ is contractible, it is enough to show that $L'f$ is an equivalence.
    To do this, we prove that the fibers of $L'f$ are contractible.
    Since this a proposition, and $\eta' : X \to L'X$ is surjective (\cref{lemma:etasurjective}), it is enough
    to show that, for each $x : X$, the fiber $F'$ of $L'f$ at $\eta'(x)$ is contractible.
    Notice that $F'$ is $L$-local, since $L' Y$ being contractible implies that $F'$ is
    equivalent to an identity type of $L' X$.
    Now, using \cref{corollary:preservationfibersequences2},
    we get an $L$-equivalence between the fiber of $f$ at $x$, and $F'$.
    But since $F'$ is also $L$-local, this is in fact an $L$-localization.
    Using the fact that $f$ is $L$-connected, we deduce that $F'$ is contractible.
\end{proof}

\begin{rmk}
The above proposition almost gives us a $2$-out-of-$3$ property that combines $L$ and $L'$.
However the map $\emptyt \to \unit$ is $(-2)$-connected, and $\unit$ is $(-1)$-connected,
whereas $\emptyt$ is not $(-1)$-connected. So the remaining implication of the $2$-out-of-$3$ property
does not hold. One can show the weaker result that the composite of an $L$-connected map followed by
an $L'$-connected map is $L$-connected.
\end{rmk}

\begin{rmk}
\cref{Lseparatedloopspace} allows us to give a concrete description of
the extension defined in \cref{lemma:extendtoUL}.
Using an argument similar to the one used in the proof of \cref{remark:preservationpullbacks},
one can show that given an $L'$-localization $\eta' : X \to L' X$
and a map $f: Y \to X$ with $L$-local fibers, $f$ is the pullback of
the fiberwise $L$-localization of $\eta' \circ f$.
\end{rmk}

We conclude this section with a characterization of $L'$-localizations.

\begin{thm}\label{theorem:commutativityloopreflectivesubuniv}
Consider a reflective subuniverse $L$, and let $X$ be a type.
For a map $\eta':X\to X'$, the following are equivalent:
\begin{enumerate}
\item $\eta':X\to X'$ is an $L'$-localization.
\item The map $\eta':X\to X'$ is surjective and for any $x,y:X$,
\[
  \mathsf{ap}_{\eta'}:(x=y) \lra (\eta'(x)=\eta'(y))
\]
is an $L$-localization.
\end{enumerate}
\end{thm}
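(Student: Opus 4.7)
The plan is to observe that this theorem packages together several results already established in this section, so the proof amounts to assembling the pieces and checking that the hypotheses match up correctly.

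The direction (2) $\Rightarrow$ (1) is precisely \cref{lemma:sufficientforlocalization}, which already shows that if $\eta' : X \to X'$ is surjective and $\mathsf{ap}_{\eta'}$ induces $L$-localizations on identity types, then $\eta'$ is an $L'$-localization. So no further work is needed here; I would just cite that proposition.

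For (1) $\Rightarrow$ (2), I would split into the two conclusions. Surjectivity is immediate: \cref{lemma:etasurjective} states that the unit of any $L'$-localization is surjective. For the second conclusion, fix $x, y : X$. Since $X'$ is $L$-separated, the type $\eta'(x) = \eta'(y)$ is $L$-local. Thus by the universal property of $L$-localization applied to $\mathsf{ap}_{\eta'} : (x = y) \to (\eta'(x) = \eta'(y))$, there is a unique map $\phi : L(x = y) \to (\eta'(x) = \eta'(y))$ with $\phi \circ \eta = \mathsf{ap}_{\eta'}$, and $\mathsf{ap}_{\eta'}$ is an $L$-localization if and only if $\phi$ is an equivalence. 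That $\phi$ is an equivalence is exactly the content of \cref{Lseparatedloopspace}, which was stated for the canonical unit $\eta' : X \to L'X$ but applies to any $L'$-localization, since $L'$-localizations are unique up to equivalence over $X$ (so we can transport the result along the equivalence $X' \simeq L'X$ lying under $X$).

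There is no substantial obstacle in this proof: both directions reduce to results proved earlier in the section. The only minor point worth being explicit about is the appeal to uniqueness of $L'$-localizations when invoking \cref{Lseparatedloopspace}, since that proposition is phrased in terms of the specific unit $\eta' : X \to L'X$ rather than an arbitrary $L'$-localization. Once this identification is made, the equivalence in the statement is immediate.
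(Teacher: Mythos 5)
Your proof is correct and follows essentially the same route as the paper: (2)~$\Rightarrow$~(1) is \cref{lemma:sufficientforlocalization}, and (1)~$\Rightarrow$~(2) combines \cref{lemma:etasurjective} with \cref{Lseparatedloopspace}. The extra remark about transporting \cref{Lseparatedloopspace} along the canonical equivalence $X' \simeq L'X$ (using uniqueness of localizations) is a careful touch that the paper leaves implicit, but it is the same argument.
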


\begin{proof}
Assume that (1) holds.
The map $\eta'$ is surjective by \cref{lemma:etasurjective}.
The other claim follows from \cref{Lseparatedloopspace}.

The other implication is \cref{lemma:sufficientforlocalization}.
\end{proof}

\begin{rmk}
It follows that a type $X$ is $L'$-connected if and only if
it is merely inhabited and $x_1 = x_2$ is $L$-connected for every $x_1, x_2 : X$.
More generally, using that the identity types of the fibers of $f : A \to B$ are
equivalent to fibers of $A \to A \times_B A$, 
we see that a map $f : A \to B$ is $L'$-connected if and only if
$f$ is surjective and the diagonal $A \to A \times_B A$ is $L$-connected.
\end{rmk}

\section{Localization with respect to a family of maps}\label{section:localization}

In this section, we discuss localization with respect to a family of maps.
Our primary examples are localization at the degree $k$ map from $\sphere{1}$ to $\sphere{1}$
and localization at a family of such maps.

In \cref{ss:basic-properties}, we prove some basic properties and study some consequences
of the general theory of separated subuniverses in the case of localization with respect to families of maps.
In \cref{ss:connectedness}, we give conditions under which
$f$-localization preserves coproducts and connectedness.
Finally, \cref{ss:orthogonal-factorization-systems} contains results about the interaction between
orthogonal factorization systems and localizations at families of maps, generalizing some
previous results. The results in \cref{ss:orthogonal-factorization-systems} are not used in the rest
of the paper.

\subsection{Local types and their properties}\label{ss:basic-properties}

We recall the following definitions from~\cite{RSS}.
\begin{defn}\ 
\begin{enumerate}
\item    Let $f: \prd{i:I} A_i \to B_i$ be a family of maps indexed by a type $I$.
    A type $X$ is \define{$f$-local} if
    $\precomp{f_i} : (B_i \to X) \to (A_i \to X)$ is an equivalence for every $i : I$.
\item Let $A:I\to\UU$ be a family of types. A type $X$ is said to be \define{$A$-null} if it is $u$-local for the family of maps $u:\prd{i:I}A_i\to\unit$. 
\end{enumerate}
\end{defn} 

As previously noted, \cite[Theorem 2.18]{RSS} shows that for every
family $f$, the $f$-local types form a reflective subuniverse.
The localization $L_f X$ is constructed as a higher inductive type,
and we write $\eta_X : X \to L_f X$ for the unit.
In the case of a family $A:I\to\UU$, localization at the unique family $u:\prd{i:I}A_i \to \unit$ is called \define{$A$-nullification}.
By \cite[Theorem 2.19]{RSS}, the reflective subuniverse of $A$-null types is stable under dependent sums and therefore $A$-nullification is a modality (\cref{modality}).

\begin{eg} We recall the following basic examples from~\cite{RSS}.
\begin{enumerate}
\item The unit type is $f$-local for any map $f$.
\item A type $X$ is $(\emptyt\to\unit)$-local if and only if $X$ is contractible.
\item A type $X$ is $(\sphere{n+1}\to\unit)$-local if and only if $X$ is $n$-truncated.
\end{enumerate}
\end{eg}

When $f$ consists of pointed maps between pointed types, we can test whether
a type $X$ is local using the pointed mapping types.

\begin{lem}\label{lemma:pointed}
    If $f : \prd{i:I} A_i \to B_i$ is a family of pointed maps between pointed types,
    then a type $X$ is $f$-local if and only if
    $\precomp{f_i} : (B_i \pto X) \to (A_i \pto X)$ is an equivalence
    for every base point $x : X$ and every $i : I$.
    If $X$ is connected, then it is enough to check this for one $x : X$.
\end{lem}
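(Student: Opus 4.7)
The plan is to reduce the question about pointed mapping spaces to one about ordinary mapping spaces by viewing pointed mapping spaces as fibers of evaluation maps, and then apply the fact that a map of types over a base is an equivalence iff it is a fiberwise equivalence.

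First, I would recall that for any pointed type $(B, b_0)$ and any type $X$ with point $x$, we have a natural equivalence
\[
  (B \pto X) \;\simeq\; \fib{\mathsf{ev}_{b_0}}{x}
\]
where $\mathsf{ev}_{b_0} : (B \to X) \to X$ evaluates at the basepoint. Using the pointedness data $p_i : f_i(a_i) = b_i$ of the map $f_i$, the square
\[
\begin{tikzcd}
(B_i \to X) \ar[r, "\precomp{f_i}"] \ar[d, swap, "\mathsf{ev}_{b_i}"] & (A_i \to X) \ar[d, "\mathsf{ev}_{a_i}"] \\
X \ar[r, equal] & X
\end{tikzcd}
\]
commutes up to a homotopy that sends $g : B_i \to X$ to $\ap{g}(p_i) : g(f_i(a_i)) = g(b_i)$. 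Taking fibers over a chosen $x : X$ then gives, for each base point $x$, the precomposition map $\precomp{f_i} : (B_i \pto X) \to (A_i \pto X)$ of pointed mapping spaces (with $x$ taken as base point of $X$).

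Next I would invoke the standard fact that a map between two types over $X$ (in this case over $\idfunc[X]$) is an equivalence if and only if it induces an equivalence on fibers over every point of $X$. Applying this to the square above shows that $\precomp{f_i} : (B_i \to X) \to (A_i \to X)$ is an equivalence if and only if $\precomp{f_i} : (B_i \pto X) \to (A_i \pto X)$ is an equivalence for every $x : X$. Quantifying over $i : I$ yields the first statement of the lemma.

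For the final clause, I would observe that being an equivalence is a mere proposition, and that if $X$ is $0$-connected, then any two of its points are merely equal. Hence for a family $P : X \to \Prop$ (here $P(x)$ is the proposition that the induced map on fibers at $x$ is an equivalence), it suffices to inhabit $P$ at a single point $x_0$: for any $x : X$ we merely have a path $x_0 = x$, and transport along it (which is trivial on propositions but justified by connectedness) gives $P(x)$. The main, almost routine, obstacle is just bookkeeping the pointedness data $p_i$ carefully enough that the homotopy commutativity of the square above is genuine; everything else is an application of the fiberwise equivalence principle.
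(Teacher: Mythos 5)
Your proposal is correct and follows essentially the same route as the paper: identifying the pointed mapping spaces as the fibers of the evaluation maps over each $x : X$, applying the fact that a map over $X$ is an equivalence iff it is a fiberwise equivalence, and handling the connected case via mere propositionality of being an equivalence.
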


\begin{proof}
    The second claim follows from the first one, since
    $\mathsf{isEquiv}$ is a mere proposition.
    For the first claim, fix $i : I$ and consider the diagram
    \[
        \begin{tikzpicture}
          \matrix (m) [matrix of math nodes,row sep=2em,column sep=3em,minimum width=2em]
          { (B_i \pto X) & (B_i \to X) & X \\
            (A_i \pto X) & (A_i \to X) & X , \\};
          \path[->]
            (m-1-1) edge [right hook->] node [above] {} (m-1-2)
                    edge node [left] {} (m-2-1)
            (m-1-2) edge node [above] {} (m-1-3)
                    edge node [left] {} (m-2-2)
            (m-2-1) edge [right hook->] node [above] {} (m-2-2)
            (m-2-2) edge node [above] {} (m-2-3)
            (m-1-3) edge [double equal sign distance,-] node [left] { } (m-2-3)
            ;
        \end{tikzpicture}
    \]
    where the horizontal sequences are fiber sequences with the fiber taken over
    some $x : X$. Since fiberwise maps are equivalences exactly when they are fiberwise
    equivalences, the vertical map in the middle is an equivalence if and
    only if the vertical map on the left is an equivalence for every $x : X$.
\end{proof}

Since the pointed mapping space $\unit\to_\ast X$ is contractible for any pointed space $X$, we have the following corollary.

\begin{cor}\label{cor:pointed_null}
If $A$ is a family of pointed types, then a type $X$ is $A$-null
if and only if $(A_i \pto X)$ is contractible
for every base point $x : X$ and every $i : I$. \qed
\end{cor}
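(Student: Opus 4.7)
The plan is to apply \cref{lemma:pointed} to the family of canonical maps $u : \prd{i:I} A_i \to \unit$. Each $u_i$ is automatically a pointed map between pointed types, since $\unit$ has a unique element which we take as its basepoint and $A_i$ is assumed pointed. By definition, $X$ is $A$-null precisely when $X$ is $u$-local, so \cref{lemma:pointed} tells us this is equivalent to
\[
  \precomp{u_i} : (\unit \pto X) \lra (A_i \pto X)
\]
being an equivalence for every basepoint $x : X$ and every $i : I$.

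Now I would invoke the remark preceding the corollary: the pointed mapping type $\unit \pto X$ is contractible for any pointed $X$. A map out of a contractible type is an equivalence if and only if its codomain is contractible, so the condition above reduces to $(A_i \pto X)$ being contractible for every basepoint $x : X$ and every $i : I$, which is exactly the statement to be proved.

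There is no substantial obstacle here; the only thing worth being careful about is the quantification over basepoints of $X$, which is inherited verbatim from \cref{lemma:pointed} and matches the phrasing in the corollary. The contractibility of $\unit \pto X$ itself is immediate from the characterization of pointed maps as pairs $(f, p)$ with $p : f(\ast) = x$, which for $f$ forced to be constant at $x$ reduces to the based path space at $x$.
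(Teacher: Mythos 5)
Your proof is correct and is essentially the paper's own argument: the corollary is stated immediately after the observation that $\unit \pto X$ is contractible, and is obtained exactly as you do by applying \cref{lemma:pointed} to the family $u : \prd{i:I} A_i \to \unit$ and noting that a map out of a contractible type is an equivalence precisely when its codomain is contractible.
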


We turn to a comparison of $L_f$ and $L_{\suspsym f}$.
The next corollary follows immediately from \cref{lemma:characterizationsigmaflocal}
and \cref{Lseparatedloopspace}.

\begin{cor}\label{remark:commutativitylooplocalization}
    Let $f : \prd{i:I} A_i \to B_i$ be a family of maps.
    For any pointed type $X$, we have that $\loopspacesym \eta : \loopspacesym X \to \loopspacesym L_{\susp{f}} X$
    is an $f$-localization. In particular, $\loopspacesym L_{\susp{f}} X \simeq L_f \loopspacesym X$. \qed
\end{cor}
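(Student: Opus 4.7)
The plan is to combine the two cited results almost mechanically. By \cref{lemma:characterizationsigmaflocal}, the $\susp{f}$-local types are exactly the $f$-separated types, so $L_{\susp{f}} = (L_f)'$. In particular, writing $\eta : X \to L_{\susp{f}} X$ for the unit of $L_{\susp{f}}$, this $\eta$ also serves as the unit $\eta'$ of $(L_f)'$-localization.

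Next, I would invoke \cref{Lseparatedloopspace} with $L \defeq L_f$ (so $L' = L_{\susp{f}}$) and with $x \defeq y \defeq x_0$ the basepoint of $X$. That proposition produces a commuting triangle in which the diagonal map $\mathsf{ap}_{\eta'} : (x_0 = x_0) \to (\eta'(x_0) = \eta'(x_0))$ factors through an $L_f$-localization $\eta : (x_0 = x_0) \to L_f(x_0 = x_0)$ via an equivalence $\phi$. Reading off the definitions, $(x_0 = x_0)$ is $\loopspacesym X$, $(\eta'(x_0) = \eta'(x_0))$ is $\loopspacesym L_{\susp{f}} X$ (using the canonical basepoint of $L_{\susp{f}} X$ coming from $\eta(x_0)$), and the map $\mathsf{ap}_{\eta'}$ is by definition $\loopspacesym \eta$. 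So the triangle exhibits $\loopspacesym \eta$ as the composite of an $f$-localization followed by an equivalence, which makes it itself an $f$-localization. The ``in particular'' clause then follows by uniqueness of localizations.

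There is really no obstacle: both ingredients do the work. The only minor care needed is to notice that the triangle from \cref{Lseparatedloopspace} is the exact diagram that witnesses the universal property (via composition with an equivalence), and to identify $\mathsf{ap}_{\eta'}$ on $x_0 = x_0$ with the induced pointed map $\loopspacesym \eta$. Once that identification is explicit, the corollary is immediate.
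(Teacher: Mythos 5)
Your proposal is correct and matches the paper's own argument exactly: the paper derives this corollary immediately from \cref{lemma:characterizationsigmaflocal} (identifying $L_{\susp{f}}$ with $(L_f)'$) together with \cref{Lseparatedloopspace} applied at the basepoint, just as you do. Your added care in identifying $\mathsf{ap}_{\eta'}$ on $x_0 = x_0$ with $\loopspacesym\eta$ is exactly the implicit step the paper leaves to the reader.
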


This result is a type theoretic analog of \cite[Theorem 3.1]{Bousfield} and \cite[3.A.1]{DrorFarjoun}.
Interestingly, the classical proofs use delooping techniques which are not yet available in type theory.
Our proof instead makes essential use of a univalent universe.

Given this corollary, we are led to consider the relationship between $f$-local types and $\suspsym f$-local types, as a step towards comparing $L_f$ and $L_{\suspsym f}$.

\begin{thm}\label{theorem:characterizinglocalness}
    Let $f: \prd{i:I} A_i \to B_i$ be a family of pointed maps between pointed types and let $n \geq 1$.
    Consider the following conditions on a type $X$:
    \begin{enumerate}
    \item $X$ is $f$-local.
    \item $X$ is $\suspsym^{n-1} C_f$-null,
    where $C_f$ is the family of cofibers of the family $f$. 
    \item $X$ is $\suspsym^{n} f$-local.
    \end{enumerate}
    Then (1) $\implies$ (2) $\implies$ (3).
    Moreover, if the pointed mapping spaces $(A_i \pto X)$ and $(B_i \pto X)$
    are $(n-1)$-connected for all choices of base point $x : X$
    and every $i : I$,
    then the three conditions are equivalent.
\end{thm}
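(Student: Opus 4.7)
The plan is to obtain the two implications $(1) \Rightarrow (2) \Rightarrow (3)$ uniformly from the Puppe sequence attached to each $f_i$, and to handle the ``moreover'' direction $(3) \Rightarrow (1)$ via a Whitehead-style lemma that exploits the connectedness hypothesis to undo iterated loop spaces.

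Fix $i : I$. The cofiber sequence $A_i \to B_i \to C_{f_i}$ extends to the Puppe sequence, and applying $(\blank \pto X)$ produces, at every base point of $X$, a long fiber sequence whose three-term segments include, for each $k \geq 0$,
\[
  (\suspsym^k C_{f_i} \pto X) \lra (\suspsym^k B_i \pto X) \lra (\suspsym^k A_i \pto X).
\]
The suspension--loop adjunction (or equivalently, applying $(\blank \pto X)$ to the cofiber sequence $\suspsym^{k-1} C_{f_i} \to \unit \to \suspsym^k C_{f_i}$ together with the contractibility of $(\unit \pto X)$) supplies the shift equivalence $(\suspsym^k C_{f_i} \pto X) \simeq \loopspacesym(\suspsym^{k-1} C_{f_i} \pto X)$ at every base point.

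Assuming $(1)$, \cref{lemma:pointed} makes $(B_i \pto X) \to (A_i \pto X)$ an equivalence, so $(C_{f_i} \pto X)$ is contractible; iterating the shift $(n-1)$ times gives contractibility of $(\suspsym^{n-1} C_{f_i} \pto X)$, and \cref{cor:pointed_null} delivers $(2)$. Assuming $(2)$, \cref{cor:pointed_null} yields $(\suspsym^{n-1} C_{f_i} \pto X)$ contractible, one more shift makes $(\suspsym^n C_{f_i} \pto X)$ contractible, and the fiber sequence forces $(\suspsym^n B_i \pto X) \to (\suspsym^n A_i \pto X)$ to be an equivalence, which by \cref{lemma:pointed} is $(3)$. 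For the moreover direction, I use the natural equivalence $(\suspsym^n B \pto X) \simeq \loopspacesym^n(B \pto X)$, under which $(\suspsym^n f_i)^*$ corresponds to $\loopspacesym^n(f_i^*)$. Fixing a base point of $X$, assumption $(3)$ together with \cref{lemma:pointed} then makes $\loopspacesym^n(f_i^*) : \loopspacesym^n(B_i \pto X) \to \loopspacesym^n(A_i \pto X)$ an equivalence, and the $(n-1)$-connectedness of the source and target allows one to promote this to $f_i^*$ itself being an equivalence; once this holds at every base point, \cref{lemma:pointed} gives $(1)$.

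The main obstacle will be the Whitehead-style lemma: if $g : M \to N$ is a pointed map between $(n-1)$-connected pointed types and $\loopspacesym^n g$ is an equivalence, then $g$ is an equivalence. I plan to prove it by analyzing the fiber $F$ of $g$. The contractibility of $\loopspacesym^n F$ (as the fiber of $\loopspacesym^n g$) gives $\pi_k F = 0$ for $k \geq n$ and, once $F$ is known to be connected, that $F$ is $(n-1)$-truncated. The long exact sequence of homotopy groups of the fibration $F \to M \to N$, together with $\pi_k M = \pi_k N = 0$ for $k < n$ and $\pi_k g$ being an isomorphism for $k \geq n$, forces $\pi_k F = 0$ for $k \leq n-1$, so $F$ is also $(n-1)$-connected. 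Hence $F \simeq \unit$; since $N$ is connected, contractibility of the fiber over the basepoint propagates to all fibers, and $g$ is an equivalence.
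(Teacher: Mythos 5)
Your overall strategy is the paper's: apply $(\blank \pto X)$ to the Puppe sequence of each $f_i$, reduce to pointed mapping spaces via \cref{lemma:pointed} and \cref{cor:pointed_null}, identify $\precomp{(\suspsym^n f_i)}$ with $\loopspacesym^n(\precomp{f_i})$, and finish the ``moreover'' direction with a Whitehead-type step. Your $(1)\Rightarrow(2)$ argument (iterating the shift $(\suspsym^k C_{f_i}\pto X)\simeq\loopspacesym(\suspsym^{k-1}C_{f_i}\pto X)$) is fine and is just a hands-on version of the paper's remark that null types are separated, and your $(3)\Rightarrow(1)$ argument is essentially the paper's, with the Whitehead-style lemma proved by hand instead of citing~\cite[Corollary~8.8.2]{hottbook} $n$ times; there you correctly deduce $(n-1)$-truncatedness of the fiber $F$ from contractibility of $\loopspacesym^n F$ together with connectedness of $F$, rather than from vanishing homotopy groups alone, so you do not run afoul of the failure of Whitehead's theorem.

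The gap is in $(2)\Rightarrow(3)$. After the extra shift you know $(\suspsym^{n} C_{f_i} \pto X)$ is contractible, and you assert that ``the fiber sequence forces $(\suspsym^n B_i \pto X) \to (\suspsym^n A_i \pto X)$ to be an equivalence.'' But $(\suspsym^n C_{f_i}\pto X)$ is only the fiber of $\precomp{(\suspsym^n f_i)}$ over a single point of $(\suspsym^n A_i\pto X)$, namely the constant map, and a map whose fiber over one point is contractible need not be an equivalence. Indeed the implication you are invoking --- $X$ is $\suspsym^n C_f$-null implies $X$ is $\suspsym^n f$-local --- is false in general: take $f:\sphere{0}\to\unit$, so that $C_f\simeq\sphere{1}$; then $\suspsym^n C_f$-null means $n$-truncated, while $\suspsym^n f$-local means $(n-1)$-truncated, and these differ. (The paper's example of $\sphere{1}$ being $M_k$-null but not $\degg(k)$-local is the $n=0$ instance of the same failure.) The repair is to use the adjacent three-term segment of the long fiber sequence, $(\suspsym^{n} B_i \pto X) \to (\suspsym^{n} A_i \pto X) \to (\suspsym^{n-1} C_{f_i} \pto X)$, in which $(\suspsym^n B_i\pto X)$ is the fiber and the base is $(\suspsym^{n-1} C_{f_i}\pto X)$: hypothesis $(2)$ says precisely that this base is contractible, so the fiber inclusion --- which is $\precomp{(\suspsym^n f_i)}$ up to equivalence --- is an equivalence, and your extra shift is not needed. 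This is how the paper argues.
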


\begin{proof}
We will show the required implications for each $i : I$.
So, without loss of generality we consider a single map $f : A \to B$.

By \cref{lemma:pointed}, it suffices to consider pointed mapping
spaces for each $x : X$.
With $x$ chosen, we have a long fiber sequence
\[
  \begin{tikzpicture}[descr/.style={fill=white,inner sep=1.5pt}]
        \matrix (m) [matrix of math nodes,row sep=2em,column sep=3em,minimum width=2em]
        {   &  (A\pto X) & {(B\pto X)} & (C_f \pto X) \\
            & (\suspsym A \pto X) & (\suspsym B \pto X)& (\suspsym C_f \pto X) \\
            & \hspace*{2em}\cdots\hspace*{2em} \\
        };

        \path[overlay,->, font=\scriptsize]
        (m-1-3) edge (m-1-2)
        (m-1-4) edge (m-1-3)
        (m-2-2) edge[out=175,in=355] node[descr,yshift=0.3ex]{}(m-1-4)
        (m-2-3) edge (m-2-2)
        (m-2-4) edge (m-2-3)
        (m-3-2) edge[out=175,in=355] node[descr,yshift=0.3ex] {} (m-2-4);
  \end{tikzpicture}
\]

Assuming (1), from the first fiber sequence in the diagram above
it follows that $(C_f \pto X)$ is contractible. This means that $X$ is $C_f$-null and
thus $\suspsym^{n-1} C_f$-null, since any $C_f$-null type is also $C_f$-separated. Thus (1) implies (2).

To see that (2) implies (3), consider a piece of the above fiber sequence:
\[
  (\suspsym^{n} B \pto X) \xrightarrow{\precomp{(\suspsym^n f)}}
  (\suspsym^n A\pto X) \lra (\suspsym^{n-1} C_f \pto X).
\]
If (2) holds, then the base of the fiber sequence is contractible and thus the inclusion of the
fiber in the total space
is an equivalence, proving (3).

Finally, we show that (3) implies (1) under the connectedness hypotheses.
Notice that we can express
$\precomp{(\suspsym^{n} f)} : (\suspsym^n B \pto X) \to (\suspsym^n A \pto X)$ as
\[
    (\suspsym^n B\pto X) \simeq \loopspacesym^n(B \pto X) \xrightarrow{\loopspacesym^n (\precomp{f})}
    \loopspacesym^n(A \pto X) \simeq (\suspsym^n A \pto X),
\]
using~\cite[Lemma~6.5.4]{hottbook}.
If $X$ is $\suspsym^{n}f$-local, it follows that $\loopspacesym^n (\precomp{f})$ is an equivalence.
By hypothesis, $(A\pto X)$ and $(B\pto X)$ are $(n-1)$-connected, and so by~\cite[Corollary~8.8.2]{hottbook} applied $n$ times
it follows that $\precomp{f} : (B \pto X) \to (A \pto X)$ is an equivalence.
\end{proof}

\subsection{Preservation of coproducts and connectedness}\label{ss:connectedness}

We begin this section by studying conditions under which
$f$-localization preserves coproducts. 
By a coproduct, we mean a set-indexed $\sum$-type.
We first prove a lemma, which will also be used in \cref{ss:localizationofgroups}.

\begin{lem}\label{lemma:setsarelocal}
    Let $f : \prd{i:I} A_i \to B_i$ be a family of maps between connected types.
    Then a coproduct of $f$-local types is $f$-local.
    In particular, sets are $f$-local.
\end{lem}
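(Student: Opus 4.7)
The plan is to reduce the statement to the observation that a map from a connected type to a set is constant, which lets us split up mapping spaces out of $A_i$ and $B_i$ into a dependent sum over the indexing set.

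First I would prove the following auxiliary fact: for any connected type $C$ and any family $Y : J \to \UU$ indexed by a set $J$, precomposition with the first projection induces an equivalence
\[
  \left( C \to \sm{j:J} Y_j \right) \,\simeq\, \sm{j:J} (C \to Y_j).
\]
This follows because a map $C \to \sm{j:J} Y_j$ is the same as a pair consisting of a map $\phi : C \to J$ and a dependent function $\prd{c:C} Y_{\phi(c)}$, and since $J$ is a set and $C$ is connected, the map space $(C \to J)$ is equivalent to $J$ (the map must be merely constant and, because $J$ is a set, this determines a unique value).

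Applying this fact to both $A_i$ and $B_i$ (which are connected by hypothesis), the precomposition map $\precomp{(f_i)} : (B_i \to X) \to (A_i \to X)$, where $X \defeq \sm{j:J} X_j$ with $J$ a set, is equivalent to the induced map
\[
  \sm{j:J} (B_i \to X_j) \lra \sm{j:J} (A_i \to X_j)
\]
which is the identity on the $J$-component and is $\precomp{(f_i)} : (B_i \to X_j) \to (A_i \to X_j)$ on the fiber over $j$. By the assumption that each $X_j$ is $f$-local, each fiber map is an equivalence, and hence the map of total spaces is an equivalence. This shows $X$ is $f$-local.

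For the ``in particular'' part, any set $X$ is equivalent to $\sm{x:X} \unit$, a set-indexed coproduct of copies of $\unit$, and $\unit$ is $f$-local (it is local for any family of maps). So sets are $f$-local by the first part. The only mild obstacle is the auxiliary splitting of $(C \to \sm{j:J} Y_j)$; after that, the argument is purely formal.
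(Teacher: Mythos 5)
Your proposal is correct and follows essentially the same route as the paper: both split $(C \to \sm{j:J} Y_j)$ as $\sm{j:J}(C \to Y_j)$ using connectedness of $A_i$, $B_i$ and the fact that $(C \to J)\simeq J$ for $J$ a set, then observe that precomposition with $f_i$ acts fiberwise over $J$, and obtain the ``in particular'' clause from $\unit$ being $f$-local.
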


\begin{proof}
    It suffices to prove the lemma for each $f_i$, so we assume
    that we are given a single map $f : A \to B$.
    Let $J$ be a set and let $T : J \to \UU$ be a type family.
    Since $A$ is connected, we have $(A \to J) \simeq (\trunc{0}{A} \to J) \simeq (1 \to J) \simeq J$,
    so we deduce that $(A \to \sum_j T_j) \simeq \sm{\alpha : A \to J} \prd{a : A} T_{\alpha(a)} \simeq \sum_j (A \to T_j)$.
    The same is true for $B$, so
    we can factor the map $\precomp{f} : (B \to \sum_j T_j) \to (A \to \sum_j T_j)$ as
    \[  \textstyle
        \left(B \to \sum_j T_j \right) \simeq \sum_j (B \to T_j) \simeq \sum_j (A \to T_j) \simeq \left(A \to \sum_j T_j\right),
    \]
    which shows that $\sum_j T_j$ is $f$-local.

    Since $\unit$ is $f$-local, a special case is that $\sum_j \unit \simeq J$ is $f$-local.
\end{proof}

\begin{cor}
    Let $f : \prd{i:I} A_i \to B_i$ be a family of maps between connected types.
    Then $f$-localization preserves coproducts.
\end{cor}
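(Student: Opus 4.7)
The plan is to show that, given a set $J$ and a family $X : J \to \UU$, the canonical map
$
\sigma : \sum_{j:J} X_j \to \sum_{j:J} L_f X_j
$
defined by $(j, x) \mapsto (j, \eta_{X_j}(x))$ is an $L_f$-localization. This immediately yields $L_f\bigl(\sum_j X_j\bigr) \simeq \sum_j L_f X_j$, which is the meaning of preserving coproducts.

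First I would verify that the codomain $\sum_{j:J} L_f X_j$ is $f$-local. Each $L_f X_j$ is $f$-local by construction, and the preceding \cref{lemma:setsarelocal} says that a coproduct of $f$-local types (indexed by a set) is again $f$-local, since the indexing family $f$ is between connected types. So $\sum_j L_f X_j$ sits in the subuniverse of $f$-local types.

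Next I would check the universal property. Fix an $f$-local type $Y$. The map $\precomp{\sigma} : \bigl(\sum_j L_f X_j \to Y\bigr) \to \bigl(\sum_j X_j \to Y\bigr)$ fits into a commutative square
\[
\begin{tikzcd}[column sep=large]
\bigl(\sum_j L_f X_j \to Y\bigr) \arrow[r,"\precomp{\sigma}"] \arrow[d,"\simeq"'] & \bigl(\sum_j X_j \to Y\bigr) \arrow[d,"\simeq"] \\
\prd{j:J} (L_f X_j \to Y) \arrow[r,"\prd{j} \precomp{\eta_{X_j}}"'] & \prd{j:J} (X_j \to Y)
\end{tikzcd}
\]
whose vertical equivalences are the usual currying for maps out of a dependent sum. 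The bottom map is a dependent product of equivalences, each being an instance of the universal property of the $L_f$-localization $\eta_{X_j}$ applied to the $f$-local type $Y$. Hence the bottom map, and therefore $\precomp{\sigma}$, is an equivalence. Combined with the first step, this shows $\sigma$ is an $L_f$-localization.

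There is no real obstacle here: the entire argument reduces to combining \cref{lemma:setsarelocal} with the standard currying equivalence. The essential content sits in the previous lemma, where the connectedness of $A_i$ and $B_i$ was used to turn maps out of $\sum_j T_j$ into a sum indexed by $J$; this corollary is then just the packaging statement.
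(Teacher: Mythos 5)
Your proposal is correct and follows essentially the same route as the paper: both verify that $\sum_j L_f X_j$ is $f$-local via \cref{lemma:setsarelocal} and then establish the universal property through the currying equivalences $(\sum_j X_j \to Y) \simeq \prod_j (X_j \to Y) \simeq \prod_j (L_f X_j \to Y) \simeq (\sum_j L_f X_j \to Y)$ for an $f$-local $Y$. The only difference is that you make explicit, via the commuting square, that the composite equivalence really is precomposition with the canonical map, which the paper leaves implicit.
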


\begin{proof}
    Let $J$ be a set and let $S : J \to \UU$ be a type family.
    Consider the coproduct $\sum_j S_j$.
    By \cref{lemma:setsarelocal}, $\sum_j L_f S_j$ is $f$-local. We claim that the natural
    map $\sum_j S_j \to \sum_j L_f S_j$ is a localization.
    Let $Y$ be $f$-local. Then we have
    \[  \textstyle
        \Bigg( \bigg( \sum_j S_j \bigg) \to Y \Bigg) \simeq \prod_j (S_j \to Y) \simeq
        \prod_j (L_f S_j \to Y) \simeq \Bigg( \bigg( \sum_j L_f S_j \bigg) \to Y \Bigg).\qedhere
    \]
\end{proof}

\medskip

Our next goal is to give conditions under which $f$-localization
preserves $n$-connected\-ness.

\begin{prop}\label{prop:Kequiv-Lf}
Let $K$ be a reflective subuniverse and let $f$ be a family of $K$-equiva\-lences.
Then $\eta : X \to L_f X$ is a $K$-equivalence for every $X$.
\end{prop}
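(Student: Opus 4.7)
The plan is to show that the subuniverse of $K$-local types sits inside the subuniverse of $f$-local types, and then to invoke \cref{lemma:comparelocalization}(1) to transfer the fact that $\eta : X \to L_f X$ is an $L_f$-equivalence down to being a $K$-equivalence.

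First I would verify that every $K$-local type is $f$-local. Fix a $K$-local type $Z$ and an index $i : I$. By hypothesis, $f_i : A_i \to B_i$ is a $K$-equivalence, so by \cref{lemma:characterizationorthogonal} (the implication (1) $\Rightarrow$ (2) applied to the reflective subuniverse $K$), the precomposition map
\[
  \precomp{f_i} : (B_i \to Z) \lra (A_i \to Z)
\]
is an equivalence. Since this holds for every $i : I$, the type $Z$ is $f$-local. Therefore $K$ is contained in $L_f$ in the sense of \cref{lemma:comparelocalization}.

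Now I would conclude the proof as follows. For any type $X$, the unit $\eta : X \to L_f X$ is tautologically an $L_f$-equivalence (its $L_f$-image is the identity of $L_f X$, up to the natural homotopy). Since $K$ is contained in $L_f$, \cref{lemma:comparelocalization}(1) immediately gives that $\eta$ is also a $K$-equivalence.

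No step looks like a serious obstacle here; the only mild subtlety is bookkeeping the direction of the containment "$K \subseteq L_f$'', which flows in the correct direction precisely because the $K$-equivalence hypothesis on $f$ forces every $K$-local type to satisfy the orthogonality condition defining $L_f$-locality.
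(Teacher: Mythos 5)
Your proof is correct and is essentially the paper's own argument: both first use \cref{lemma:characterizationorthogonal} to see that every $K$-local type is $f$-local (so $K$ is contained in $L_f$), and then apply \cref{lemma:comparelocalization}(1) to the $L_f$-equivalence $\eta : X \to L_f X$ to conclude it is a $K$-equivalence. The extra bookkeeping you spell out is exactly what the paper leaves implicit.
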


\begin{proof}
Since each $f_i$ is a $K$-equivalence, every $K$-local type is $f$-local,
by \cref{lemma:characterizationorthogonal}.
Therefore, $\eta : X \to L_f X$ is a $K$-equivalence
by \cref{lemma:comparelocalization}(1).
\end{proof}

\begin{cor}\label{cor:preserve-n-connected}
For $n \geq -1$, let $f$ be a family of maps such that each $\trunc{n}{f_i}$ is an equivalence.
If $X$ is $n$-connected, then $L_f X$ is $n$-connected.
\end{cor}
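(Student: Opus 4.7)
The plan is to apply \cref{prop:Kequiv-Lf} with $K$ taken to be the reflective subuniverse of $n$-truncated types. Under this choice, a map $g$ is a $K$-equivalence precisely when $\trunc{n}{g}$ is an equivalence, so the hypothesis that each $\trunc{n}{f_i}$ is an equivalence says exactly that $f$ is a family of $K$-equivalences.

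Applying \cref{prop:Kequiv-Lf} then yields that $\eta : X \to L_f X$ is a $K$-equivalence, i.e.~that $\trunc{n}{\eta} : \trunc{n}{X} \to \trunc{n}{L_f X}$ is an equivalence. Since $X$ is $n$-connected, $\trunc{n}{X}$ is contractible, and transporting contractibility along this equivalence shows that $\trunc{n}{L_f X}$ is contractible as well, which is precisely the statement that $L_f X$ is $n$-connected.

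The only thing worth double-checking is the identification of $K$-equivalences with maps that become equivalences after $n$-truncation, but this is immediate from the definition of $K$-equivalence (the reflector for the subuniverse of $n$-truncated types is $n$-truncation). There is no real obstacle here; the corollary is essentially a one-line consequence of the preceding proposition once the correct choice of $K$ is made.
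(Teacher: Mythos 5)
Your proof is correct and is essentially identical to the paper's: both take $K$ to be the subuniverse of $n$-truncated types, apply \cref{prop:Kequiv-Lf} to conclude that $\eta : X \to L_f X$ is a $K$-equivalence, and then transfer contractibility of $\trunc{n}{X}$ to $\trunc{n}{L_f X}$.
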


\begin{proof}
Let $K$ be the subuniverse of $n$-truncated types.
Then each $f_i$ is a $K$-equivalence, by assumption.
It follows from \cref{prop:Kequiv-Lf} that $X \to L_f X$ is a $K$-equivalence.
Thus, if $\trunc{n}{X}$ is contractible, so is $\trunc{n}{L_f X}$.
\end{proof}

\subsection{Orthogonal factorization systems}\label{ss:orthogonal-factorization-systems}

In this section, we strengthen \cref{cor:preserve-n-connected}, using
the framework of orthogonal factorization systems.
While we do not need this generalization in the rest of the paper,
the stronger results will likely be of independent interest.

Roughly speaking, an \define{orthogonal factorization system} consists of
classes $\LLL$ and $\RRR$ of maps such that every map factors uniquely as
$r \circ l$, with $l$ in $\LLL$ and $r$ in $\RRR$.
See~\cite{RSS} for a detailed account of orthogonal factorization systems in type theory.
The reader not familiar with orthogonal factorization systems can assume
that $\LLL$ is the class of $n$-connected maps
and $\RRR$ is the class of $n$-truncated maps for some $n \geq -2$;
this case is treated in~\cite[Section 7.6]{hottbook}.
(For example, when $n = -1$, $\LLL$ consists of the surjective maps
and $\RRR$ consists of the embeddings.)

\begin{lem}\label{lemma:subtypes}
    Let $(\LLL, \RRR)$ be an orthogonal factorization system and
    let $f : \prd{i:I} A_i \to B_i$ be a family of maps in $\LLL$.
    If $r : S \to X$ is in $\RRR$ and $X$ is $f$-local, then $S$ is $f$-local.
    In particular, if each $f_i$ is surjective,
    then any subtype of an $f$-local type is $f$-local.
\end{lem}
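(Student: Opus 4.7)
The plan is to use the defining diagonal-filler property of the orthogonal factorization system. Recall that $f_i \in \LLL$ being orthogonal to $r \in \RRR$ means exactly that for every commuting square with $f_i$ on the left and $r$ on the right, the space of diagonal fillers is contractible. I will use this together with $f$-locality of $X$ to produce contractible fibers for $\precomp{f_i} : (B_i \to S) \to (A_i \to S)$, which is what we need.

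Fix $i : I$ and $g : A_i \to S$. I want to show that the type $P(g) \defeq \sm{h:B_i \to S} h \circ f_i = g$ is contractible. Consider the map
\[
  P(g) \lra \Big(\sm{\tilde h : B_i \to X} \tilde h \circ f_i = r\circ g\Big), \qquad (h,p) \longmapsto (r\circ h,\,\ap{r}\circ p).
\]
The codomain is contractible because $X$ is $f_i$-local, so it is equivalent to $\precomp{f_i}$ being an equivalence for $X$. Hence $P(g)$ is equivalent to the fiber of this map over the unique extension $\tilde h : B_i \to X$ of $r \circ g$ along $f_i$. Unfolding, this fiber is precisely the type of diagonal fillers for the square
\[
  \begin{tikzcd}
    A_i \arrow[r,"g"] \arrow[d,swap,"f_i"] & S \arrow[d,"r"] \\
    B_i \arrow[r,swap,"\tilde h"] & X
  \end{tikzcd}
\]
which is contractible by the orthogonality $f_i \perp r$. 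Therefore $P(g)$ is contractible, so $\precomp{f_i}$ is an equivalence and $S$ is $f_i$-local. Since this works for every $i$, $S$ is $f$-local.

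For the ``in particular'' clause, assume each $f_i$ is surjective. Then each $f_i$ lies in the left class of the surjection/embedding orthogonal factorization system (as in \cite[Section~7.6]{hottbook}), and a subtype inclusion $S \hookrightarrow X$ is by definition an embedding, hence in the right class. The first part of the lemma then applies directly.

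The routine verification is mainly bookkeeping for the pullback square presenting $P(g)$ as the total space of a family of diagonal-filler types over the contractible type of extensions to $X$; the main conceptual input is just the orthogonality $\LLL \perp \RRR$ combined with $f$-locality of $X$, so no substantial obstacle is expected.
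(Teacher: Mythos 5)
Your proof is correct and uses essentially the same approach as the paper: both arguments rest on the orthogonality of $f_i \in \LLL$ against $r \in \RRR$ together with $f$-locality of $X$, the paper packaging this as the pullback square of mapping types (citing~\cite[Lemma~1.44]{RSS}) and pulling back the equivalence $\precomp{f_i} : (B_i \to X) \to (A_i \to X)$, while you unfold the same data fiberwise by identifying each fiber of $\precomp{f_i} : (B_i \to S) \to (A_i \to S)$ with a contractible type of diagonal fillers. Your handling of the ``in particular'' clause via the surjection/embedding factorization system is also exactly what the lemma intends.
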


\begin{proof}
    Suppose that $X$ is $f$-local
    and that $r : S \to X$ is in $\RRR$.
    Fix $i : I$.
    Since $f_i$ is in $\LLL$ and $r$ is in $\RRR$, the following square is a pullback:
    \[
        \begin{tikzpicture}
          \matrix (m) [matrix of math nodes,row sep=2em,column sep=3em,minimum width=2em]
          { (B_i \sto S) & (A_i \sto S) \\
            (B_i \sto X) & (A_i \sto X) . \\};
          \path[->]
            (m-1-1) edge node [above] {$\precomp{f_i}$} (m-1-2)
                    edge node [left] {$r \circ \blank$} (m-2-1)
            (m-1-2) edge node [right] {$r \circ \blank$} (m-2-2)
            (m-2-1) edge node [below] {$\precomp{f_i}$} (m-2-2)
            ;
        \end{tikzpicture}
    \]
    This follows from~\cite[Lemma~1.44]{RSS}, which says that the fibers of
    the map comparing $B_i \sto S$ to the pullback are contractible.
    The bottom arrow is an equivalence since $X$ is $f$-local,
    so the top arrow is an equivalence, as required.
\end{proof}

\begin{thm}\label{theorem:localizationpreservesconnected}
    Let $(\LLL, \RRR)$ be an orthogonal factorization system and
    let $f$ be a family of maps in $\LLL$.
    Then $\eta : X \to L_f X$ is in $\LLL$ for every $X$.
\end{thm}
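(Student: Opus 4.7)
The plan is to apply the $(\LLL,\RRR)$-factorization to the unit $\eta_X : X \to L_f X$ itself, and then show that the right factor must be an equivalence. Write the factorization as
\[
  X \xrightarrow{\ l\ } Y \xrightarrow{\ r\ } L_f X,
\]
with $l \in \LLL$ and $r \in \RRR$. Since each $f_i$ lies in $\LLL$ and $L_f X$ is $f$-local, \cref{lemma:subtypes} applied to $r$ shows that $Y$ is also $f$-local. Hence, by the universal property of $L_f$-localization, the map $l : X \to Y$ extends uniquely through $\eta_X$ to a map $\tilde l : L_f X \to Y$ with $\tilde l \circ \eta_X = l$.

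The next step is to show that $r$ is a (quasi-)inverse to $\tilde l$. For one composite, observe that $r \circ \tilde l : L_f X \to L_f X$ satisfies $(r \circ \tilde l) \circ \eta_X = r \circ l = \eta_X$, so by the uniqueness clause of the universal property applied to the $f$-local type $L_f X$ we conclude $r \circ \tilde l = \idfunc[L_f X]$. For the other composite, I would use the orthogonality of $l$ and $r$: consider the commutative square
\[
  \begin{tikzcd}
    X \arrow[r,"l"] \arrow[d,swap,"l"] & Y \arrow[d,"r"] \\
    Y \arrow[r,swap,"r"] & L_f X.
  \end{tikzcd}
\]
Both $\idfunc[Y]$ and $\tilde l \circ r$ serve as diagonal fillers (the latter using $(\tilde l \circ r) \circ l = \tilde l \circ \eta_X = l$ and $r \circ (\tilde l \circ r) = \idfunc \circ r = r$). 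Since $l \in \LLL$ is left-orthogonal to $r \in \RRR$, the filler is unique, so $\tilde l \circ r = \idfunc[Y]$.

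Thus $r$ is an equivalence, and since $\LLL$ contains all equivalences and is closed under composition (standard properties of any orthogonal factorization system), the map $\eta_X = r \circ l$ lies in $\LLL$, as desired. I expect the main conceptual step to be verifying that the middle object $Y$ of the factorization is $f$-local, which is exactly what \cref{lemma:subtypes} was designed to give; the rest is a formal manipulation with the universal property of localization together with the uniqueness of $(\LLL,\RRR)$-diagonal fillers.
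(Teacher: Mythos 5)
Your proof is correct and follows essentially the same route as the paper: factor $\eta$ as $r\circ l$, use \cref{lemma:subtypes} to see the middle object is $f$-local, and deduce $r$ is an equivalence via the universal property of $L_f X$ together with the uniqueness of the factorization. The only cosmetic difference is that you verify $\tilde l\circ r = \idfunc$ explicitly via uniqueness of diagonal fillers, where the paper appeals to the same fact by noting that $\overline{l}\circ r$ is an automorphism respecting the factorization.
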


\begin{proof}
    Factor the unit map as $X \xrightarrow{l} I \xrightarrow{r} L_f X$, with $l$ in $\LLL$
    and $r$ in $\RRR$.
    By \cref{lemma:subtypes},
    $I$ is $f$-local. This gives us a map $\overline{l} : L_f X \to I$
    such that $r \circ \overline{l} \circ \eta = r \circ l = \eta$.
    By the universal property of $L_f X$, it must be the case that $r \circ \overline{l}$
    is homotopic to the identity of $L_f X$.
    Similarly, $\overline{l} \circ r$ is an automorphism of $I$ that respects the
    factorization, so it must be homotopic to the identity of $I$.
    Thus $r$ is an equivalence and so $\eta = r \circ l$ is in $\LLL$.
\end{proof}

\begin{cor}\label{cor:preserve-n-connected-stronger}
    For $n \geq -1$, let $f$ be a family of $(n-1)$-connected maps.
    If $X$ is $n$-connected, then $L_f X$ is $n$-connected.
\end{cor}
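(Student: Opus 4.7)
My plan is to deduce this corollary from \cref{theorem:localizationpreservesconnected} combined with the $2$-out-of-$3$ principle in \cref{proposition:2outof3}. The key observation is that the $(n-1)$-connected maps and the $(n-1)$-truncated maps form an orthogonal factorization system $(\LLL,\RRR)$, so \cref{theorem:localizationpreservesconnected} directly applies.

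First I would invoke \cref{theorem:localizationpreservesconnected} with this choice of $(\LLL,\RRR)$. Since by hypothesis each $f_i$ is $(n-1)$-connected, i.e., belongs to $\LLL$, the theorem gives that the unit $\eta : X \to L_f X$ is $(n-1)$-connected. This is already close to what we want, but only at one truncation level below, so an additional step is needed to bridge the gap using the hypothesis that $X$ is $n$-connected.

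For that second step, I would take $L$ to be the $(n-1)$-truncation reflective subuniverse; by \cref{example:truncationisseparated}, the associated $L'$ is the $n$-truncation reflective subuniverse. Consider the composite
\[
  X \xrightarrow{\ \eta\ } L_f X \lra \unit.
\]
Since $X$ is $n$-connected, the composite $X \to \unit$ is $L'$-connected. From Step~1, $\eta$ is $L$-connected. Applying \cref{proposition:2outof3} then yields that $L_f X \to \unit$ is $L'$-connected, which is exactly the statement that $L_f X$ is $n$-connected.

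I don't anticipate a real obstacle: the substantive work is packaged in \cref{theorem:localizationpreservesconnected} and \cref{proposition:2outof3}. The main conceptual point is simply to notice that the truncation pair $(L,L')$ at levels $(n{-}1,n)$ is exactly calibrated to turn an $(n{-}1)$-connected unit into an $n$-connected localization once the domain is assumed $n$-connected.
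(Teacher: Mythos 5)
Your proposal is correct and is essentially the paper's own argument: apply \cref{theorem:localizationpreservesconnected} with $\LLL$ the $(n-1)$-connected maps and $\RRR$ the $(n-1)$-truncated maps to get that $\eta : X \to L_f X$ is $(n-1)$-connected, then conclude via \cref{proposition:2outof3} with $L$ the $(n-1)$-truncation (so $L'$ is $n$-truncation). No gaps to report.
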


\begin{proof}
    Applying \cref{theorem:localizationpreservesconnected} with
    $\LLL$ the $(n-1)$-connected maps and $\RRR$ the $(n-1)$-truncated maps
    gives that $\eta : X \to L_f X$ is $(n-1)$-connected.
    Using the forward implication of \cref{proposition:2outof3} with $L$ being $(n-1)$-truncation,
    we see that when $X$ is $n$-connected, so is $L_f X$.
\end{proof}

\cref{prop:Kequiv-Lf} and \cref{theorem:localizationpreservesconnected} are very similar.
To compare them in a concrete situation, take $K$ to be the subuniverse of $n$-truncated
types and consider the orthogonal factorization system in which $\LLL$ is the
class of $n$-connected maps.
If a map $g$ is $n$-connected, then $Kg = \trunc{n}{g}$ is an equivalence, so
\cref{theorem:localizationpreservesconnected} makes a stronger assumption on $f$
and gives a stronger conclusion about $\eta : X \to L_f X$.
Thus neither result implies the other.

On the other hand, \cref{cor:preserve-n-connected-stronger} has the same
conclusion as \cref{cor:preserve-n-connected}, but makes a weaker hypothesis,
since if $\trunc{n}{f}$ is an equivalence then $f$ is $(n-1)$-connected.
This is strictly weaker.  For example, if $n = 0$, the former is the
condition that $f$ is a bijection on components, while the latter is
the condition that $f$ is surjective on components.

\section{Localization away from sets of numbers}\label{section:localizationaway}

In this section, we focus on localization with respect to the \define{degree $k$
map} $\degg(k) : \sphere{1} \to \sphere{1}$, for $k : \N$, and with respect to
families of such maps.
The degree $k$ map is defined by circle induction by mapping $\base$ to $\base$
and $\lloop$ to $\lloop^k$.
Using suggestive language that mirrors the algebraic case, we call $L_{\degg(k)}$
``localization away from $k$'' and say that we are ``inverting $k$.''
We note that, for many applications, one considers localization away from $k$ for $k$ prime.
However, for the results of this section, $k$ can be any natural number.

As might be expected, $\degg(k)$-localizations can be combined to localize a type \emph{at}
a prime $p$, by localizing with respect to the family of maps $\degg(q)$ indexed by all
primes $q$ different from $p$.
With this case in mind, we study localization with respect to any family of degree $k$ maps
indexed by a map $S : A \to \N$ for some type $A$.
In other words, we localize with respect to the family $\degg\circ S : A \to \sphere{1} \to \sphere{1}$.
We denote the family $\degg\circ S$ by $\degg(S)$.

The main goal of this section is to show that, for simply connected types,
$\degg(S)$-localization localizes the homotopy groups away from $S$ (as long as $S$ indexed by $\N$),
as is true in the classical setting.

We begin with a discussion of localization of groups in \cref{ss:localizationofgroups}, which we need in order to describe the effect of localization on homotopy groups.

In \cref{ss:n-conn}, we give characterizations of $\degg(S)$-localization for highly connected types. In particular, we show that the localization of a simply connected type can be computed as the nullification with respect to $M_S$, the family of cofibers of the maps $\degg(S(a))$, or as the localization with respect to $\susp \degg(S)$, the family of suspensions of these maps.
We prove that the $\degg(S)$-localization of a pointed, $(n-1)$-connected type inverts $S$ in $\pi_n$,
and deduce from this that every group has a localization away from $S$.

These characterizations of $\degg(S)$-localization are of interest to us for two reasons. First, the observation that $\degg(S)$-localization can be computed via a nullification implies that, restricted to simply connected types, $\degg(S)$-localization is a modality and therefore is better behaved than an arbitrary localization. Second, the fact that $\degg(S)$-localization can be computed as $\susp \degg(S)$-localization allows us to deduce that, for simply connected types, $\degg(S)$-localization commutes with taking loop spaces, a fact we use in the next section.

In \cref{ss:localizingKgn}, we give a method for computing the $\degg(S)$-localization of a loop space via a mapping telescope construction. This allows us to show that the localization of an Eilenberg-Mac Lane space $K(G,n)$, for $n \geq 1$ and $G$ abelian, is $K(G',n)$, where $G'$ is the algebraic localization of $G$ away from $S$.

In \cref{ss:localization-of-homotopy-groups}, we combine the results of \cref{ss:localizingKgn} and observations about the interaction between $\degg(S)$-localization and truncation to show that localizing a simply connected type at $\degg(S)$ localizes all of the homotopy groups away from $S$.
The results of the last two sections assume that the family $S$ is indexed by the natural numbers.

Finally, in \cref{ss:abelian} we give a direct proof of the fact that
the localization of an abelian group in the category of groups coincides with
its localization in the category of abelian groups, which also follows from
\cref{theorem:localizationKgn}.
This section is not needed in the rest of the paper.

\subsection{Localization of groups}\label{ss:localizationofgroups}

We fix a family $S : A \to \N$ of natural numbers and consider the family
of maps $\degg(S) : A \to \sphere{1} \to \sphere{1}$ sending $a$ to $\degg(S(a))$.

We begin by defining the collection of groups that will be the
``local'' groups for our algebraic localization.
We make the standard assumption that the underlying type of a group is a set.

\begin{defn}
    For $k : \N$, a group $G$ is \define{uniquely $k$-divisible} if the $k$-th power map
    $g \mapsto g^k$ is a bijection.
    A group $G$ is \define{uniquely $S$-divisible} if it is uniquely $S(a)$-divisible for every $a : A$.
\end{defn}

When the group is additive, the $k$-th power map is usually called the ``multiplication by $k$ map.''
Notice that for non-abelian groups the map is not a group homomorphism in general.

Our first goal is to show that the homotopy groups of a $\degg(S)$-local type
are uniquely $S$-divisible.
In order to do this, we begin by characterizing the $\degg(S)$-local types.
For a type $X$, a point $x : X$ and a natural number $k$, we write
$k : \loopspace{X,x} \to \loopspace{X,x}$ for the map sending $\omega$ to $\omega^{S(a)}$,
and we call this map the \define{$k$-fold map}.

\begin{lem}
A type $X$ is $\degg(S)$-local if and only if, for each $x:X$ and $a:A$, the map
$S(a) : \loopspace{X,x} \to \loopspace{X,x}$ is an equivalence.
\end{lem}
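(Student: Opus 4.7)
The plan is to apply \cref{lemma:pointed}. Since each $\degg(S(a)) : \sphere{1} \to \sphere{1}$ is a pointed map between pointed types (with basepoint $\base$), that lemma tells us $X$ is $\degg(S)$-local if and only if, for every $x : X$ and $a : A$, the precomposition map
\[
  \precomp{\degg(S(a))} : (\sphere{1} \pto X) \lra (\sphere{1} \pto X)
\]
is an equivalence.

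The next step is to identify $(\sphere{1} \pto X)$ at basepoint $x$ with $\loopspace{X,x}$. Pointed maps out of the circle correspond to loops at the basepoint: a pointed map $\sphere{1} \pto X$ (with $\base \mapsto x$) is determined by its value on $\lloop$, giving an equivalence $(\sphere{1} \pto X) \simeq \loopspace{X,x}$. This is standard from the recursion principle of $\sphere{1}$.

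Under this identification, I would show that precomposition with $\degg(k)$ corresponds to the $k$-fold map $\omega \mapsto \omega^k$ on $\loopspace{X,x}$. This follows directly from the definition of $\degg(k)$ by circle induction: it sends $\base$ to $\base$ and $\lloop$ to $\lloop^k$, so if a pointed map $g : \sphere{1} \pto X$ corresponds to the loop $\omega$, then $g \circ \degg(k)$ sends $\lloop$ to $\ap{g}(\lloop^k) = \ap{g}(\lloop)^k = \omega^k$.

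Combining these two steps, precomposition with $\degg(S(a))$ on $(\sphere{1} \pto X)$ is an equivalence precisely when the $S(a)$-fold map on $\loopspace{X,x}$ is an equivalence, which yields the claim. The argument is essentially a routine unwinding; the only subtle point is making the naturality of the identification $(\sphere{1} \pto X) \simeq \loopspace{X,x}$ with respect to precomposition by $\degg(k)$ precise, but this reduces immediately to the recursion principle of $\sphere{1}$ applied to $\lloop^k$.
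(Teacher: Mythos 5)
Your proposal is correct and follows essentially the same route as the paper: apply \cref{lemma:pointed}, use the universal property of the circle to identify $(\sphere{1} \pto X)$ with $\loopspace{X,x}$, and check that under this identification precomposition with $\degg(S(a))$ becomes the $S(a)$-fold map. The paper phrases the last step as the commutativity of a square, which is exactly the naturality point you spell out via $\ap{g}(\lloop^k) = \ap{g}(\lloop)^k$.
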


\begin{proof}
We apply \cref{lemma:pointed}.
Let $x:X$ and $a:A$.
By the universal property of the circle, we have an equivalence
$(\sphere{1} \pto X) \eqvsym \loopspace{X,x}$,
and it is easy to see that the square
\[
  \begin{tikzcd}
    (\sphere{1} \pto X) \arrow[r,"\simeq"] \arrow[d,swap,"\precomp{\degg(S(a))}"] & \loopspace{X,x} \arrow[d,"S(a)"] \\
    (\sphere{1} \pto X) \arrow[r,swap,"\simeq"] & \loopspace{X,x}
  \end{tikzcd}
\]
commutes.
So we conclude that $\precomp{\degg(S(a))}$ is an equivalence if and only if $S(a)$ is an equivalence.
\end{proof}

\begin{prop}\label{prop:homotopygroupsoflocalarelocal}
    If $X$ is a pointed, $\degg(S)$-local type, then $\pi_{m+1}(X)$ is uniquely $S$-divisible
    for each $m : \N$.
    Conversely, if for some $n \geq 0$, $X$ is a pointed, simply connected $n$-type
    such that $\pi_{m+1}(X)$ is uniquely $S$-divisible for each $m : \N$,
    then $X$ is $\degg(S)$-local.
\end{prop}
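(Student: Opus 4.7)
The plan is to apply the preceding lemma, which characterizes $\degg(S)$-local types as those for which, for every basepoint $x : X$ and every $a : A$, the $S(a)$-fold self-map of $\loopspace{X,x}$ is an equivalence. Both directions then reduce to translating ``the $S(a)$-fold map on some loop space is an equivalence'' into ``the $S(a)$-fold power is a bijection on the corresponding homotopy group.''

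For the forward direction, since any reflective subuniverse is closed under identity types, every iterated loop space $\loopspacesym^m X$ is again $\degg(S)$-local. Applying the preceding lemma to $\loopspacesym^m X$ at its canonical basepoint yields that the $S(a)$-fold map on $\loopspacesym^{m+1} X$ is an equivalence; passing to $\trunc{0}{-}$ produces a bijection on $\pi_{m+1}(X) = \trunc{0}{\loopspacesym^{m+1} X}$ which is exactly the $S(a)$-fold power map of the group. Hence every $\pi_{m+1}(X)$ is uniquely $S$-divisible.

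For the converse, fix $a : A$. Since $X$ is connected, \cref{lemma:pointed} reduces the task to showing that $S(a) : \loopspace{X} \to \loopspace{X}$ is an equivalence at one basepoint. As $X$ is a simply connected $n$-type, $\loopspace{X}$ is a pointed, connected, $(n-1)$-truncated type. The Whitehead principle for truncated types (\cite[Theorem~8.8.3]{hottbook}) then reduces the claim to checking that $S(a)$ induces an isomorphism on every $\pi_k$: on $\pi_0$ this is automatic from connectedness, while for $k \geq 1$ the induced map on $\pi_k(\loopspace{X}) = \pi_{k+1}(X)$ is multiplication by $S(a)$ in an abelian group, which is an isomorphism by hypothesis. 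The main subtlety is precisely this identification of the induced map on higher homotopy groups as multiplication by $S(a)$: intuitively, $\omega \mapsto \omega^{S(a)}$ is an $H$-map for loop concatenation and so loops to the $S(a)$-fold power on $\loopspacesym^2 X$, which by Eckmann--Hilton equals multiplication by $S(a)$, but making this rigorous requires a small explicit computation. An alternative would be induction on $n$, but that seems to demand either Postnikov decompositions or the Eilenberg--Mac~Lane analysis of \cref{ss:localizingKgn}, so the Whitehead-style approach is preferable as being more self-contained.
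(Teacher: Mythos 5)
Your proof is correct and follows essentially the same strategy as the paper's: characterize $\degg(S)$-locality via the $S(a)$-fold self-maps of loop spaces, pass to homotopy groups for the forward direction, and use the truncated Whitehead theorem on $\loopspacesym X$ for the converse. The one genuine (though small) difference is in the forward direction: the paper takes the equivalence $\omega\mapsto\omega^{S(a)}$ on $\loopspacesym X$ and applies $\loopspacesym^m$ to it, invoking Eckmann--Hilton to identify $\loopspacesym^m$ of the $S(a)$-fold map with the $S(a)$-fold map on $\loopspacesym^{m+1}X$; you instead use closure of reflective subuniverses under identity types to see that $\loopspacesym^m X$ is itself $\degg(S)$-local and apply the characterization lemma there directly, so that the $S(a)$-fold map on $\loopspacesym^{m+1}X$ $0$-truncates to the power map on $\pi_{m+1}(X)$ essentially by definition, with no interchange argument needed. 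Be aware, though, that in the converse you still need exactly the identification you defer as ``a small explicit computation'': that the map induced on $\pi_k(\loopspacesym X)=\pi_{k+1}(X)$ by the $S(a)$-fold map on $\loopspacesym X$ is the $S(a)$-th power map, i.e.\ that looping the $S(a)$-fold map gives the $S(a)$-fold map one level up --- this is precisely the Eckmann--Hilton step the paper carries out in its forward direction and then reuses in the converse, so your argument is complete at the same level of rigor as the paper's once that interchange is spelled out.
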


For the converse, the requirement that $X$ be truncated is needed because
we need to use Whitehead's theorem.
In fact the converse is also true, and the same proof works, if $X$ is only assumed to be hypercomplete (\cite[Definition~3.18]{RSS}). 

\begin{proof}
    Let $X$ be a pointed, $\degg(S)$-local type and let $a : A$.
\def\fold{\mathsf{pow}} %
    In this proof, we write $\fold(k, Y)$ for the $k$-fold map on $\loopspacesym Y$,
    where $k = S(a)$.
    For every $m : \N$,
    the map $\fold(k, X) : \loopspacesym X \to \loopspacesym X$ induces
    the map $\fold(k, \loopspacesym^m X) : \loopspacesym^m \loopspacesym X \to \loopspacesym^m \loopspacesym X$,
    by the Eckmann-Hilton argument~\cite[Theorem~2.1.6]{hottbook}.
    That is, $\loopspacesym^m \fold(k, X) = \fold(k, \loopspacesym^m X)$.
    It follows that $\loopspacesym^m \fold(k, X)$ induces the $k$-th power map on $\pi_{m+1}( X )$.
    Since $\fold(k, X)$ is an equivalence, the $k$-th power map on $\pi_{m+1}( X )$
    must be an equivalence for each $m$.
    In other words, $\pi_{m+1}( X )$ is uniquely $S$-divisible for each $m$.

    Conversely, suppose that, for some $n \geq 0$, $X$ is a pointed,
    simply connected $n$-type such that $\pi_{m+1}(X)$ is uniquely $S$-divisible
    for each $m : \N$.
    Let $a : A$ and let $k = S(a)$.
    Since $X$ is pointed and connected, it suffices to show that
    $\fold(k, X) : \loopspacesym X \to \loopspacesym X$ is an equivalence.
    By the above argument, $\fold(k, X)$ induces the $k$-th power map on each
    $\pi_{m+1}(X) = \pi_m(\loopspacesym X)$.
    By assumption, these $k$-th power maps are bijections.
    Since $\loopspacesym X$ is a pointed, connected $(n-1)$-type, it follows from the
    truncated Whitehead theorem~\cite[Theorem~8.8.3]{hottbook} that 
    $\fold(k, X)$ is an equivalence.
\end{proof}

Our next goal is to show that a group $G$ is uniquely $S$-divisible if and only
if the Eilenberg--Mac Lane space $K(G, 1)$ is $\degg(S)$-local.
For this, we use the following theorem.

\begin{thm}[{\cite[Theorem 5.1]{BuchholtzDoornRijke}}]\label{theorem:catofgroups}
    Given $n>1$, the pointed, $(n-1)$-connected, $n$-truncated types together with pointed maps
    form a univalent category, and this category is equivalent to the category of abelian groups
    and group homomorphisms.
    Similarly, when $n=1$ we have an equivalence between the category of pointed, $0$-connected,
    $1$-truncated types and the category of groups. \qed
\end{thm}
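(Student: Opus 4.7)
The plan is to construct the claimed equivalence via the functor $\pi_n$. First I would fix the relevant univalent category, call it $\mathcal{E}_n$: its objects are pointed, $(n-1)$-connected, $n$-truncated types, its hom-sets are $\trunc{0}{X \pto Y}$, and composition is inherited from composition of pointed maps. Univalence of $\mathcal{E}_n$ reduces to showing that the canonical map from identifications of objects to isomorphisms in the category is an equivalence; since ``$(n-1)$-connected'' and ``$n$-truncated'' are mere propositions, identifications between two such objects are, via univalence, pointed equivalences, and any isomorphism in $\trunc{0}{X \pto Y}$ can be lifted to a pointed equivalence by applying the truncated Whitehead theorem at level $n$ to any representative of the inverse class.

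Second, for essential surjectivity, I would construct an Eilenberg--Mac Lane space $K(G, n)$ for every group $G$, taken abelian when $n > 1$. For $n = 1$, the HoTT book's higher inductive type encoding the action groupoid of $G$ on itself does the job. For $n > 1$, I would proceed inductively by setting
\[
    K(G, n) \defeq \trunc{n}{\susp{K(G, n-1)}}
\]
and using the Freudenthal suspension theorem---made applicable by the fact that $K(G, n-1)$ is $(n-2)$-connected, so $\susp K(G, n-1)$ is $(n-1)$-connected---to identify $\loopspacesym K(G, n) \simeq K(G, n-1)$. Induction then yields $\pi_n K(G, n) \simeq G$.

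Third, for full faithfulness I would establish a natural equivalence
\[
    \trunc{0}{K(G, n) \pto Y} \simeq \Hom(G, \pi_n Y)
\]
for every object $Y$ of $\mathcal{E}_n$. The forward map sends a pointed map to its induced homomorphism on $\pi_n$; injectivity follows since $Y$ is $n$-truncated and $K(G,n)$ is $(n-1)$-connected, so a pointed map out of $K(G,n)$ is determined up to pointed homotopy by its effect on the generating loop, and surjectivity follows by assembling a homomorphism $G \to \pi_n Y$ into a map using the dependent elimination that $K(G, n)$ inherits from its construction as a truncated suspension. Combined with essential surjectivity, this gives that $\pi_n$ is fully faithful on all of $\mathcal{E}_n$. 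For $n > 1$, the Eckmann--Hilton argument forces $\pi_n X$ to be abelian, placing the equivalence in the category of abelian groups.

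The main obstacle I expect is the inductive construction and verification of $K(G, n)$ for $n > 1$: the identification $\loopspacesym \trunc{n}{\susp{K(G, n-1)}} \simeq K(G, n-1)$ relies essentially on Freudenthal and requires careful bookkeeping of connectivity and truncation simultaneously. The fully faithful step is also delicate: one must establish a universal property of $K(G, n)$ mapping into an arbitrary object of $\mathcal{E}_n$, not only into another Eilenberg--Mac Lane space, and one must avoid circularity, since the natural proof uses the dependent elimination principle of $K(G,n)$ built into its construction.
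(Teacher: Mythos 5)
First, a point of comparison: the paper does not prove this statement at all --- it is imported verbatim from Buchholtz--van Doorn--Rijke (their Theorem 5.1), so there is no in-paper argument to measure you against. Your outline instead reconstructs the result along the classical Licata--Finster route: take $\pi_n$ as the functor, build $K(G,n)$ for essential surjectivity, prove a universal property $\trunc{0}{K(G,n)\pto Y}\simeq \Hom(G,\pi_n Y)$ for full faithfulness, and use the truncated Whitehead theorem both for univalence of $\mathcal{E}_n$ and (implicitly) to identify every object with $K(\pi_n X,n)$. That decomposition is viable, and most of it is the standard argument.

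There is, however, a genuine gap in the essential-surjectivity step. The identification $\loopspacesym \trunc{n}{\susp{K(G,n-1)}} \simeq K(G,n-1)$ is \emph{not} ``made applicable by Freudenthal'' at the base case $n=2$: Freudenthal applied to the $0$-connected type $K(G,1)$ only gives that $K(G,1)\to\loopspacesym\susp{K(G,1)}$ is $2\cdot 0 = 0$-connected, while your induction needs it to be $(n-1)$-connected, i.e.\ $2(n-2)\geq n-1$, which holds only for $n\geq 3$. No connectivity argument can close this, because the statement at $n=2$ is simply false for non-abelian $G$ (one obtains the abelianization); this is exactly where the hypothesis that $G$ is abelian must enter, via the separate encode--decode argument for $K(G,2)$ in Licata--Finster that uses commutativity of $G$ in an essential way. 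Your sketch never records where abelianness is used in the construction, which is a symptom of this missing step. Two smaller glosses: ``determined by its effect on the generating loop'' is not meaningful for general $G$ and $n>1$ --- the clean route to the universal property is the adjunction $\trunc{0}{K(G,n)\pto Y}\simeq\trunc{0}{K(G,n-1)\pto\loopspacesym Y}$ together with induction down to the eliminator of the $K(G,1)$ higher inductive type; and passing from ``fully faithful into EM-spaces'' to ``fully faithful on all of $\mathcal{E}_n$'' requires observing that every object $X$ is merely equivalent to $K(\pi_n X,n)$ (build a map from your universal property and apply truncated Whitehead), which is legitimate since being fully faithful is a mere proposition, but it should be stated rather than folded into ``combined with essential surjectivity.''
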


The equivalence in \cref{theorem:catofgroups} is given by mapping a pointed,
$(n-1)$-connected, $n$-truncated type $X$ to $\pi_n( X )$,
and the inverse is given by mapping a group $G$ to the Eilenberg--Mac Lane
space $K(G,n)$, as constructed in~\cite{FinsterLicata}.
Moreover,~\cite{BuchholtzDoornRijke} shows that the inverse equivalence maps
short exact sequences of groups to fiber sequences of types.

\begin{cor}\label{corollary:characterizationpdivisible}
    For any $n > 1$, an abelian group $G$ is uniquely $S$-divisible if and only if its classifying space $K(G,n)$
    is $\degg(S)$-local. An arbitrary group $G$ is uniquely $S$-divisible if and only if $K(G,1)$ is $\degg(S)$-local.
\end{cor}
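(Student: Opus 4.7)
The proof will split into the cases $n > 1$ and $n = 1$, since \cref{prop:homotopygroupsoflocalarelocal} applies uniformly in the forward direction but only in the simply-connected case for the converse. In both cases, $K(G, n)$ is a pointed, $(n-1)$-connected, $n$-truncated type whose only nontrivial homotopy group is $\pi_n(K(G,n)) \cong G$.

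For the forward direction (local implies uniquely $S$-divisible), I apply the forward half of \cref{prop:homotopygroupsoflocalarelocal}: if $K(G,n)$ is $\degg(S)$-local, then every $\pi_{m+1}(K(G,n))$ is uniquely $S$-divisible, and taking $m = n-1$ isolates $G$ itself. For the converse when $n > 1$, the space $K(G,n)$ is simply connected (since $n \geq 2$) and $n$-truncated, and each $\pi_{m+1}(K(G,n))$ is either trivial or isomorphic to the uniquely $S$-divisible group $G$; the converse half of \cref{prop:homotopygroupsoflocalarelocal} then directly yields $\degg(S)$-locality.

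The main obstacle is the converse for $n = 1$, since $K(G,1)$ is connected but not simply connected, so \cref{prop:homotopygroupsoflocalarelocal} cannot be invoked. Here I fall back on the lemma immediately preceding \cref{prop:homotopygroupsoflocalarelocal}, which characterizes a type $X$ as $\degg(S)$-local exactly when, for each $a : A$ and base point $x : X$, the $S(a)$-power map $\loopspace{X,x} \to \loopspace{X,x}$ is an equivalence. Since $K(G,1)$ is connected, \cref{lemma:pointed} reduces this to checking at the canonical base point. Under the standard group isomorphism $\loopspacesym K(G,1) \simeq G$ (with loop concatenation corresponding to the group operation), the $S(a)$-power map on the loop space corresponds to the $S(a)$-power map on $G$, which is a bijection by the unique $S$-divisibility hypothesis.
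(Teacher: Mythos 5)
Your proof is correct, and it reaches the same essential fact as the paper — that $\degg(k)$-locality of $K(G,n)$ amounts to the $k$-th power map on $G$ being a bijection — but it is organized differently. The paper's proof is a uniform one-liner for all $n \geq 1$: using the unlabeled lemma characterizing $\degg(S)$-local types via $k$-fold maps on loop spaces together with the equivalence of categories in \cref{theorem:catofgroups}, it observes that precomposition with $\degg(k)$ corresponds to the $k$-th power map on $G$, so one is an equivalence exactly when the other is. You instead split into cases: for $n > 1$ you invoke both halves of \cref{prop:homotopygroupsoflocalarelocal} (the paper itself remarks after the corollary that the $n>1$ case follows this way), which imports a use of the truncated Whitehead theorem in the converse; for $n = 1$ your argument — connectedness plus \cref{lemma:pointed} reduces locality to the $S(a)$-fold map on $\loopspacesym K(G,1) \simeq G$ at the base point, which is the $S(a)$-th power map on $G$ — is essentially the paper's uniform argument specialized to $n=1$, and it correctly recognizes that the converse of \cref{prop:homotopygroupsoflocalarelocal} is unavailable there for lack of simple connectivity. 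The paper's route is a bit more economical (no case split, no appeal to Whitehead beyond what is already inside \cref{theorem:catofgroups}), while yours makes explicit exactly where simple connectivity is and is not needed; both are sound.
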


\begin{proof}
    As argued in the proof of \cref{prop:homotopygroupsoflocalarelocal}, $\degg(k)$ induces the $k$-th power map under the equivalence of categories, so one morphism is an equivalence if and only if the other is.
\end{proof}

Note that the $n > 1$ case of \cref{corollary:characterizationpdivisible} also follows
immediately from \cref{prop:homotopygroupsoflocalarelocal}.

Now we define what it means to localize a group away from $S$.
This concept is needed to state our main result, \cref{theorem:localizationlocalizes}.

\begin{defn}\label{def:localizationofgroups}
    Given a group $G$, a homomorphism $\eta : G \to G'$ is a \define{localization of $G$ away from $S$
    in the category of groups} if $G'$ is uniquely $S$-divisible and for every group $H$
    which is uniquely $S$-divisible, the precomposition map $\precomp{\eta} : \Hom(G', H) \to \Hom(G, H)$ is an equivalence.
    If $G$ and $G'$ are abelian and $\eta : G \to G'$ has the universal property with respect to uniquely
    $S$-divisible abelian groups, we say that $\eta$ is a \define{localization of $G$ away from $S$ in the category
    of abelian groups}.
\end{defn}

\begin{rmk}\label{rmk:localizationofgroups}
It will follow from \cref{lemma:localizationlocalizesfirst} that every group $G$ has a localization $G \to G'$
away from $S$, obtained by applying $\pi_1$ to the $\deg(S)$-localization of $K(G, 1)$.
Similarly, if $G$ is abelian, its localization in the category of abelian groups is obtained
by applying $\pi_2$ to the $\deg(S)$-localization of $K(G,2)$.
Moreover, by \cref{theorem:localizationKgn}, these algebraic localizations agree
when restricted to abelian groups.
In \cref{ss:abelian}, we give an independent, purely algebraic construction of
the localization of an abelian group away from $S$ which also shows that the
two localizations agree.
\end{rmk}

We conclude this section with three examples.
The first two show that $\degg(k)$-localization is not a modality.
The first one uses a homotopical construction, while the second one is purely algebraic.
The third example shows that the $\degg(k)$-local types are not the separated types for any reflective subuniverse $L$.

\begin{eg}\label{example:nonlocalfib2}
    Consider the map $K(\Z,1) \to K(\Q,1)$ induced by the inclusion of groups $\Z \to \Q$.
    Using the long exact sequence of homotopy groups, one sees that the fiber over any point is
    (merely) equivalent to the set $\Q/\Z$.
    By \cref{lemma:setsarelocal}, sets are $\degg(k)$-local for any $k : \N$,
    so as long as $k > 0$, the type $K(\Z,1)$ is the total space of a fibration with $\degg(k)$-local fibers and $\degg(k)$-local base.
    On the other hand, if $k > 1$, $K(\Z,1)$ is not $\degg(k)$-local, so we see that $\degg(k)$-localization
    is not a modality in general.

In particular, by localizing the fiber sequence $\Q/\Z \to K(\Z,1) \to K(\Q,1)$ we see that
$\degg(k)$-localization does not preserve fiber sequences in general.
\end{eg}

\begin{eg}\label{example:nonlocalfib}
    Let $B$ be the subtype of $\Q \to \Q$ consisting of functions with bounded support.
    We can define this type constructively as the type of functions together with
    a mere bound for their support:
    \[
         B \defeq \sm{f : \Q \to \Q} \, \ttrunc{-1}{\sm{b : \Q} \prd{x : \Q} (x > b) \to (f(x) = 0) \times (f(-x) = 0)}.
    \]
    The group operation is given by $(f + g)(x) = f(x) + g(x)$.
    Notice that both $\Q$ and $B$ are uniquely $k$-divisible for any $k > 0$.

    Consider the semidirect product $P \defeq B \rtimes \Q$, where $\Q$ acts by translation:
    \[
        (r \cdot f)(x) = f(x+r).
    \]
    It can be shown that $(\delta_0, 2) : P$ does not have a square root. Here $\delta_0$ is
    the map $\Q \to \Q$ that is constantly $0$, except on $0$, where it takes the value $1$,
    together with a proof that the support of this map is bounded.
    It follows that if we apply the functor $K(\blank, 1)$ to this short exact sequence of groups,
    we obtain a fiber sequence with connected, $\degg(2)$-local base and fiber,
    but for which the total space is not $\degg(2)$-local.

    Similarly, it can be shown that for any $k > 1$, $P$ is not uniquely $k$-divisible
    and therefore that $\degg(k)$-localization is also not a modality.
\end{eg}

\begin{eg}
As a final example in a similar spirit, we show that for $k > 1$, $L_{\degg(k)}$ is not
of the form $L'$ for any reflective subuniverse $L$.
For any $L$, the $L$-separated types are determined by their identity types.
However, the identity types of $K(\Q,1)$ and $K(\Z,1)$ are equivalent as types,
and the former is $L_{\degg(k)}$-local while the latter is not.
\end{eg}

\subsection{Localizing highly connected types}\label{ss:n-conn}

We continue to fix a family $S : A \to \N$ of natural numbers and the associated family
of maps $\degg(S) : A \to S^1 \to S^1$ sending $a$ to $\degg(S(a))$.
We write $M_S$ for the family of cofibers, which sends $a$ to the cofiber
of $\degg(S(a))$, a ``Moore space.''
We write $\susp{\degg(S)}$ for the family of suspensions of the maps $\degg(S(a))$,
and similarly consider $\suspsym^n M_S$ and $\suspsym^n \degg(S)$.

The main result of this section is:

\begin{thm}\label{theorem:localizationisnullification}
    Let $n\geq 1$.
    If $X$ is $n$-connected, then its
    $\degg(S)$-localization, its $\suspsym^{n-1} M_S$-nullification and
    its $\suspsym^n \degg(S)$-localization coincide.
\end{thm}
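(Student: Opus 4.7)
The plan is to obtain all three equivalences from \cref{theorem:characterizinglocalness}, applied to the family $f = \degg(S) : A \to (\sphere{1}\to\sphere{1})$ whose cofiber family is $M_S$. That theorem gives the implications (1) $\Rightarrow$ (2) $\Rightarrow$ (3) unconditionally, and the reverse implications under the hypothesis that the pointed mapping spaces $(\sphere{1}\pto X) \simeq \loopspacesym X$ are $(n-1)$-connected --- which is exactly what the $n$-connectedness of $X$ provides.

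The key step is to show that the ``largest'' of the three candidates, $Y \defeq L_{\suspsym^n\degg(S)} X$, is itself $n$-connected. Each map $\suspsym^n\degg(S(a)) : \sphere{n+1}\to\sphere{n+1}$ has $n$-connected source and target, so both $(n-1)$-truncations are contractible and the map is $(n-1)$-connected. \cref{cor:preserve-n-connected-stronger} then gives that $Y$ is $n$-connected whenever $X$ is. Consequently $\loopspacesym Y$ is $(n-1)$-connected, so the equivalence of (1), (2), (3) in \cref{theorem:characterizinglocalness} applies to $Y$; since $Y$ satisfies (3) by construction, it is also $\degg(S)$-local and $\suspsym^{n-1} M_S$-null.

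It remains to show that $\eta : X \to Y$ is both a $\degg(S)$-localization and a $\suspsym^{n-1} M_S$-nullification; the coincidence of the three localizations of $X$ then follows from the uniqueness of localizations. For any $\degg(S)$-local (respectively $\suspsym^{n-1} M_S$-null) type $Z$, the \emph{unconditional} implications (1) $\Rightarrow$ (3) (respectively (2) $\Rightarrow$ (3)) of \cref{theorem:characterizinglocalness} force $Z$ to be $\suspsym^n\degg(S)$-local, so the universal property of $Y$ yields an equivalence $(Y \to Z) \to (X \to Z)$ by precomposition with $\eta$. The main subtlety worth flagging is that $L_{\degg(S)}$ itself does not obviously preserve $n$-connectedness --- the map $\degg(k) : \sphere{1} \to \sphere{1}$ has nontrivial homotopy fibers and is not $0$-connected for $k > 1$ --- so one genuinely needs to route the argument through the suspended localization, where preservation of connectedness is automatic because the maps already sit between highly connected spheres.
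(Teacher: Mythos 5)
Your proposal follows essentially the same route as the paper: show that $Y \defeq L_{\suspsym^n\degg(S)}X$ is $n$-connected, feed that into \cref{theorem:characterizinglocalness} to conclude $Y$ is also $\degg(S)$-local and $\suspsym^{n-1}M_S$-null, and then identify the three localizations via the unconditional implications (the paper packages this last step as \cref{lemma:comparelocalization}(3), you unwind the universal properties by hand — same content). One justification slip is worth fixing: you claim the maps $\suspsym^n\degg(S(a)) : \sphere{n+1}\to\sphere{n+1}$ are $(n-1)$-connected ``because both $(n-1)$-truncations are contractible,'' but contractibility of the $(n-1)$-truncations of source and target does not imply $(n-1)$-connectedness of the map --- your own closing example, $\degg(k):\sphere{1}\to\sphere{1}$ with $k>1$, is a counterexample at $n=1$, since both $0$-truncations are contractible while the map is not $0$-connected. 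The correct observation is that the \emph{$n$-truncations} are contractible (as $\sphere{n+1}$ is $n$-connected), so $\trunc{n}{\suspsym^n\degg(S(a))}$ is an equivalence; this either feeds directly into \cref{cor:preserve-n-connected}, which is exactly what the paper does and requires nothing more, or, if you insist on \cref{cor:preserve-n-connected-stronger}, gives $(n-1)$-connectedness of the maps via the standard implication that a map whose $n$-truncation is an equivalence is $(n-1)$-connected. With that one correction the argument is complete.
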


Note that this implies that the same is true for the other localizations ``between''
$\degg(S)$ and $\suspsym^n \degg(S)$.

\begin{proof}
By \cref{cor:preserve-n-connected},
$L_{\susp^n \degg(S)} X$ is $n$-connected,
since $\trunc{n}{\suspsym^n \, \sphere{1}} \eqvsym 1$.
Thus, by \cref{theorem:characterizinglocalness}, $L_{\susp^n \degg(S)} X$
is $\degg(S)$-local and $\suspsym^{n-1} M_S$-local.
So the natural maps
\[
  L_{\susp^n \degg(S)} X \lra L_{\suspsym^{n-1} M_S} X \lra L_{\degg(S)} X
\]
are equivalences, by \cref{lemma:comparelocalization}(3).
\end{proof}

This theorem fails without the connectedness hypothesis, as the next example shows.

\begin{eg}
    The type $\sphere{1}$ is not $\degg(k)$-local for $k > 1$ but it is $M_k$-null, by the following argument.
    On the one hand, $\loopspacesym \sphere{1}$ is equivalent to the integers, and the $k$-fold map
    is multiplication by $k$, which is not an equivalence.
    On the other, mapping from the cofiber sequence $\sphere{1}\to \sphere{1} \to M_k$ into $\sphere{1}$ we
    obtain the fiber sequence:
    \[
        (M_k \to_\ast \sphere{1}) \longhookrightarrow (\sphere{1} \to_\ast \sphere{1}) \xrightarrow{\;k\;} (\sphere{1}\to_\ast \sphere{1}),
    \]
    and the multiplication by $k$ map on the integers is injective, so the pointed mapping space $M_k \to_\ast \sphere{1}$ is contractible. Therefore $\sphere{1}$ is $M_k$-null by \cref{cor:pointed_null}.
\end{eg}

As an application of \cref{theorem:localizationisnullification}, we have:

\begin{cor}\label{corollary:commutativitylooplocalizationsimpconn}
    For a pointed, simply connected type $X$, we have that $\loopspacesym \eta : \loopspacesym X \to \loopspacesym L_{\degg(S)} X$
    is a $\degg(S)$-localization. In particular,
    $
        \loopspacesym L_{\degg(S)} X \simeq L_{\degg(S)} \loopspacesym X
    $.
\end{cor}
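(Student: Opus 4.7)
The plan is to deduce this corollary from two ingredients already available: the identification of $L_{\degg(S)}$ with $L_{\susp \degg(S)}$ on simply connected types, and the general commutation of $L_{\susp f}$ with loop spaces.

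First, since $X$ is simply connected (i.e.\ $1$-connected), I would invoke \cref{theorem:localizationisnullification} with $n = 1$ to conclude that the natural map $L_{\susp \degg(S)} X \to L_{\degg(S)} X$ is an equivalence. In particular, the units $\eta : X \to L_{\degg(S)} X$ and $X \to L_{\susp \degg(S)} X$ agree up to this equivalence, so looping them yields the same map up to an equivalence of codomains.

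Next, I would apply \cref{remark:commutativitylooplocalization} to the family $\degg(S)$: this already states that for any family of maps $f$ and any pointed type $X$, the map $\loopspacesym \eta : \loopspacesym X \to \loopspacesym L_{\susp f} X$ is an $f$-localization. Applied to $f = \degg(S)$, it tells us that $\loopspacesym X \to \loopspacesym L_{\susp \degg(S)} X$ is a $\degg(S)$-localization of $\loopspacesym X$.

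Combining these two observations, $\loopspacesym \eta : \loopspacesym X \to \loopspacesym L_{\degg(S)} X$ factors through the equivalence $\loopspacesym L_{\susp \degg(S)} X \simeq \loopspacesym L_{\degg(S)} X$ as a $\degg(S)$-localization, and is therefore itself a $\degg(S)$-localization. The ``in particular'' clause then follows by uniqueness of localizations. There is no real obstacle here: the work has been done upstream in \cref{theorem:localizationisnullification} (where connectedness of $X$ was used to ensure that $L_{\susp \degg(S)} X$ is already $\degg(S)$-local) and in \cref{remark:commutativitylooplocalization} (itself resting on \cref{Lseparatedloopspace} and \cref{lemma:characterizationsigmaflocal}). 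The only thing to be careful about is that the hypothesis $n \geq 1$ in \cref{theorem:localizationisnullification} exactly matches the simple connectedness of $X$.
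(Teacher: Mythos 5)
Your proposal is correct and is essentially the paper's own proof: the paper deduces the corollary in one line from \cref{remark:commutativitylooplocalization} and \cref{theorem:localizationisnullification}, exactly the two ingredients you combine (with $n=1$ matching simple connectedness). Your write-up just spells out the transport along the equivalence $L_{\susp \degg(S)} X \simeq L_{\degg(S)} X$ more explicitly, which is fine.
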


\begin{proof}
This follows from \cref{remark:commutativitylooplocalization} and \cref{theorem:localizationisnullification}.
\end{proof}

For $k > 1$, taking $X = K(\Z/k\Z, 1)$ and considering $L_{\degg(k)}$
shows that the assumption that $X$ is simply connected cannot be removed.

Note that we cannot necessarily iterate the interchange of $\loopspacesym$ and $\degg(S)$-localization,
since $\loopspacesym X$ might fail to be simply connected.

\medskip

It also follows that these localizations preserve $l$-connected types.

\begin{cor}\label{corollary:plocalizationpreservesconnectedness}
    For $l \geq -2$ and $n \geq 0$, if $X$ is $l$-connected, then $L_{\susp^n \degg(S)} X$ is $l$-connected.
\end{cor}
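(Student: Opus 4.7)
The plan is to reduce the claim to \cref{cor:preserve-n-connected}, which delivers the conclusion as soon as $\trunc{l}{\susp^n \degg(S(a))}$ is an equivalence for every $a : A$. The cases $l \leq -1$ are trivial: every type is $(-2)$-connected, and $(-1)$-connectedness is preserved by any map, in particular by $\eta : X \to L_{\susp^n \degg(S)} X$. So I assume $l \geq 0$.

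I would split the remaining argument according to whether $l \leq n$ or $l > n$. For $l \leq n$, the source and target of $\susp^n \degg(S(a)) : \sphere{n+1} \to \sphere{n+1}$ both have contractible $l$-truncation, since $\sphere{n+1}$ is $n$-connected and hence $l$-connected. Therefore $\trunc{l}{\susp^n \degg(S(a))}$ is a map between contractible types, which is automatically an equivalence, and \cref{cor:preserve-n-connected} applies directly.

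The more delicate case is $l > n$, where $\trunc{l}{\sphere{n+1}}$ is no longer contractible (it has at least $\pi_{n+1} = \Z$) and $\trunc{l}{\susp^n \degg(S(a))}$ typically fails to be an equivalence. Here my plan is to use \cref{theorem:localizationisnullification} to trade $\susp^n \degg(S)$ for $\susp^l \degg(S)$. Since $l > n \geq 0$ forces $l \geq 1$, and $X$ is $l$-connected, that theorem yields $L_{\degg(S)} X \simeq L_{\susp^l \degg(S)} X$. When $n \geq 1$, $X$ is also $n$-connected and the same theorem gives $L_{\degg(S)} X \simeq L_{\susp^n \degg(S)} X$, while when $n = 0$ there is nothing to do because $L_{\susp^0 \degg(S)} X = L_{\degg(S)} X$ by definition. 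In every subcase one obtains $L_{\susp^n \degg(S)} X \simeq L_{\susp^l \degg(S)} X$, and the first case, applied with $n$ replaced by $l$, shows the right-hand side is $l$-connected.

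The main obstacle is the case $l > n$: the naive truncation argument fails there, and one has to raise the suspension index to match $l$ via \cref{theorem:localizationisnullification}. Once that reduction is in place the remainder is routine, but without the identification of $\degg(S)$-, $\susp^{n-1}M_S$-, and $\susp^n \degg(S)$-localizations on highly connected types, there would be no obvious way to bridge this gap.
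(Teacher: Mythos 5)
Your proposal is correct and follows essentially the same route as the paper: the case $l \leq n$ is handled by \cref{cor:preserve-n-connected} (your contractibility argument for $\trunc{l}{\susp^n\degg(S(a))}$ just spells out why that corollary applies), and the case $l > n$ is reduced to it via \cref{theorem:localizationisnullification}, exactly as in the paper's proof. Your explicit chaining through $L_{\degg(S)}X$ and the separate treatment of $n=0$ and the trivial cases $l \leq -1$ only make explicit what the paper leaves implicit.
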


\begin{proof}
    If $n \geq l$, this follows from \cref{cor:preserve-n-connected}.
    If $n < l$, then \cref{theorem:localizationisnullification} implies
    that $L_{\susp^n \degg(S)} X \eqvsym L_{\susp^l \degg(S)} X$, putting us
    in the situation where $n = l$.
\end{proof}

The following proposition implies that localizations of groups always exist
(see \cref{rmk:localizationofgroups})
and is also used to prove \cref{theorem:localizationKgn}.

\begin{prop}\label{lemma:localizationlocalizesfirst}
    For $n \geq 1$,
    the $\degg(S)$-localization of a pointed, $(n-1)$-connected type $X$ localizes $\pi_n(X)$ away from $S$.
    The algebraic localization takes place in the category of groups if $n = 1$ and
    abelian groups otherwise.
\end{prop}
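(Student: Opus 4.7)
The strategy is to unpack \cref{def:localizationofgroups} applied to the group homomorphism $\pi_n(\eta): \pi_n(X) \to \pi_n(L_{\degg(S)} X)$, establishing separately that the target is uniquely $S$-divisible and that precomposition with $\pi_n(\eta)$ enjoys the claimed universal property on $\Hom$-sets.

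For the divisibility, I would first invoke \cref{corollary:plocalizationpreservesconnectedness} (with $l = n-1$ and suspension exponent zero) to observe that $L_{\degg(S)} X$ remains pointed and $(n-1)$-connected. Since it is $\degg(S)$-local by construction, the forward direction of \cref{prop:homotopygroupsoflocalarelocal} then guarantees that all of its positive homotopy groups, in particular $\pi_n(L_{\degg(S)} X)$, are uniquely $S$-divisible. That this group is automatically abelian when $n \geq 2$ and possibly nonabelian when $n = 1$ matches the case split in the statement.

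For the universal property, fix a uniquely $S$-divisible $H$ (a group when $n=1$, an abelian group when $n>1$). By \cref{corollary:characterizationpdivisible}, $K(H,n)$ is $\degg(S)$-local, so the universal property of $\degg(S)$-localization produces an equivalence $(L_{\degg(S)} X \to K(H,n)) \simeq (X \to K(H,n))$ given by precomposition with $\eta$. Taking fibers over the basepoint of $K(H,n)$ yields the analogous equivalence on pointed mapping spaces, and since $K(H,n)$ is $n$-truncated these pointed mapping spaces factor through the $n$-truncations of source and target, producing an equivalence
\[
  (\trunc{n}{L_{\degg(S)} X} \pto K(H,n)) \simeq (\trunc{n}{X} \pto K(H,n))
\]
implemented by precomposition with $\trunc{n}{\eta}$.

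Finally, $\trunc{n}{X}$ and $\trunc{n}{L_{\degg(S)} X}$ are both pointed, $(n-1)$-connected $n$-types, so the equivalence of categories in \cref{theorem:catofgroups} identifies them with $\pi_n(X)$ and $\pi_n(L_{\degg(S)} X)$ and identifies $\trunc{n}{\eta}$ with $\pi_n(\eta)$; pointed maps between such Eilenberg--Mac Lane spaces correspond to group homomorphisms, the relevant pointed mapping spaces being sets by a routine connectivity calculation. Transporting the above equivalence across this correspondence then gives the required bijection $\Hom(\pi_n(L_{\degg(S)} X), H) \simeq \Hom(\pi_n(X), H)$ implemented by $\precomp{\pi_n(\eta)}$. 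The main subtlety I would be careful about is the invocation of \cref{theorem:catofgroups}: checking that $\trunc{n}{\eta}$ really corresponds to $\pi_n(\eta)$ under the equivalence, and that the resulting map on $\Hom$-sets really is precomposition with $\pi_n(\eta)$. Everything else is bookkeeping.
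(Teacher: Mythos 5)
Your proposal is correct and follows essentially the same route as the paper's proof: unique $S$-divisibility of $\pi_n(L_{\degg(S)}X)$ via \cref{prop:homotopygroupsoflocalarelocal}, and the universal property obtained by identifying $\precomp{\pi_n(\eta)}$ on $\Hom$-sets with $\precomp{\eta}$ on pointed mapping spaces into $K(H,n)$, using \cref{corollary:characterizationpdivisible}, the $n$-truncatedness of $K(H,n)$, \cref{corollary:plocalizationpreservesconnectedness}, and \cref{theorem:catofgroups}. You simply run the chain of equivalences in the opposite direction (from the localization's universal property down to the $\Hom$-sets) and make explicit the fiberwise passage from unpointed to pointed mapping spaces, which the paper leaves implicit.
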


\begin{proof}
    By \cref{prop:homotopygroupsoflocalarelocal},
    we know that $\pi_n(L_{\degg(S)} X)$ is uniquely $S$-divisible.
    So it remains to show that precomposition with $\pi_n(\eta) : \pi_n(X) \to \pi_n(L_{\degg(S)} X)$
    induces an equivalence
    \[
        \precomp{\pi_n(\eta)} : \Hom(\pi_n(L_{\degg(S)} X),  H) \lra \Hom(\pi_n(X), H)
    \]
    for every uniquely $S$-divisible group $H$ (where $H$ is assumed to be abelian if $n>1$).
    Notice that this map is equivalent to 
    \[
        \precomp{\pi_n(\ttrunc{n}{\eta})} : \Hom(\pi_n(\ttrunc{n}{L_{\degg(S)} X}),  H) \lra \Hom(\pi_n(\ttrunc{n}{X}), H)
    \]
    which in turn is equivalent to
    \[
        \precomp{\ttrunc{n}{\eta}} : \left( \ttrunc{n}{L_{\degg(S)}X} \pto K(H, n) \right) \lra \left( \ttrunc{n}{X} \pto K(H, n) \right) 
    \]
    by \cref{theorem:catofgroups}, using the fact that $\ttrunc{n}{L_{\degg(S)}X}$ is still $(n-1)$-connected (\cref{corollary:plocalizationpreservesconnectedness}).
    Finally, this last map is equivalent to
    \[
        \precomp{\eta} : \left( L_{\degg(S)}X \pto K(H, n) \right) \lra \left( X \pto K(H, n) \right),
    \]
    since $K(H,n)$ is $n$-truncated.
    And this map is an equivalence, since $K(H,n)$ is $\degg(S)$-local by \cref{corollary:characterizationpdivisible}.
\end{proof}

The results of this section also let us deduce the following lemma,
which generalizes \cref{corollary:commutativitylooplocalizationsimpconn}
and will be used in \cref{ss:localization-of-homotopy-groups}.

\begin{lem}\label{lemma:lex}
    $\degg(S)$-localization preserves pullbacks of cospans of simply connected types.
    In particular, $\degg(S)$-localization preserves fiber sequences in which the base and total space are simply connected.
\end{lem}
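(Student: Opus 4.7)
The plan is to apply \cref{remark:preservationpullbacks} with $L = L_{\degg(S)}$ and exploit the fact that on simply connected types, $L$ agrees with the reflective subuniverse $L'$ of $L$-separated types. By \cref{lemma:characterizationsigmaflocal}, $L' = L_{\suspsym \degg(S)}$. For a cospan $Y \to Z \leftarrow X$ of simply connected types with pullback $P$, that result produces an $L$-equivalence $P \to Q$, where $Q$ is the pullback of the $L'$-localized cospan $L'Y \to L'Z \leftarrow L'X$; in particular $LP \simeq LQ$.

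The key step is to invoke \cref{theorem:localizationisnullification} with $n = 1$: since simply connected means $1$-connected, the $\degg(S)$-localization and the $\suspsym \degg(S)$-localization coincide on each of $X$, $Y$, $Z$. Hence $L'X \simeq LX$, and similarly for $Y$ and $Z$, so $Q$ is the pullback $LY \times_{LZ} LX$. Because $L$-local types are closed under pullbacks, $Q$ is itself $L$-local, giving $LQ \simeq Q$. Combining these equivalences yields $LP \simeq LY \times_{LZ} LX$, which is precisely the statement that $L_{\degg(S)}$ preserves the pullback of the original cospan.

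For the ``in particular'' clause, a fiber sequence $F \to E \to B$ with $E$ and $B$ simply connected exhibits $F$ as the pullback of the cospan $E \to B \leftarrow \unit$, whose three entries are all simply connected. Applying the pullback statement (and using that $\unit$ is already $L$-local) gives $LF \simeq LE \times_{LB} \unit$, so $LF \to LE \to LB$ is again a fiber sequence. The only real difficulty is noticing that the potential gap between the $L$- and $L'$-localizations in \cref{remark:preservationpullbacks} vanishes for simply connected types, and this collapse is exactly what \cref{theorem:localizationisnullification} delivers.
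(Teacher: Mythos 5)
Your proof is correct and follows essentially the same route as the paper: it combines \cref{remark:preservationpullbacks} (with $L = L_{\degg(S)}$, $L' = L_{\suspsym\degg(S)}$), the coincidence of $\degg(S)$- and $\suspsym\degg(S)$-localization on simply connected types from \cref{theorem:localizationisnullification}, and closure of local types under pullbacks to see that the comparison map is itself a localization. The only cosmetic difference is that the paper keeps the intermediate pullback of the $\suspsym\degg(S)$-localized cospan explicit, while you identify $L'X \simeq L X$ on each vertex directly.
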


\cref{example:nonlocalfib2,example:nonlocalfib} show that the $\degg(S)$-localization of a
fiber sequence is not a fiber sequence in general.

\begin{proof}
    Since the unit type is $\degg(S)$-local and simply connected,
    the second claim is a particular case of the first one.

    To prove the first claim, assume given a cospan of simply connected types $Y \rightarrow Z \leftarrow X$,
    and call its pullback $P$.
    We can first $\susp\degg(S)$-localize this cospan and then $\degg(S)$-localize it, to obtain pullbacks $Q'$ and $Q$,
    together with natural maps $P \to Q' \to Q$.
    Since $X$, $Y$, and $Z$ are simply connected, the map $Q' \to Q$ is an equivalence, by \cref{theorem:localizationisnullification}.
    On the other hand, using \cref{remark:preservationpullbacks} and setting $L\equiv L_{\degg(S)}$,
    we see that the natural map $P \to Q'$ is a $\degg(S)$-equivalence.
    To conclude the proof, notice that $Q$, being the limit of a diagram of $\degg(S)$-local types, is $\degg(S)$-local,
    so the map $P \to Q'$ is actually a $\degg(S)$-localization.
\end{proof}

We end this section with the following lemma, which is of a similar
flavor and which will be used in \cref{ss:localization-of-homotopy-groups}.

\begin{lem}\label{lemma:truncationpreserveslocal}
    For $l \geq -2$ and $n \geq 0$, the $l$-truncation of a $\suspsym^n \degg(S)$-local type is $\suspsym^n \degg(S)$-local.
\end{lem}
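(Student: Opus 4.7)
The plan is to reduce $\suspsym^n\degg(S)$-locality of $\trunc{l}{X}$ to the triviality that the truncation of an equivalence is an equivalence, via the pointed mapping space characterization of locality.

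First, I would apply \cref{lemma:pointed}: $\trunc{l}{X}$ is $\suspsym^n\degg(S)$-local iff for every $y:\trunc{l}{X}$ and every $a:A$, the map
\[
  \precomp{\suspsym^n \degg(S(a))} : (\sphere{n+1} \pto \trunc{l}{X}) \lra (\sphere{n+1} \pto \trunc{l}{X})
\]
is an equivalence. Using the suspension-loop adjunction together with \cite[Lemma~6.5.4]{hottbook} (as is done in the proof of \cref{theorem:characterizinglocalness}) and Eckmann-Hilton (as in the proof of \cref{prop:homotopygroupsoflocalarelocal}), this map is identified with the $S(a)$-fold map on the iterated loop space $\loopspacesym^{n+1}(\trunc{l}{X}, y)$.

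Next, I would iterate the standard identification $\loopspacesym \trunc{k}{Y} \simeq \trunc{k-1}{\loopspacesym Y}$ (where $\trunc{k}{-}$ for $k \leq -2$ is understood as the contractible modality) to obtain
\[
  \loopspacesym^{n+1} \trunc{l}{X} \;\simeq\; \trunc{l-n-1}{\loopspacesym^{n+1} X}
\]
at the chosen base point. By naturality, the $S(a)$-fold map on the left corresponds to $\trunc{l-n-1}{-}$ applied to the $S(a)$-fold map on $\loopspacesym^{n+1} X$. Since being an equivalence is a mere proposition and $\eta:X \to \trunc{l}{X}$ is surjective, I can eliminate the propositional truncation to assume $y = \eta(x)$ for some honest $x:X$; then the hypothesis that $X$ is $\suspsym^n\degg(S)$-local, together with the same suspension-loop computation applied to $X$, says that the $S(a)$-fold map on $\loopspacesym^{n+1}(X,x)$ is an equivalence. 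Its $(l-n-1)$-truncation is therefore also an equivalence, which is what was needed.

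I do not expect any substantive obstacle: the main thing to get right is the identification of the precomposition map with the $S(a)$-fold map in the iterated loop space, and the boundary case $l - n - 1 \leq -2$ (where both sides are contractible and the statement is automatic). Note also that this argument does not require $n \geq 1$ nor any connectedness on $X$.
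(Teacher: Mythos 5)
Your proposal is correct and follows essentially the same route as the paper: reduce locality of $\trunc{l}{X}$ to the $S(a)$-fold map on $\loopspacesym^{n+1}$ at each basepoint, dispose of the degenerate case $l-n-1\leq -2$, use surjectivity of the truncation map to assume the basepoint is $\tproj{l}{x}$, and transport along the equivalence $\loopspacesym^{n+1}\trunc{l}{X}\simeq\trunc{l-n-1}{\loopspacesym^{n+1}X}$ (the paper obtains this from \cref{Lseparatedloopspace} iterated, equivalently \cite[Corollary~7.3.14]{hottbook}) so that the map becomes the truncation of the $S(a)$-fold map on $\loopspacesym^{n+1}X$, which is an equivalence by hypothesis. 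The only point you compress is the verification that the comparison square actually commutes, which the paper checks by precomposing with the map $\loopspacesym^{n+1}X\to\trunc{l-n-1}{\loopspacesym^{n+1}X}$; this is a minor bookkeeping step, not a gap.
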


\begin{proof}
    Let $X$ be a $\suspsym^n \degg(S)$-local type.
    Fix $a : A$ and let $k = S(a)$.
    We must show that $k : \loopspacesym^{n+1} \ttrunc{l}{X} \to \loopspacesym^{n+1} \ttrunc{l}{X}$
    is an equivalence for each basepoint $x' : \ttrunc{l}{X}$.
    If $l - (n+1) \leq -2$, then $\loopspacesym^{n+1} \ttrunc{l}{X}$ is
    contractible, so this is clear.
    So assume that $l - (n+1) > -2$, and in particular that $l > -2$.
    Since being an equivalence is a mere proposition,
    we can assume that $x' = \tproj{l}{x}$ for some $x : X$.
    Recall that $l$-truncation
    is the subuniverse of separated types for $(l-1)$-truncation (\cref{example:truncationisseparated}).
    Applying \cref{Lseparatedloopspace} $n+1$ times (or \cite[Corollary~7.3.14]{hottbook})
    gives the equivalences
    in the square
\[
        \begin{tikzpicture}
          \matrix (m) [matrix of math nodes,row sep=2.5em,column sep=3em,minimum width=2em]
          { \loopspacesym^{n+1} \trunc{l}X & \trunc{l-n-1}{\loopspacesym^{n+1} X}\hspace*{-3ex} \\
            \loopspacesym^{n+1} \trunc{l} X & \trunc{l-n-1}{\loopspacesym^{n+1} X} . \hspace*{-3ex} \\};
          \path[->]
            (m-1-1) edge node [left] {$k$} (m-2-1)
            (m-2-2) edge node [above] {$\sim$} (m-2-1)
            (m-1-2) edge node [right]{$\trunc{l-n-1}{k}$} (m-2-2)
                    edge node [above] {$\sim$} (m-1-1)
            ;
        \end{tikzpicture}
    \]
    To show that the square commutes, it suffices to check this after precomposing
    with the truncation map $\loopspacesym^{n+1} X \to \trunc{l-n-1}{\loopspacesym^{n+1} X}$.
    And this follows since the equivalences commute with the natural maps from
    $\loopspacesym^{n+1} X$ and
    both vertical maps commute with $k : \loopspacesym^{n+1} X \to \loopspacesym^{n+1} X$.
    Since $X$ is $\suspsym^n \degg(k)$-local, the map on the right hand side is an equivalence,
    so the result follows.
\end{proof}

\subsection{Localization of Eilenberg--Mac Lane spaces}\label{ss:localizingKgn}

In this section, we compute the localization of a loop space as a mapping telescope,
in a way that is familiar from classical topology.
This specializes to give a concrete description of the
localization of an Eilenberg--Mac Lane space for an abelian group.
This is the key ingredient in the proofs of the results in the next section.

For the remainder of \cref{section:localizationaway}, we assume that our family
$S$ is indexed by the natural numbers, i.e., we have $S : \N \to \N$ and
$\degg(S)$ sends $i$ to $\degg(S(i))$.

For example, if we want to invert a decidable subset of the natural numbers
$T : \N \to \bool$, we can define a family $S : \N \to \N$ by
\[
    S(k) \defeq \begin{cases} k, & \text{if $T(k)$} \\
                              1, & \text{otherwise.}
                \end{cases}
\]
It is easy to see that a type is $\degg(S)$-local if and only if it is $\degg(k)$-local for every $k$ such that $T(k)$ is true.
The most important example of this form is the case of localizing \emph{at} a prime $p$,
in which we take $T$ to be the subset of primes different from $p$.

\begin{lem}\label{lemma:pmapisorthogonal}
    Given a pointed, simply connected type $X$, the $k$-fold map $k : \loopspacesym X \to \loopspacesym X$
    is an $L_{\degg(k)}$-equivalence.
\end{lem}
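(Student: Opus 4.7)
The plan is to combine the commutation of loops and $\degg(k)$-localization on simply connected types with the characterization of $\degg(k)$-local types via their $k$-fold maps. By \cref{corollary:commutativitylooplocalizationsimpconn}, since $X$ is pointed and simply connected, $\loopspacesym \eta : \loopspacesym X \to \loopspacesym L_{\degg(k)} X$ is itself an $L_{\degg(k)}$-localization; in particular, it exhibits $\loopspacesym L_{\degg(k)} X$ as $L_{\degg(k)} \loopspacesym X$. On the other side, $L_{\degg(k)} X$ is by definition $\degg(k)$-local, so the lemma appearing just before \cref{prop:homotopygroupsoflocalarelocal} (characterizing $\degg(S)$-locality in terms of the $k$-fold map on loops) tells us that the $k$-fold map $k : \loopspacesym L_{\degg(k)} X \to \loopspacesym L_{\degg(k)} X$ is an equivalence.

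Next I would use naturality: for any pointed map $f : Y \to Z$ between pointed types, $\loopspacesym f$ commutes with the $k$-fold map, simply because it commutes with path concatenation. Applying this to $f = \eta : X \to L_{\degg(k)} X$ yields a commutative square with horizontal maps $\loopspacesym \eta$ and vertical maps $k_{\loopspacesym X}$ on the left and $k_{\loopspacesym L_{\degg(k)} X}$ on the right. By the functoriality of $L_{\degg(k)}$ and the universal property, the $L_{\degg(k)}$-image of the left-hand vertical map is conjugate, via the equivalence $L_{\degg(k)} \loopspacesym X \simeq \loopspacesym L_{\degg(k)} X$ from the previous paragraph, to the right-hand vertical map. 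Since the latter is an equivalence, so is $L_{\degg(k)}(k)$, which is precisely the statement that $k : \loopspacesym X \to \loopspacesym X$ is an $L_{\degg(k)}$-equivalence.

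The only real subtlety is making the identification of $L_{\degg(k)}(k)$ with $k_{\loopspacesym L_{\degg(k)} X}$ precise. This is not hard: the commutative square above shows that $k_{\loopspacesym L_{\degg(k)} X} \circ \loopspacesym \eta$ factors through $\loopspacesym \eta$, so by the universal property of the $L_{\degg(k)}$-localization $\loopspacesym \eta$ (noting that the codomain $\loopspacesym L_{\degg(k)} X$ is $\degg(k)$-local, being the loop space of a $\degg(k)$-local type), there is a unique induced map on $L_{\degg(k)} \loopspacesym X$, which must agree with both $L_{\degg(k)}(k)$ (by definition of the functorial action) and with $k_{\loopspacesym L_{\degg(k)} X}$ transported across the equivalence. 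No essential obstacle remains beyond this bookkeeping.
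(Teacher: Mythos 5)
Your proposal is correct and follows essentially the same route as the paper: it uses \cref{corollary:commutativitylooplocalizationsimpconn} to identify $\loopspacesym \eta$ as a $\degg(k)$-localization, the fact that the $k$-fold map on $\loopspacesym L_{\degg(k)} X$ is an equivalence since $L_{\degg(k)}X$ is local, and the uniqueness clause of the universal property to identify $L_{\degg(k)}(k)$ with that $k$-fold map. The bookkeeping step you flag (naturality of the $k$-fold map under $\loopspacesym \eta$ and the resulting identification) is exactly the uniqueness argument the paper itself makes.
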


\begin{proof}
    We have the usual square
    \[
        \begin{tikzpicture}
          \matrix (m) [matrix of math nodes,row sep=2.5em,column sep=3em,minimum width=2em]
          { \loopspacesym X \strut & L_{\degg(k)} \loopspacesym X \strut \\
            \loopspacesym X \strut & L_{\degg(k)} \loopspacesym X . \strut \\};
          \path[->]
            (m-1-1) edge node [above] {$\eta$} (m-1-2)
                    edge node [left] {$k$} (m-2-1)
            (m-2-1) edge node [above] {$\eta$} (m-2-2)
            (m-1-2) edge node [right]{$L_{\degg(k)} k$} (m-2-2)
            ;
        \end{tikzpicture}
    \]
    Recall that the right vertical map is the unique map that makes the square commute.
    Now, by \cref{corollary:commutativitylooplocalizationsimpconn}, we know that
    the map $\loopspacesym \eta : \loopspacesym X \to \loopspacesym L_{\degg(k)} X$ is a $\degg(S)$-localization, so that,
    by the uniqueness of the right vertical map, the above square is equivalent to the square
    \[
        \begin{tikzpicture}
          \matrix (m) [matrix of math nodes,row sep=2.5em,column sep=3em,minimum width=2em]
          { \loopspacesym X \strut & \loopspacesym L_{\degg(k)} X \strut \\
            \loopspacesym X \strut & \loopspacesym L_{\degg(k)} X , \strut \\};
          \path[->]
            (m-1-1) edge node [above] {$\loopspacesym \eta$} (m-1-2)
                    edge node [left] {$k$} (m-2-1)
            (m-2-1) edge node [above] {$\loopspacesym \eta$} (m-2-2)
            (m-1-2) edge node [right]{$k$} (m-2-2)
            ;
        \end{tikzpicture}
    \]
    where the map on the right is the usual $k$-fold map.
    But in this square the right vertical map is an equivalence, since $L_{\degg(k)} X$
    is $\degg(k)$-local.
\end{proof}

We are almost ready to localize the loop space of a simply connected type away from $S : \N \to \N$.
Before doing this we need a general fact about sequential colimits.

\begin{rmk}\label{remark:equivalenceofseqcolim}
Given a sequential diagram $X$:
\[
    X_0 \xrightarrow{\,h_0\,} X_1 \xrightarrow{\,h_1\,} \cdots
\]
we can consider the shifted sequential diagram $X_1 \to X_2 \to \cdots$, which we denote $X[1]$.
Analogously, we can shift a natural transformation $f : X \to Y$ between sequential
diagrams to obtain a natural transformation $f[1] : X[1] \to Y[1]$.
Notice that the transition maps $h_n$ induce a natural transformation $h : X \to X[1]$, and that moreover, the
induced map $\colim\, h : \colim X \to \colim X[1]$ is an equivalence.
It then follows that if $f : X \to Y$ is a natural transformation between sequential diagrams, such that there exist $g, g' : Y \to X[1]$
with $g \circ f = h$ and $f[1] \circ g' = i$, where $i : Y \to Y[1]$ is the transition map of $Y$, then $f$ induces an equivalence
$\colim\, f : \colim X \xrightarrow{\sim} \colim Y$.
\end{rmk}

\begin{thm}\label{theorem:localizationastelescope}
    Let $X$ be pointed and simply connected, and
    define $s : \N \to \N$ by $s(n) = \prod_{i=0}^n S(i)$.
    Then the $\degg(S)$-localization of $\loopspacesym X$ is equivalent to the colimit of the sequence
    \[\loopspacesym X \xrightarrow{s(0)}\loopspacesym X \xrightarrow{s(1)} \loopspacesym X \xrightarrow{s(2)} \cdots, \]
    where as usual a natural number $k$ is used to denote the $k$-fold map.
\end{thm}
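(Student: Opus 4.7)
The plan is to show that the natural map $\iota : \loopspacesym X \to C$ into the colimit is an $L_{\degg(S)}$-localization of $\loopspacesym X$. This reduces to two claims: (A) $\iota$ is an $L_{\degg(S)}$-equivalence; and (B) $C$ is $\degg(S)$-local.

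For (A), each transition map $s(n) : \loopspacesym X \to \loopspacesym X$ is the $s(n)$-fold map, which factors as a composition of $S(i)$-fold maps for $i = 0, \ldots, n$. By \cref{lemma:pmapisorthogonal} (which applies since $X$ is simply connected), each $S(i)$-fold map on $\loopspacesym X$ is an $L_{\degg(S(i))}$-equivalence. Since every $\degg(S)$-local type is $\degg(S(i))$-local, \cref{lemma:characterizationorthogonal} promotes each such map to an $L_{\degg(S)}$-equivalence, so each transition $s(n)$ is an $L_{\degg(S)}$-equivalence. \cref{lemma:orthogonalcomposition} then gives that $\iota : \loopspacesym X \to C$ is an $L_{\degg(S)}$-equivalence.

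For (B), $C$ is connected (as a sequential colimit of the connected types $\loopspacesym X$), so by \cref{lemma:pointed} it suffices to prove, for each $i : \N$, that the $S(i)$-fold map on $\loopspacesym C$ is an equivalence. My plan is to identify $\loopspacesym C$ with $\colim_n \loopspacesym^2 X$, whose transition maps are $s(n)$-fold maps by the Eckmann--Hilton argument. Applying \cref{remark:equivalenceofseqcolim} to the natural transformation ``$S(i)$-fold'' on this diagram, and restricting to the shifted diagram starting at stage $i$ (which does not change the colimit), I take $g_n = g'_n$ to be the $(s(n)/S(i))$-fold map; this is well-defined since $S(i) \mid s(n)$ for $n \geq i$, and both the naturality square for $g$ and the required identities $g \circ (S(i)\text{-fold}) = s(n)\text{-fold}$, $(S(i)\text{-fold}) \circ g' = s(n)\text{-fold}$ reduce to identities on exponents.

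The main obstacle will be justifying the identification $\loopspacesym C \simeq \colim_n \loopspacesym^2 X$, i.e., that the loop space functor commutes with this sequential colimit. This is a subtle point in HoTT, since $\sphere{1}$ is not a compact object, but it should be accessible via the sequential colimit machinery from Brunerie's thesis (cited in the text) combined with the connectedness of each $\loopspacesym X$. Once (A) and (B) are in hand, $\iota : \loopspacesym X \to C$ is an $L_{\degg(S)}$-equivalence whose codomain is $\degg(S)$-local, hence an $L_{\degg(S)}$-localization, completing the proof.
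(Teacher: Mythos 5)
Your proposal is correct and takes essentially the same route as the paper: part (A) is the paper's argument via \cref{lemma:pmapisorthogonal}, \cref{lemma:characterizationorthogonal} and \cref{lemma:orthogonalcomposition}, and part (B) matches the paper's reduction to the loop space of the (connected) colimit, the Eckmann--Hilton identification of the looped transition maps with $s(n)$-fold maps, and the application of \cref{remark:equivalenceofseqcolim} to the shifted diagram with $g=g'$ the $(s(n)/S(i))$-fold maps. The one step you flag as an obstacle---commuting $\loopspacesym$ with the sequential colimit---is exactly what the paper disposes of by citing \cite{DoornRijkeSojakova}, where it is proved without any connectedness hypothesis, so it is not a genuine gap.
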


\begin{proof}
    We must show that the colimit is $\degg(S)$-local and that
    the map $\loopspacesym X \to \colim\, \loopspacesym X$ is an $L_{\degg(S)}$-equivalence.

    To prove that $\loopspacesym X \to \colim\, \loopspacesym X$ is an $L_{\degg(S)}$-equivalence
    we use \cref{lemma:orthogonalcomposition}, so it is enough to show that all the maps in the
    diagram are $L_{\degg(S)}$-equivalences. To prove this last fact notice that \cref{lemma:pmapisorthogonal}
    implies that the $s(k)$-fold map is an $L_{\degg(s(k))}$-equivalence. But in particular this implies that it is also an
    $L_{\degg(S)}$-equivalence, since every $\degg(S)$-local type is $\degg(s(k))$-local for every $k$.

    Now we must show that the colimit is $\degg(S(k))$-local for each $k$.
    So fix $k$ and recall that
    the colimit is equivalent to the colimit of the sequence starting at $k$.
    That is, we consider the colimit of the sequential diagram with objects
    $C_n = \loopspacesym X$ and transition maps $h_n : C_n \to C_{n+1}$ given by $s(n+k)$.
    Then $\colim_n C_n$ is pointed and connected by \cite{DoornRijkeSojakova},
    so it is enough to check that the map $S(k) : \loopspacesym \colim_n C_n \to \loopspacesym \colim_n C_n$ is an equivalence.
    By the commutativity of loop spaces and sequential colimits \cite{DoornRijkeSojakova},
    it is then enough to show that the natural transformation $S(k) : \loopspacesym C_n \to \loopspacesym C_n$
    induces an equivalence in the colimit.

    By the Eckmann-Hilton argument~\cite[Theorem~2.1.6]{hottbook}, the transition map
    $\loopspacesym h_n : \loopspacesym C_n \to \loopspacesym C_{n+1}$ is homotopic to $s(n+k)$.
    To finish the proof, we apply \cref{remark:equivalenceofseqcolim} to the sequential
    diagram $\loopspacesym C_0 \to \loopspacesym C_1 \to \cdots$,
    taking $f$ to be the natural transformation given by
    $S(k)$ in every degree, and both $g$ and $g'$ to be the natural transformation given by
    $s(n + k)/S(k) : \loopspacesym C_n \to \loopspacesym C_{n+1}$ in degree $n$.
    We have that $g_n \circ f_n = f_{n+1} \circ g'_n = s(n + k)$, and moreover that
    $g \circ f = f[1] \circ g' = h$ as natural transformations.
    So $S(k) : \loopspacesym C_n \to \loopspacesym C_n$ induces an equivalence in the colimit,
    as needed.
\end{proof}

Using the fact that $K(G,n)$ is the loop space of a simply connected space
when $G$ is abelian we deduce:

\begin{cor}\label{corollary:localizationKgn}
    For $n \geq 1$ and $G$ abelian,
    the $\degg(S)$-localization of $K(G,n)$ is equivalent to the colimit of the sequence
    \[
      K(G,n) \xrightarrow{s(0)} K(G,n) \xrightarrow{s(1)} \cdots .  \tag*{\qed}
    \]
\end{cor}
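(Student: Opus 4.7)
The plan is to deduce this immediately from \cref{theorem:localizationastelescope} by exhibiting $K(G,n)$ as a loop space of a simply connected type. Since $G$ is abelian, the Eilenberg--Mac Lane space $K(G,n+1)$ exists and we have the standard equivalence $K(G,n) \simeq \loopspacesym K(G,n+1)$ (obtained either by construction of $K(G,n)$ via $n$-fold delooping or by the characterization in \cref{theorem:catofgroups} applied to the $(n+1)$-connected $n$-truncated type $K(G,n)$ together with the observation that $\loopspacesym K(G,n+1)$ has the same homotopy groups and truncation level).

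Next, I would verify the hypothesis of \cref{theorem:localizationastelescope}: the type $K(G,n+1)$ is pointed, and since $n \geq 1$ it is $n$-connected and hence simply connected. Applying \cref{theorem:localizationastelescope} to $X \defeq K(G,n+1)$ then gives that $L_{\degg(S)} \loopspacesym K(G,n+1)$ is equivalent to the colimit of the sequence
\[
  \loopspacesym K(G,n+1) \xrightarrow{\,s(0)\,} \loopspacesym K(G,n+1) \xrightarrow{\,s(1)\,} \loopspacesym K(G,n+1) \xrightarrow{\,s(2)\,} \cdots.
\]
Transporting this along the equivalence $K(G,n) \simeq \loopspacesym K(G,n+1)$ (which is a pointed equivalence respecting the $k$-fold maps, since the $k$-fold power on $\loopspacesym Y$ corresponds to the $k$-fold map on $K(G,n)$ under the Eckmann--Hilton argument) yields the stated description of $L_{\degg(S)} K(G,n)$.

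There is no substantive obstacle: once the identification $K(G,n) \simeq \loopspacesym K(G,n+1)$ is in hand and $K(G,n+1)$ is recognized as simply connected, the corollary is a direct specialization of the preceding theorem. The only mild care needed is to check that the $k$-fold map on the loop space corresponds under this equivalence to the $k$-fold map used in the statement, which follows from naturality of the loop space functor and the Eckmann--Hilton comparison already invoked in \cref{prop:homotopygroupsoflocalarelocal}.
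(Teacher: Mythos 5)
Your proposal is correct and matches the paper's argument: the paper deduces the corollary exactly by writing $K(G,n)$ as $\loopspacesym K(G,n+1)$ with $K(G,n+1)$ simply connected (since $n\geq 1$) and specializing \cref{theorem:localizationastelescope}. Your extra care in checking that the $k$-fold maps correspond under this identification is a reasonable elaboration of what the paper leaves implicit.
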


In particular, it follows from \cite{DoornRijkeSojakova} that the
$\deg(S)$-localization of $K(G,n)$ is $n$-truncated and $(n-1)$-connected,
so it is again a $K(G',n)$ for some group $G'$.
By \cref{lemma:localizationlocalizesfirst}, $G'$ is the algebraic localization of $G$
away from $S$ (in the category of groups if $n = 1$ and abelian groups otherwise).
We deduce:

\begin{thm}\label{theorem:localizationKgn}
    Let $n\geq 1$ and let $G$ be any abelian group.
    Then the $\degg(S)$-localization map $K(G,n) \to L_{\degg(S)} K(G,n)$ is a map between Eilenberg--Mac Lane spaces and
    is induced by an algebraic localization of $G$ away from $S$.
    Moreover, the algebraic localization of $G$ is equivalent to the colimit
    \[
      G \xrightarrow{s(0)} G \xrightarrow{s(1)} \cdots
    \]
    and is therefore abelian, even when $n = 1$.  \qed
\end{thm}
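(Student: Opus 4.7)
The plan is to combine \cref{corollary:localizationKgn} with \cref{lemma:localizationlocalizesfirst}, after first verifying that the mapping telescope really is an Eilenberg--Mac Lane space. By \cref{corollary:localizationKgn}, we may identify $L_{\degg(S)}K(G,n)$ with the sequential colimit $C$ of
\[
  K(G,n) \xrightarrow{s(0)} K(G,n) \xrightarrow{s(1)} K(G,n) \xrightarrow{s(2)} \cdots .
\]
The first task is to show that $C$ is pointed, $(n-1)$-connected and $n$-truncated, so that \cref{theorem:catofgroups} applies to produce a group (abelian when $n>1$) $G' := \pi_n(C)$ with $C \simeq K(G',n)$.

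For both connectedness and truncatedness I would use the fact that sequential colimits commute with loop spaces and with set truncations, as established in \cite{DoornRijkeSojakova}. Iterating the commutation with loops and then applying $\trunc{0}{-}$ yields an equivalence $\pi_m(C) \simeq \colim_i \pi_m(K(G,n))$ for every $m \geq 0$. Since $\pi_m(K(G,n)) = 0$ whenever $m \neq n$, all these colimits vanish off $m = n$, so $C$ is $(n-1)$-connected and $n$-truncated as needed. At $m = n$ the same computation identifies $G' = \pi_n(C)$ with the colimit of abelian groups
\[
  G \xrightarrow{s(0)} G \xrightarrow{s(1)} G \xrightarrow{s(2)} \cdots ,
\]
where each transition map is multiplication by $s(i)$; that the $s(i)$-fold map on $K(G,n)$ induces multiplication by $s(i)$ on $\pi_n$ follows from the Eckmann--Hilton argument, exactly as in the proof of \cref{prop:homotopygroupsoflocalarelocal}. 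Since this is a filtered colimit in the category of abelian groups, $G'$ is abelian, regardless of whether $n = 1$ or $n > 1$.

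Finally, \cref{lemma:localizationlocalizesfirst} applied to the $(n-1)$-connected type $K(G,n)$ tells us that the unit $K(G,n) \to L_{\degg(S)} K(G,n) \simeq K(G',n)$ realizes $G \to G'$ as the algebraic localization of $G$ away from $S$, taken in the category of groups if $n=1$ and in the category of abelian groups otherwise. Combining this with the colimit description of $G'$ gives the full statement: the algebraic localization is induced on the Eilenberg--Mac Lane spaces, it agrees with the sequential colimit above, and, since that colimit is visibly an abelian group, the localization of the abelian group $G$ in the category of groups is abelian. The main subtle point will be invoking the commutation results from \cite{DoornRijkeSojakova} carefully enough to conclude both vanishing of $\pi_m(C)$ for $m \neq n$ and the explicit form of the transition maps on $\pi_n(C)$; once those are in hand, everything else is bookkeeping.
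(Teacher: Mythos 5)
Your overall route is the same as the paper's: identify $L_{\degg(S)}K(G,n)$ with the telescope via \cref{corollary:localizationKgn}, show the colimit is a pointed, $(n-1)$-connected $n$-type so that it is a $K(G',n)$, and then let \cref{lemma:localizationlocalizesfirst} identify $G \to G' = \pi_n$ of the unit as the algebraic localization, with the colimit description of $G'$ and abelianness falling out. However, there is one genuine gap in how you establish that the colimit $C$ is an $n$-type. You compute $\pi_m(C) \simeq \colim_i \pi_m(K(G,n))$ using the commutation of sequential colimits with loop spaces and with $0$-truncation, conclude that $\pi_m(C)$ vanishes for $m \neq n$, and then assert that $C$ is therefore $(n-1)$-connected and $n$-truncated. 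The truncatedness half of this inference is a Whitehead-type argument, and Whitehead's theorem is not available in homotopy type theory: vanishing of all homotopy groups above level $n$ does not imply $n$-truncatedness of a type that is not already known to be truncated (or hypercomplete). This is exactly the pitfall the paper flags repeatedly, e.g.\ in the converse direction of \cref{prop:homotopygroupsoflocalarelocal}, where truncatedness must be assumed precisely because Whitehead fails. (The connectedness half can be salvaged from your homotopy-group computation, since $\trunc{n-1}{C}$ is truncated and the truncated Whitehead theorem applies to it, but the truncatedness half cannot.)

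The fix is to use a different result from \cite{DoornRijkeSojakova}, which is what the paper's citation is actually doing: sequential colimits preserve $n$-truncated types (equivalently, $n$-truncation commutes with sequential colimits), and likewise preserve connectedness. Since each stage of the telescope is the $n$-type $K(G,n)$, the colimit $C$ is directly an $(n-1)$-connected, $n$-truncated pointed type, with no detour through its homotopy groups. Once that is in place, the rest of your argument is sound and agrees with the paper: \cref{theorem:catofgroups} gives $C \simeq K(G',n)$, the commutation of loops and $0$-truncation with the colimit together with the Eckmann--Hilton identification of the transition maps gives $G' \simeq \colim(G \xrightarrow{s(0)} G \xrightarrow{s(1)} \cdots)$, which is abelian since the multiplication-by-$s(i)$ maps are homomorphisms of the abelian group $G$, and \cref{lemma:localizationlocalizesfirst} shows this is the algebraic localization away from $S$, in the category of groups when $n=1$ and of abelian groups otherwise.
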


It follows that the two types of algebraic localization agree for abelian groups,
so we do not need to distinguish between them in what follows.

\subsection{Localization of homotopy groups}\label{ss:localization-of-homotopy-groups}

In this section we prove the main theorem of the paper:
for simply connected types, $\degg(S)$-localization localizes all homotopy groups away from $S$.
We continue to assume that our family $S$ is indexed by the natural numbers,
as the essential ingredient is our result on the localization of Eilenberg--Mac Lane spaces from the previous section.
The other ingredient we need is that for simply connected types,
truncation commutes with $\degg(S)$-localization, which follows
from the next lemma.

\begin{lem}\label{lemma:locofscntrunc}
    The $\degg(S)$-localization of a simply connected, $n$-truncated type is $n$-truncated.
\end{lem}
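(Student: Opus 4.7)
The plan is to prove \cref{lemma:locofscntrunc} by induction on $n$. The cases $n \leq 1$ are trivial: a simply connected, $n$-truncated type with $n \leq 1$ is contractible, because $X \eqvsym \ttrunc{n}{X} \eqvsym \ttrunc{n}{\ttrunc{1}{X}} \eqvsym \ttrunc{n}{\unit} \eqvsym \unit$. For the base case $n = 2$, \cref{theorem:catofgroups} identifies $X$ with $K(\pi_2(X),2)$, and then \cref{theorem:localizationKgn} shows $L_{\degg(S)} X \simeq K(G',2)$ for the algebraic localization $G'$ of $\pi_2(X)$ away from $S$, which is manifestly $2$-truncated.

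For the inductive step, assume the claim holds for simply connected $(n{-}1)$-truncated types and let $X$ be simply connected and $n$-truncated with $n \geq 3$. Since being $n$-truncated is a mere proposition and $X$ is merely inhabited, I may choose a basepoint $x_0 : X$. Consider the Postnikov fiber sequence
\[
    F \lra X \lra \ttrunc{n-1}{X},
\]
where $F$ is the fiber of the truncation map over $\tproj{n-1}{x_0}$. Since $X$ is $n$-truncated, $F$ is $(n{-}1)$-connected and $n$-truncated, so by \cref{theorem:catofgroups} we have $F \simeq K(\pi_n(X), n)$ with $\pi_n(X)$ abelian. Both $X$ and $\ttrunc{n-1}{X}$ are simply connected (the latter because the $(n{-}1)$-truncation of a simply connected type is simply connected when $n - 1 \geq 1$), so \cref{lemma:lex} yields a fiber sequence
\[
    L_{\degg(S)} F \lra L_{\degg(S)} X \lra L_{\degg(S)} \ttrunc{n-1}{X}.
\]
By \cref{theorem:localizationKgn}, $L_{\degg(S)} F \simeq K(G', n)$, which is $n$-truncated. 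By the inductive hypothesis applied to $\ttrunc{n-1}{X}$, the base $L_{\degg(S)} \ttrunc{n-1}{X}$ is $(n{-}1)$-truncated, hence in particular $n$-truncated; it is also connected, so every fiber of $L_{\degg(S)} X \to L_{\degg(S)} \ttrunc{n-1}{X}$ is merely equivalent to $L_{\degg(S)} F$ and thus $n$-truncated. Consequently $L_{\degg(S)} X$, realized as a dependent sum of $n$-truncated types over an $n$-truncated base, is itself $n$-truncated.

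The main obstacle I anticipate is verifying the hypotheses of \cref{lemma:lex} at each step, and in particular carefully handling the degenerate base case $n=2$ (where the fiber sequence collapses to $X \to X \to \unit$, so the Postnikov argument yields no new information). This is precisely why the direct identification via \cref{theorem:localizationKgn} is needed for $n=2$, after which the inductive machinery runs cleanly for $n \geq 3$. An alternative route would combine \cref{lemma:truncationpreserveslocal} with \cref{theorem:localizationisnullification} to show that $\ttrunc{n}{L_{\degg(S)} X}$ is $\degg(S)$-local, but identifying the $n$-truncation map $L_{\degg(S)} X \to \ttrunc{n}{L_{\degg(S)} X}$ as a $\degg(S)$-localization requires essentially the same fiber-sequence analysis.
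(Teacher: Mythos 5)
Your proposal is correct and follows essentially the same route as the paper: reduce to the pointed case, induct on the truncation level, localize the Postnikov fiber sequence $K(\pi_n(X),n) \to X \to \ttrunc{n-1}{X}$ using \cref{lemma:lex}, and invoke \cref{theorem:localizationKgn} for the fiber. The only cosmetic difference is that you treat $n=2$ as a separate base case via \cref{theorem:catofgroups}, whereas the paper lets the same fiber-sequence argument handle it (with $\ttrunc{1}{X}$ contractible), which works equally well.
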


\begin{proof}
    First, we show that it suffices to prove the statement for pointed types.
    Let us assume given a simply connected and $n$-truncated type $X$, and let us denote the statement
    of the theorem by $P(X)$. If we prove $X \to P(X)$ it follows that $\ttrunc{-1}{X} \to P(X)$,
    since being $n$-truncated is a mere proposition. But this is enough, for $X$ is
    simply connected, which implies that $\ttrunc{1}{X}$ is contractible, and hence that
    $\ttrunc{-1}{X}$ is inhabited.

    We now assume that $X$ is pointed and proceed by induction.
    If $X$ is $-2$, $-1$, $0$ or $1$-truncated, we are done, since $X$ is also simply connected, and thus contractible.
    If $X$ is $(n+1)$-truncated and $n > 0$, consider the fiber sequence
    \[ K(G,n+1) \longhookrightarrow X \lra \ttrunc{n}{X}. \]
    Since the types in the fiber sequence are simply connected, $\degg(S)$-localization
    preserves this fiber sequence, by \cref{lemma:lex}. So we obtain a fiber sequence
    \[L_{\degg(S)} K(G,n+1) \longhookrightarrow L_{\degg(S)}X \lra L_{\degg(S)} \ttrunc{n}{X}. \]
    The type $L_{\degg(S)} \ttrunc{n}{X}$ is $n$-truncated by the induction hypothesis
    and $L_{\degg(S)} K(G,n+1)$ is $(n+1)$-truncated by \cref{theorem:localizationKgn}.
    So $L_{\degg(S)} X$ must be $(n+1)$-truncated as well.
\end{proof}

The proof of \cref{lemma:locofscntrunc} shows that we can compute
the $\degg(S)$-localization of a simply connected $n$-type $X$ by
localizing the Postnikov tower of $X$.

From \cref{lemma:locofscntrunc}, \cref{lemma:truncationpreserveslocal} and
\cref{lemma:commutelocalization}, we deduce:

\begin{cor}\label{corollary:localizationandtruncationcommute}
    For simply connected types, $\degg(S)$-localization and $n$-trunca\-tion commute.\qed
\end{cor}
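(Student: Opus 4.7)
The plan is to apply \cref{lemma:commutelocalization} with $K$ equal to $n$-truncation and $L$ equal to $\degg(S)$-localization. For a simply connected type $X$ this will yield $\trunc{n}{L_{\degg(S)} X} \simeq L_{\degg(S)} \trunc{n}{X}$ as soon as we can check the two hypotheses of that lemma: namely, that $\trunc{n}{L_{\degg(S)} X}$ is $\degg(S)$-local and that $L_{\degg(S)} \trunc{n}{X}$ is $n$-truncated.

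For the first hypothesis, I would invoke \cref{lemma:truncationpreserveslocal} in the special case where the suspension exponent is $0$. Since $L_{\degg(S)} X$ is $\degg(S)$-local by construction, its $n$-truncation is again $\degg(S)$-local. Note that this step does not require $X$ to be simply connected. For the second hypothesis, I would apply \cref{lemma:locofscntrunc}, which promises that the $\degg(S)$-localization of a simply connected $n$-truncated type is again $n$-truncated. The input $\trunc{n}{X}$ is manifestly $n$-truncated, so the only thing to check is that it remains simply connected. For $n \geq 1$ this follows from $\trunc{1}{\trunc{n}{X}} \simeq \trunc{1}{X}$, which is contractible by hypothesis; for the remaining small values of $n$ the truncation is trivially simply connected since $X$ is inhabited and connected.

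I do not expect serious obstacles: all of the genuine work has been front-loaded into \cref{lemma:locofscntrunc} and \cref{lemma:truncationpreserveslocal}, and \cref{lemma:commutelocalization} is precisely the abstract glue needed to combine them. The only feature worth remarking on is an asymmetry in the argument: preservation of $\degg(S)$-localness under truncation is unconditional, whereas preservation of $n$-truncatedness under $\degg(S)$-localization is exactly what forces the simply connected hypothesis, via the Postnikov-tower argument used in \cref{lemma:locofscntrunc}.
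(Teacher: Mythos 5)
Your proposal is correct and matches the paper's argument: the corollary is deduced exactly from \cref{lemma:locofscntrunc}, \cref{lemma:truncationpreserveslocal} (with suspension exponent $0$), and \cref{lemma:commutelocalization}, just as you describe. The small checks you add (that $\trunc{n}{X}$ is still simply connected) are fine and implicit in the paper.
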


We now give the main theorem of the paper.

\begin{thm}\label{theorem:localizationlocalizes}
    The $\degg(S)$-localization of a pointed, simply connected type $X$ localizes all of the
    homotopy groups away from $S$.
\end{thm}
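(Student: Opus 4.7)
The plan is to induct on $n$ using the Postnikov tower of $X$. Fix $n \geq 1$; we must show that the map $\pi_n(X) \to \pi_n(L_{\degg(S)} X)$ induced by $\eta$ is the algebraic localization away from $S$. First, by \cref{corollary:localizationandtruncationcommute} we have equivalences $\pi_n(L_{\degg(S)} X) \simeq \pi_n(\trunc{n}{L_{\degg(S)} X}) \simeq \pi_n(L_{\degg(S)} \trunc{n}{X})$, so it suffices to prove the statement for $X_n \defeq \trunc{n}{X}$ in place of $X$ and for the unit of $X_n$ in place of $\eta$. The base case $n=1$ is immediate: $X$ is simply connected by assumption and $L_{\degg(S)} X$ is simply connected by \cref{corollary:plocalizationpreservesconnectedness}, so $\pi_1$ vanishes on both sides.

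For the inductive step ($n \geq 2$), I would use the Postnikov fiber sequence
\[
  K(\pi_n X, n) \longhookrightarrow X_n \lra X_{n-1}.
\]
Both $X_n$ and $X_{n-1}$ are simply connected: for $n=2$, $X_1$ is contractible since $X$ is simply connected; for $n\geq 3$ this is clear. Hence \cref{lemma:lex} implies that $L_{\degg(S)}$ preserves this fiber sequence, giving
\[
  L_{\degg(S)} K(\pi_n X, n) \longhookrightarrow L_{\degg(S)} X_n \lra L_{\degg(S)} X_{n-1}.
\]
By \cref{theorem:localizationKgn} the fiber is $K(G, n)$, where $G$ is the algebraic localization of $\pi_n X$ away from $S$, and by \cref{lemma:locofscntrunc} the base $L_{\degg(S)} X_{n-1}$ is $(n-1)$-truncated. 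Its $\pi_n$ and $\pi_{n+1}$ therefore vanish, and the long exact sequence collapses to give an isomorphism $\pi_n(L_{\degg(S)} X_n) \simeq G$ induced by the fiber inclusion.

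The main obstacle will be naturality. Applying $\eta$ levelwise gives a commuting map of fiber sequences, and taking $\pi_n$ produces a commuting square whose left vertical map is the algebraic localization $\pi_n X \to G$ furnished by \cref{theorem:localizationKgn} (as exploited in \cref{lemma:localizationlocalizesfirst}), whose horizontal maps are the isomorphisms $\pi_n K(\pi_n X, n) \simeq \pi_n X_n$ and $\pi_n L_{\degg(S)} K(\pi_n X, n) \simeq \pi_n L_{\degg(S)} X_n$ from the previous paragraph, and whose right vertical map is induced by $\eta$. Chasing the square identifies this right vertical map with the algebraic localization $\pi_n X \to G$, and composing with the reduction from the first paragraph yields the desired conclusion. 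The delicate point is checking that the isomorphism produced by the long exact sequence is the one induced by the fiber inclusion, so that the naturality square really identifies the map $\pi_n X_n \to \pi_n L_{\degg(S)} X_n$ with the algebraic localization rather than only establishing an abstract isomorphism.
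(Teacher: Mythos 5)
Your proposal is correct and follows essentially the same route as the paper's proof: localize the Postnikov fiber sequences $K(\pi_n X, n) \hookrightarrow \trunc{n}{X} \to \trunc{n-1}{X}$ using \cref{lemma:lex}, identify the localized fiber via \cref{theorem:localizationKgn}, and use the commutativity of truncation and localization (\cref{corollary:localizationandtruncationcommute}) to transfer the conclusion back to $X$. The extra care you take with the long exact sequence and the naturality square simply spells out the step the paper compresses into ``the result now follows from \cref{theorem:localizationKgn},'' and your ``induction'' is in fact not needed as an induction since the truncatedness of the localized base comes from \cref{lemma:locofscntrunc} rather than from an inductive hypothesis.
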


\begin{proof}
    Since $\degg(S)$-localization preserves the property of being simply connected, it is immediate that $\pi_1(L_{\degg(S)} X)$ is trivial.
    Now fix $n \geq 1$ and consider the fiber sequence
    \[
        K(\pi_{n+1}(X),n+1) \longhookrightarrow \ttrunc{n+1}{X} \lra \ttrunc{n}{X}.
    \]
    Notice that all of the types in the fiber sequence are simply connected.
    Applying $\degg(S)$-localization we obtain a map of fiber sequences
    \[
        \begin{tikzpicture}
          \matrix (m) [matrix of math nodes,row sep=2em,column sep=3em,minimum width=2em]
            { K(\pi_{n+1}(X),n+1) \strut & \ttrunc{n+1}{X} \strut & \ttrunc{n}{X} \strut \\
            L_{\degg(S)} K(\pi_{n+1}(X),n+1) &  \ttrunc{n+1}{L_{\degg(S)}X} & \ttrunc{n}{L_{\degg(S)} X}\\};
          \path[->]
            (m-1-1) edge [right hook->] node [right] {} (m-1-2)
                    edge node [right] {} (m-2-1)
            (m-1-2) edge node [right] {} (m-1-3)
                    edge node [right] {} (m-2-2)
            (m-2-2) edge node [right] {} (m-2-3)
            (m-2-1) edge [right hook->] node [right] {} (m-2-2)
            (m-1-3) edge node [right] {} (m-2-3)
            ;
        \end{tikzpicture}
    \]
    by \cref{lemma:lex} combined with the commutativity of truncation and
    $\degg(S)$-localization (\cref{corollary:localizationandtruncationcommute}).
    The result now follows from \cref{theorem:localizationKgn}.
\end{proof}

We also have a partial converse to \cref{theorem:localizationlocalizes}.

\begin{thm}\label{theorem:characterize-localization}
  Let $X$ and $X'$ be pointed, simply connected $n$-types, for some $n \geq 0$.
  If $f : X \to X'$ is a pointed map such that $\pi_m(f) : \pi_m(X) \to \pi_m(X')$ is
  an algebraic localization away from $S$ for each $m > 1$,
  then $f$ is a $\degg(S)$-localization of $X$.
\end{thm}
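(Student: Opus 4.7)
The plan is to factor $f$ through the $\degg(S)$-localization of $X$ and show that the resulting comparison map is an equivalence via the truncated Whitehead theorem. First I would verify that $X'$ is itself $\degg(S)$-local: since each $\pi_m(X')$ for $m > 1$ is an algebraic localization of $\pi_m(X)$ away from $S$, it is uniquely $S$-divisible, and since $X'$ is simply connected, $\pi_1(X') = 0$ is trivially uniquely $S$-divisible. Because $X'$ is assumed to be a simply connected $n$-type, the converse direction of \cref{prop:homotopygroupsoflocalarelocal} applies and yields that $X'$ is $\degg(S)$-local.

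Next, by the universal property of $\degg(S)$-localization, the map $f : X \to X'$ factors (uniquely up to homotopy) as
\[
X \xrightarrow{\,\eta\,} L_{\degg(S)} X \xrightarrow{\,g\,} X'.
\]
It suffices to show $g$ is an equivalence, for then $f = g \circ \eta$ is a $\degg(S)$-localization. Note that $L_{\degg(S)} X$ is simply connected by \cref{corollary:plocalizationpreservesconnectedness} and is an $n$-type by \cref{lemma:locofscntrunc}. So $g$ is a pointed map between pointed, simply connected $n$-types.

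To see $g$ induces isomorphisms on all homotopy groups, observe that $\pi_0$ and $\pi_1$ are trivial on both sides. For $m \geq 2$, \cref{theorem:localizationlocalizes} gives that $\pi_m(\eta)$ is an algebraic localization of $\pi_m(X)$ away from $S$, and by hypothesis $\pi_m(f) = \pi_m(g) \circ \pi_m(\eta)$ is another such algebraic localization. By the uniqueness clause of the universal property in \cref{def:localizationofgroups} (applied in the uniquely $S$-divisible group $\pi_m(X')$), $\pi_m(g)$ must be an isomorphism.

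Finally, since $L_{\degg(S)} X$ and $X'$ are both $0$-connected $n$-truncated types and $g$ induces an isomorphism on $\pi_m$ for all $m \geq 1$, the truncated Whitehead theorem~\cite[Theorem~8.8.3]{hottbook} implies that $g$ is an equivalence. The only subtle step is confirming that two algebraic localizations of $\pi_m(X)$ away from $S$ into the same uniquely $S$-divisible group must agree as homomorphisms up to a unique isomorphism, but this is immediate from the universal property in \cref{def:localizationofgroups}. Hence $f$ is a $\degg(S)$-localization of $X$.
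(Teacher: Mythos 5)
Your proposal is correct and follows essentially the same route as the paper's own proof: establish that $X'$ is $\degg(S)$-local via \cref{prop:homotopygroupsoflocalarelocal}, factor $f$ through $\eta : X \to L_{\degg(S)} X$, use \cref{theorem:localizationlocalizes} plus uniqueness of algebraic localizations to see the comparison map induces isomorphisms on homotopy groups, and conclude with the truncated Whitehead theorem. The extra details you supply (citing \cref{corollary:plocalizationpreservesconnectedness} and \cref{lemma:locofscntrunc} to verify $L_{\degg(S)} X$ is a simply connected $n$-type) are correct and only make explicit what the paper leaves implicit.
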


\begin{proof}
  By \cref{prop:homotopygroupsoflocalarelocal}, $X'$ is $\degg(S)$-local.
  Therefore, we have a commuting triangle
  \[
    \begin{tikzcd}
      X \arrow[r,"f"] \arrow[d,swap,"\eta"] & X' \\
      L_{\degg(S)} X \arrow[ur,dashed,swap,"f'"]
    \end{tikzcd}
  \]
  and it suffices to show that $f'$ is an equivalence.
  Since $L_{\degg(S)} X$ is also a pointed, simply connected $n$-type,
  if we show that $\pi_m(f')$ is a bijection for each $m > 1$,
  the truncated Whitehead theorem~\cite[Theorem~8.8.3]{hottbook} implies
  that $f'$ is an equivalence.
  By assumption, $\pi_m(f)$ is an algebraic localization of $\pi_m(X)$.
  By \cref{theorem:localizationlocalizes}, the same is true of $\pi_m(\eta)$.
  Since $\pi_m(f') \circ \pi_m(\eta) = \pi_m(f)$, it follows that
  $\pi_m(f')$ is a bijection.
\end{proof}

\subsection{Algebraic localization of abelian groups}\label{ss:abelian}

As observed in \cref{rmk:localizationofgroups},
\cref{theorem:localizationKgn} implies that the localization of an abelian group
away from a set $S$, in the category of groups, coincides with its localization in
the category of abelian groups. It moreover provides a construction of the localization.
Since \cref{theorem:localizationKgn}
has an indirect, homotopical proof, in this section we give a short, independent, algebraic proof of these results.

The following elementary lemma is the key ingredient, and its proof is very similar to the
proof of~\cite[Lemma~5.4.5]{MayPonto}.
 
\begin{lem}
    Let $H$ be a group, let $n,m : \N$, and let $x$, $y$, $\hat{x}$ and $\hat{y}$ be
    elements of $H$ with the property that
    $\hat{x}$ is the unique element of $H$ such that $\hat{x}^n = x$ and
    $\hat{y}$ is the unique element of $H$ such that $\hat{y}^m = y$.
    If $x$ and $y$ commute, then $\hat{x}$ and $\hat{y}$ commute.
\end{lem}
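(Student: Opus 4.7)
The plan is a classical two-step conjugation argument, exploiting uniqueness of roots at each stage. The key observation is that conjugation by any element is a group automorphism, so it takes $n$-th roots to $n$-th roots.

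First I would show that $\hat{x}$ commutes with $y$. Since conjugation by $y$ is a homomorphism, $(y \hat{x} y^{-1})^n = y \hat{x}^n y^{-1} = y x y^{-1}$. Because $x$ and $y$ commute, $y x y^{-1} = x$, so $y \hat{x} y^{-1}$ is an $n$-th root of $x$. By the uniqueness hypothesis on $\hat{x}$, we conclude $y \hat{x} y^{-1} = \hat{x}$, i.e., $y \hat{x} = \hat{x} y$.

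Next I would upgrade this to show $\hat{x}$ and $\hat{y}$ commute, by the symmetric argument applied with $\hat{x}$ in place of $y$: conjugation by $\hat{x}$ gives $(\hat{x}\hat{y}\hat{x}^{-1})^m = \hat{x}\hat{y}^m\hat{x}^{-1} = \hat{x} y \hat{x}^{-1}$, and the previous step tells us $\hat{x} y \hat{x}^{-1} = y$. Thus $\hat{x} \hat{y} \hat{x}^{-1}$ is an $m$-th root of $y$, and by uniqueness of $\hat{y}$ we obtain $\hat{x} \hat{y} \hat{x}^{-1} = \hat{y}$, as desired.

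There is no real obstacle here; the proof is entirely formal once one notices that the uniqueness of roots is used twice. The only small subtlety is that the two applications of uniqueness are not symmetric in the hypotheses: the first uses that $x$ commutes with $y$ (which is given), while the second uses that $y$ commutes with $\hat{x}$ (which is precisely the conclusion of the first step). So the ordering of the two steps matters, but neither requires $x$ or $y$ to be central or any additional structure on $H$.
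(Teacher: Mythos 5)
Your proof is correct and is essentially the paper's own argument: two applications of uniqueness of roots via conjugation, the only difference being that you first show $\hat{x}$ commutes with $y$ and then conjugate $\hat{y}$ by $\hat{x}$, whereas the paper first shows $\hat{y}$ commutes with $x$ and then conjugates $\hat{x}$ by $\hat{y}$ --- a trivial relabeling by the $x\leftrightarrow y$ symmetry.
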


\begin{proof}
    We first show that $x$ and $\hat{y}$ commute.
    The $m$-th power of $x \hat{y} x^{-1}$ is $x \hat{y}^m x^{-1} = x y x^{-1} = y$,
    so by the uniqueness of $\hat{y}$, it must be the case that $x \hat{y} x^{-1} = \hat{y}$.

    Similarly, we have that
    the $n$-th power of $\hat{y} \hat{x} \hat{y}^{-1}$ is
    $\hat{y} \hat{x}^n \hat{y}^{-1} = \hat{y} x \hat{y}^{-1} = x$,
    so by the uniqueness of $\hat{x}$, we have $\hat{y} \hat{x} \hat{y}^{-1} = \hat{x}$.
\end{proof}

\begin{prop}
    Let $G$ be an abelian group, let $n : \N$, and let $n : G \to G$ be the multiplication by $n$ map.
    Then, for every uniquely $n$-divisible group $H$, the precomposition map
    \[
        n^* : \Hom(G,H) \lra \Hom(G,H)
    \]
    is an equivalence.
\end{prop}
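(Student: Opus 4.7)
The plan is to exhibit an inverse of $n^\ast$ directly. Given $\phi\in\Hom(G,H)$, I define $\psi:G\to H$ pointwise by letting $\psi(x)$ be the unique element of $H$ whose $n$-th power is $\phi(x)$; this is well-defined by the assumption that $H$ is uniquely $n$-divisible.

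The main obstacle is verifying that $\psi$ is a group homomorphism, which is nontrivial because $H$ is not assumed abelian. This is exactly where the preceding lemma is needed. For $x,y:G$, I have $\phi(x+y)=\phi(x)\phi(y)$, and by uniqueness of $n$-th roots it suffices to show $(\psi(x)\psi(y))^n=\phi(x)\phi(y)$. Since $G$ is abelian, the elements $\phi(x)$ and $\phi(y)$ commute in $H$, and each is the unique $n$-th power of $\psi(x)$ and $\psi(y)$ respectively, so the lemma (applied with $m=n$) implies that $\psi(x)$ and $\psi(y)$ commute. Therefore
\[
  (\psi(x)\psi(y))^n = \psi(x)^n\psi(y)^n = \phi(x)\phi(y),
\]
and uniqueness of $n$-th roots yields $\psi(x+y)=\psi(x)\psi(y)$, as required.

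It then remains to check the two inversion identities, which are straightforward. For any $\phi$ and its associated $\psi$, I have $(n^\ast\psi)(x) = \psi(nx) = \psi(x)^n = \phi(x)$, so $n^\ast\psi=\phi$. Conversely, if $n^\ast\psi_1 = n^\ast\psi_2$, then $\psi_1(x)^n = \psi_1(nx) = \psi_2(nx) = \psi_2(x)^n$ for every $x:G$, and unique $n$-divisibility of $H$ gives $\psi_1=\psi_2$. This establishes that $n^\ast$ is a bijection.
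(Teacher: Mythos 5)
Your proof is correct and follows essentially the same route as the paper: both define the candidate preimage pointwise via the unique $n$-th root function on $H$, invoke the preceding lemma on commuting roots to verify it is a homomorphism (the only nontrivial point, since $H$ need not be abelian), and then conclude that $n^*$ is a bijection. The only difference is presentational — you phrase the conclusion as surjectivity plus injectivity, while the paper phrases it as unique existence of the lift — which amounts to the same thing for maps of sets.
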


\begin{proof}
    Given a map $f : G \to H$, we have to show that there exists a unique $\hat{f} : G \to H$ such that
    $\hat{f} \circ n = f$.
    Since $H$ is uniquely $n$-divisible, we have an inverse function $\phi : H \to H$ to the $n$-th power map $n : H \to H$.
    It follows that if $\hat{f}$ exists, then $\hat{f} = \phi \circ f$.
    So we only have to check that $\hat{f}$ is a group homomorphism.

    It is clear that $\hat{f}$ preserves the identity element, so it remains to show that it preserves
    the group operation. Take two elements $a,b : G$, and consider $f(a),f(b) : H$. These are two commuting
    elements that have unique $n$-th roots.
    So their $n$-th roots $\hat{f}(a) = \phi(f(a))$ and $\hat{f}(b) = \phi(f(b))$ must also
    commute. This implies that
    \[
        (\hat{f}(a) \cdot \hat{f}(b))^n = \hat{f}(a)^n \cdot \hat{f}(b)^n = f(a) \cdot f(b) = f(a + b) .
    \]
    So, by the uniqueness of $n$-th roots, $\hat{f}(a) \cdot \hat{f}(b) = \phi(f(a + b)) = \hat{f}(a+b)$.
\end{proof}
\enlargethispage{10pt}

Using this result, given an abelian group $G$,
it is straightforward to prove that the colimit of the sequence displayed in
\cref{theorem:localizationKgn} is a localization of $G$ away from the family $S$
in the category of groups.
Moreover, this colimit is abelian, so it is also the localization of $G$
in the category of abelian groups.

\printbibliography

\end{document}